\documentclass[12pt]{amsart}

\usepackage[margin=1in]{geometry}

\usepackage{amssymb,amsmath,amsthm,mathrsfs,enumerate,multicol,esint}
\usepackage{color}
\usepackage{makecell}
\usepackage{tikz}
\usetikzlibrary{arrows,calc}

\allowdisplaybreaks

\title[Hermite pseudo-multipliers and molecules]{Pseudo-multipliers and smooth molecules on\\ Hermite Besov and Hermite Triebel--Lizorkin spaces}

\author{Fu Ken Ly}
\address{School of Mathematics and Statistics, The Faculty of Science, \& The Learning Hub (Mathematics), Education Portfolio, The University of Sydney,
NSW 2006, Australia.}
\email{ken.ly@sydney.edu.au}

\author{Virginia Naibo}
\address{Department of Mathematics, Kansas State University.
138 Cardwell Hall, 1228 N. 17th Street, Manhattan, KS  66506, USA.}
\email{vnaibo@ksu.edu}

\thanks{The second author was  partially supported by the NSF under grant DMS 1500381 and the Simons Foundation under grant 705953.}
\subjclass[2010]{42B35, 42C15, 35S05, 33C45}
\keywords{Hermite operator, Besov and Triebel--Lizorkin spaces, molecules, pseudo-multipliers}



\newcommand{\RR}{\mathbb{R}} 
\newcommand{\NN}{\mathbb{N}} 
\newcommand{\ZZ}{\mathbb{Z}} 
\newcommand{\sz}{\mathscr{S}} 
\newcommand{\MM}{\mathcal{M}}
\newcommand{\AM}{\mathcal{A}}
\newcommand{\N}{\mathcal{N}}
\newcommand{\K}{\mathcal{K}}

\newcommand{\f}{\frac}

\newcommand{\LL}{\mathcal{L}}
\newcommand{\PP}{\mathbb{P}} 
\newcommand{\QQ}{\mathbb{Q}} 
\newcommand{\ip}[1]{\langle #1 \rangle} 
\newcommand{\E}{\mathcal{E}} 
\newcommand{\X}{\mathcal{X}} 
\newcommand{\Ind}{\mathbf{1}} 
\newcommand{\Q}{\mathcal{Q}}
\newcommand{\B}{\mathcal{B}}
\newcommand{\wt}[1]{\widetilde{#1}} 
\newcommand{\diff}{\triangle}
\newcommand{\ve}{\varepsilon}
\newcommand{\vp}{\varphi}
\newcommand{\e}{\mathrm{e}}
\newcommand{\CN}{\mathcal{C}} 
\newcommand{\SC}{S} 
\newcommand{\SM}{\mathcal{S}} 
\newcommand{\floor}[1]{\lfloor #1 \rfloor}
\newcommand{\ceil}[1]{\lceil #1\rceil}
\newcommand{\A}[1]{A^{(#1)}}
\newcommand{\vph}{\varphi}
\newcommand{\cro}{\varrho} 
\newcommand{\dd}{\delta} 
\newcommand{\rr}{\rho} 

\newcommand{\kk}{k} 
\newcommand{\xs}{k} 
\newcommand{\vk}{\varkappa} 

\DeclareMathOperator{\supp}{supp\,} 

\theoremstyle{plain}
\newtheorem{Theorem}{Theorem}[section]
\newtheorem{Lemma}[Theorem]{Lemma}

\newtheorem{Corollary}[Theorem]{Corollary}

\newtheorem{Definition}[Theorem]{Definition}
\theoremstyle{definition}
\newtheorem{Remark}[Theorem]{Remark}
\theoremstyle{remark}
\newtheorem{Example}[Theorem]{Example}
\theoremstyle{example}

\numberwithin{equation}{section}

\def\barint{\kern4pt
\raise3.4pt\hbox{\vrule height.8pt width5pt}%
\kern-9pt 
\int}


\newcommand{\aver}[1]{-\hskip-0.46cm\int_{#1}}

\def\XXint#1#2#3{{\setbox0=\hbox{$#1{#2#3}{\int}$}
     \vcenter{\hbox{$#2#3$}}\kern-.5\wd0}}

\begin{document}


\begin{abstract}
We obtain new molecular decompositions and molecular synthesis estimates for Hermite Besov  and Hermite Triebel--Lizorkin spaces and  use such tools to prove boundedness properties of Hermite pseudo-multipliers on those spaces. The notion of molecule we develop leads to boundedness of pseudo-multipliers associated to symbols of H\"ormander-type adapted to the Hermite setting on spaces for which the  smoothness allowed  includes non-positive values; in particular, we obtain continuity results for such operators on Lebesgue and Hermite local Hardy spaces. As a byproduct of  our results on boundedness properties of pseudo-multipliers, we show that Hermite Besov spaces and Hermite Triebel--Lizorkin spaces are closed under non-linearities. 
\end{abstract}

\maketitle

\section{Introduction}\label{sec: intro}

This article contributes  new results to the theory of function spaces and corresponding boundedness properties of  pseudo-multipliers associated to the Hermite operator.  This operator is defined as $$ \LL=-\Delta+|x|^2,\quad x\in \RR^n,\,n\ge 1, $$ where $\Delta=\sum_{i=1}^n \f{\partial^2}{\partial x_i^2}$ is the Laplacian operator.  For a symbol $\sigma: \RR^n\times \NN_0\to\mathbb{C},$ the Hermite pseudo-multiplier $T_\sigma$ is given by
\begin{align*}
T_\sigma f(x)=\sum_{\kk \in \NN_0}\sigma(x,\lambda_{\kk}) \PP_\kk f(x),\quad x\in \RR^n,
\end{align*}
where $\lambda_\kk=2\kk+n$ and $\PP_\kk$ are orthogonal projectors onto spaces spanned by  Hermite functions (see Section \ref{sec: prelim}). When $\sigma$ is independent of $x,$  $T_\sigma$ is called a Hermite multiplier and can be expressed in the form
$$
T_\sigma(f)=\mathcal{F}^{-1}_\LL(\sigma \mathcal{F}_\LL(f)),
$$
where $\mathcal{F}_\LL$ is the Fourier--Hermite transform. These operators are counterparts of the well-known pseudo-differential operators and Fourier multiplier operators defined in the Euclidean setting in terms of the Fourier transform. 

In this paper, we obtain new molecular decompositions (Theorem~\ref{th:decomp}) and molecular synthesis estimates (Theorem~\ref{th:synth}) for the Hermite Besov  spaces, denoted $ B^{p,q}_{\alpha}(\LL),$ and the Hermite Triebel--Lizorkin spaces, denoted  $F^{p,q}_{\alpha}(\LL),$ and  use such tools to prove new results on boundedness of Hermite pseudo-multipliers on those spaces (Theorems~\ref{th: main1} and \ref{th: main2} and Corollary~\ref{coro:main}).

The works \cite{MR600625, MR1008442, MR1215939} pioneered the study of boundedness properties of Hermite multipliers on Lebesgue spaces. For instance, a H\"ormander-Mikhlin type multiplier theorem for Hermite expansions  was proved in  \cite{MR1008442}; more precisely,  if $\sigma$ satisfies 
$$ 
 |\diff^\kappa \sigma(\xs)| \lesssim  \xs^{-\kappa}\quad \forall \xs\in  \NN_0
$$
for $\kappa=0,\dots, \floor{\textstyle{\frac{n}{2}}}+1,$ then $T_\sigma$ is bounded on $L^p(\RR)$ for $1<p<\infty.$ The symbol $\diff$ denotes the forward difference operator, that is, for a function $f$ defined over the integers,  $\diff f(\kk)=f(\kk+1)-f(\kk)$ and $\diff^\kappa f(\kk)=\diff(\diff^{\kappa-1} f)(\kk)$ for $\kappa\ge 2$.

The first  results on boundedness properties of Hermite pseudo-multipliers  in Lebesgue spaces appeared in  \cite{MR1343690} and are  restricted to dimension one. This work was followed by investigations in \cite{MR3280055} where it is proved that if  $T_\sigma$ is bounded on $L^2(\RR^n)$ and 
\begin{align}\label{eq: zerotwozero}
|\partial_x^\nu\diff^\kappa_\xs\sigma(x,\xs)|\lesssim (1+\xs)^{-\kappa}\quad \forall \;(x,\xs)\in \RR^n\times \NN_0
\end{align}
for $\kappa=0,1,\dots,n+1$ and $\nu\in \NN_0^n$ such that $0\le|\nu|\le 1,$  then $T_\sigma$  is bounded on  $L^p(\RR^n)$ for $1<p<\infty.$ The theorems in  \cite{MR3280055,MR1343690} on boundedness of Hermite pseudo-multipliers on $L^p(\RR^n)$ for some range of $1<p<\infty$ assume the boundedness of the operator  on $L^2(\RR^n).$ For Hermite multipliers, boundedness on $L^2(\RR^n)$ is equivalent to the symbol being a bounded function on $\NN$ by Parseval's identity for Hermite expansions; however, the situation is more complicated in the case of Hermite pseudo-multipliers.  This prompted   the authors in \cite{MR3280055} to pose the question about what conditions on the symbol of a Hermite pseudo-multiplier  imply its boundedness on $L^2(\RR^n).$ Corollaries \ref{cor:Lp1} and \ref{cor:Lp2} of our results give an answer to this question; 
for instance, we obtain that $T_\sigma$ is bounded on $L^p(\RR^n)$ for $1<p<\infty$ if $\sigma$ satisfies \eqref{eq: zerotwozero} for $\kappa=0,\dots, n+1$ and $\nu\in \NN_0^n$ such that $0\le |\nu|\le 2\ceil{\frac{n+1}{2}}$ or if $\sigma$  satisfies a certain cancellation condition along with the weaker estimate 
\begin{align*}
|\partial_x^\nu\diff^\kappa_\xs\sigma(x,\xs)|\lesssim (1+\sqrt{\xs})^{-2\kappa+|\nu|}\quad \forall \;(x,\xs)\in \RR^n\times \NN_0
\end{align*}
for $\kappa=0,1,\dots,n+1$ and  $\nu\in \NN_0^n$ such that $0\le|\nu|\le 1.$ Other sufficient conditions for boundedness on $L^p(\RR^n)$ for $1<p<\infty$ of Hermite pseudo-multipliers, as well as a comparison of our  corresponding  results with those in the literature,  are presented in Section~\ref{sec:lebesgue}.  
  For the range $0<p\le 1,$ we  obtain results in the context of Hermite local Hardy spaces, which are better suited  than Lebesgue  or Hardy spaces for the study of boundedness properties of pseudo-multipliers (see Section~\ref{sec:hardy}).

Continuity properties of pseudo-differential operators have been extensively studied in various function spaces that include the scale of the classical Besov and Triebel--Lizorkin spaces associated to the Laplacian operator (see for instance, \cite{MR946282, MR1704146, MR639462, MR3759556, MR812311, MR1232192, MR1077277}). While there are some recent works on the study of boundedness properties of pseudo-differential operators in spaces associated to other non-negative self-adjoint operators (see \cite{Bui}  and \cite{MR3622656} addressed below), such investigations lead to results only in the context of spaces with positive smoothness. Our article seems to be the first one treating boundedness properties of Hermite pseudo-multipliers in the scales of the  Hermite Besov spaces $B^{p,q}_{\alpha}(\LL)$ and the Hermite Triebel--Lizorkin spaces $F^{p,q}_{\alpha}(\LL)$ that allow for non-positive values of the smoothness parameter $\alpha$  and for the whole range $(0,\infty)$ for the parameters $p,q$ (in particular, $\alpha=0$ and $q=2$ give the scale of Lebesgue and  Hermite local Hardy spaces). The spaces $B^{p,q}_{\alpha}(\LL)$ and $F^{p,q}_{\alpha}(\LL)$  are defined in terms of Littlewood-Paley operators associated to the Hermite operator.  A $\varphi$-transform characterization for them, in the spirit of the fundamental works \cite{MR808825, MR1070037} for the Euclidean setting,  was obtained in \cite{MR2399106}. Among the main contributions in this article, we introduce  new concepts of molecules for  Hermite Besov spaces and  Hermite Triebel--Lizorkin spaces; we then prove new corresponding almost orthogonality estimates, molecular decompositions and molecular synthesis estimates (see Section~\ref{sec: molecules}). The latter constitutes a crucial tool for the proofs of the boundedness properties of the Hermite pseudo-multipliers here studied, which require the use of both a $\varphi$-transform characterization as well as molecular synthesis estimates for the spaces. 

The classical molecules in \cite{MR808825, MR1070037} satisfy the following  cancellation condition
$$ \int_{\RR^n} y^\gamma m(y)\,dy=0 \qquad  \forall \gamma \in \NN_0\text{ such that } |\gamma|\le M,$$
for a suitable integer $M$. On the other hand, the (almost) cancellation condition imposed to  the molecules we define in the Hermite context is expressed in terms of estimates of the type
$$ \Big|\int_{\RR^n} (y-x)^\gamma m(y)\,dy\Big|\le |B(x,r)|^{\f{1}{2}}\cro(x)^{|\gamma|} \Big(\f{r}{\cro(x)}\Big)^{M+1} \qquad   \forall \;\gamma\in \NN_0 \text{ such that } |\gamma|\le M,$$
where $\cro(x)=\f{1}{1+|x|},$ $x\in\RR^n$ and $r\le \cro(x)$ (see Definition \ref{def: molecules}). This condition is essential in our proofs of boundedness results for Hemite pseudo-multipliers  that allow for the smoothness index to be zero or negative.

Our results on Hermite Besov and Hermite Triebel--Lizorkin spaces complement several articles that have extended the 
 work in \cite{MR808825, MR1070037} for the classical  Besov and Triebel--Lizorkin spaces to a variety of other settings associated to non-negative self-adjoint operators. For instance, the authors of \cite{MR3319540} develop molecular decompositions and molecular synthesis estimates for scales of Besov and Triebel--Lizorkin spaces associated to the Hermite operator that are defined in terms of heat kernels via square functions. We note that the  spaces $B^{p,q}_{\alpha}(\LL)$ and $F^{p,q}_{\alpha}(\LL)$  and the concept of molecules introduced in this article are different to those treated in \cite{MR3319540}; in particular, the notion of  molecules we present is new in this context and closer in spirit to the  molecules in \cite{MR808825, MR1070037} as described above.  
 The authors of \cite{MR3271256} present  frame decompositions for Besov and Triebel--Lizorkin spaces in the context of a Dirichlet space with a doubling measure and local scale-invariant Poincar\'e inequality; the authors of \cite{MR3601596,GKKP2018} study discrete frame decompositions and atomic and molecular decompositions for homogeneous Besov and Triebel--Lizorkin spaces in the setting of a doubling metric measure space in the presence of a non-negative self-adjoint operator whose heat kernel has Gaussian localization and the Markov property; the authors of \cite{MR4070730} obtain atomic decompositions for weighted Besov and Triebel--Lizorkin spaces in the context of spaces of homogeneous type with a non-negative self-adjoint operator satisfying Gaussian upper bounds on its heat kernels. 

The classes of symbols we consider  are reminiscent of the well-known H\"ormander classes corresponding to the Euclidean setting; they include symbols that satisfy estimates of the type
\begin{align}\label{eq: symbols}
	| \partial^\nu_x \diff^\kappa_\xs \sigma(x,\xs)| \lesssim g(x,\xs)(1+\sqrt{\xs})^{m-2\kappa+\delta |\nu|}  \qquad \forall (x,\xs)\in \RR^n\times\NN_0
\end{align}
for $0\le |\nu|\le \N$ and $0\le \kappa\le \K,$ where $\N,\K\in \NN_0,$ $m\in \RR,$  $0\le \delta\le 1,$ and  $g$ is a function that admits some exponential growth. 
Symbols in the spirit of \eqref{eq: symbols} have been considered (albeit in the absence of the growth function $g$) in   \cite{Bui} and  \cite{MR3622656}  for  pseudo-multipliers associated  with the Hermite operator and for pseudo-differential operators associated with non-negative self-adjoint operators, respectively. These works prove boundedness of such operators in corresponding Besov and Triebel--Lizorkin spaces, but only with positive smoothness and indices $1<p<\infty$ and $1<q<\infty$. They also do not address the endpoint $\dd=1$ (which is analogous to the so-called ``forbidden class'' in the classical setting). 
In contrast, we are able to show that condition \eqref{eq: symbols} alone with $0\le \delta<1$, or condition \eqref{eq: symbols}  with $\delta=1$ along with newly introduced cancellation conditions on $\sigma$ (see Definition \ref{def: canc})  lead to boundedness properties of $T_\sigma$ from $B^{p,q}_{\alpha+m}(\LL)$ to  $B^{p,q}_{\alpha}(\LL)$ and  from $F^{p,q}_{\alpha+m}(\LL)$ to $F^{p,q}_{\alpha}(\LL)$ for $0<p<\infty,$ $0<q<\infty$ and  ranges of the smoothness parameter $\alpha$ that allow for negative values (see Section~\ref{sec: PDOs} for more results and details). It is also worth mentioning that while the smoothness and decay conditions that we impose on the symbols in \eqref{eq: symbols} have some  similarities with those  in \cite{Bui,MR3622656}, our symbols  admit some further exponential growth as described above. Moreover, the techniques used in  \cite{Bui} and  \cite{MR3622656} are different from ours and rely on decompositions of the symbols in elementary pieces.

 Inspired by the works \cite{MR631751, MR639462},  our results on boundedness properties of pseudo-multipliers also imply that Hermite Besov spaces and Hermite Triebel--Lizorkin spaces are closed under non-linearities. More precisely, we prove that if $f$ is a real-valued function in  $B^\alpha_{p,q}(\LL)\cap L^\infty(\RR^n)$  (or, $F^\alpha_{p,q}(\LL)\cap L^\infty(\RR^n)$), then $H(f)\in B^\alpha_{p,q}(\LL)\cap L^\infty(\RR^n)$ (respectively, $H(f)\in F^\alpha_{p,q}(\LL)\cap L^\infty(\RR^n)$), where $H$ is an infinitely differentiable function defined on $\RR$ that satisfies $H(0)=0.$  In particular, the spaces $ B^\alpha_{p,q}(\LL)\cap L^\infty(\RR^n)$ and $ F^\alpha_{p,q}(\LL)\cap L^\infty(\RR^n)$ turn out to be closed under pointwise multiplication. We refer the reader to Section~\ref{sec:  meyer} for more details and conditions on the parameters.

The organization of the article is as follows. In Section~\ref{sec: prelim}, we introduce notation and present background material related to Hermite functions, Hermite tiles and Besov and Triebel--Lizorkin spaces in the Hermite setting. In Section~\ref{sec: molecules}, we introduce a novel notion of molecules associated to such spaces  and prove new  results concerning almost orthogonality (Lemma~\ref{lem: AO}), molecular decomposition and synthesis (Theorems~\ref{th:decomp} and \ref{th:synth}). In Section~\ref{sec: PDOs}, we pursue the study of boundedness properties  in Hermite Besov and Hermite Triebel--Lizorkin spaces for pseudo-multipliers with symbols in H\"ormander-type classes adapted to the Hermite setting (Theorems~\ref{th: Tsmooth}, \ref{th: Tcanc}, \ref{th: main1} and \ref{th: main2} and Corollary~\ref{coro:main}). In Section~\ref{sec: applications}, we present examples and applications of the theorems proved in Section~\ref{sec: PDOs}; in particular, we discuss boundedness results for Hermite pseudo-multipliers in  Lebesgue and Hermite local Hardy spaces (Corollaries~\ref{cor:Lp1}, \ref{cor:Lp2}, \ref{cor:wLp} and \ref{cor:hardy}) and show that Hermite Besov spaces and Hermite Triebel--Lizorkin spaces are closed under non-linearities (Theorem~\ref{thm: appnonlin} and Corollary~\ref{coro: appnonlin}). Finally, the appendices contain the proofs of some technical lemmas.

\medskip

\textbf{Notation:}
We set
$\NN_0=\NN\cup\{0\}$ and
 $|x|_\infty = \max_{1\le i\le n} |x_i|$. 
The notation $Q(x_Q,r_Q)=\{x\in\RR^n: |x-x_Q|_\infty<r_Q\}$ denotes the cube centered at  $x_Q\in \RR^n$ of side-length $2r_Q$. The smallest integer greater than $\alpha\in\RR$ is denoted by $\ceil{\alpha}$, while the largest integer not exceeding $\alpha$ is denoted $\floor{\alpha}$.  We also set $\alpha_+ = \max\{0,\alpha\}$ and $\alpha^*=\alpha-\floor{\alpha}$. Given real numbers $a$ and $b,$ we set  $a\lor b=\max\{a,b\}$ and $a\land b=\min\{a,b\}.$

We make use of  the following multi-index conventions. If $\alpha,\beta \in \NN_0^n$ are multi-indices, then $\alpha\le\beta$ means $\alpha_i\le\beta_i$ for each $i\in\{1,\dots,n\}$, $\binom{\beta}{\alpha} = \binom{\beta_1}{\alpha_1}\binom{\beta_2}{\alpha_2}\dots\binom{\beta_n}{\alpha_n}$, $\alpha != \alpha_1 !\alpha_2!\dots \alpha_n!$,  $x^\alpha = x_1^{\alpha_1}x_2^{\alpha_2}\dots x_n^{\alpha_n}$, and $\partial^\alpha = \partial_1^{\alpha_1}\partial_2^{\alpha_2}\dots \partial_n^{\alpha_n}$.

For $i\in\{1,\dots,n\}$ we set $\A{x}_i=-\f{\partial}{\partial x_i}+x_i$. When the variable under consideration is clear we just write $A_i$. If $\alpha\in\NN_0^n,$ we define  $A^\alpha=A_1^{\alpha_1}A_2^{\alpha_2}\dots A_n^{\alpha_n}$. 

We denote by $\sz(\RR^n)$ the space of Schwartz functions on $\RR^n$ and by $\sz'(\RR^n)$ the space of tempered distributions on $\RR^n$. The letter $n$ will always mean Euclidean dimension.

For a locally integrable function $f$ and measureable set $E\subset \RR^n$  also use the notation  
$\displaystyle \aver{E} f=\f{1}{|E|}\int f$
to denote the average  of $f$ over $E$.

\medskip

{\bf{Acknowledgements:}} The authors thank Lesley Ward  and the anonymous referees for their valuable input and suggestions. The first author thanks The Anh Bui for useful discussions.

\section{Preliminaries}\label{sec: prelim}
In this section  we introduce notation and present background material related to Hermite functions, Hermite tiles, and Besov and Triebel--Lizorkin spaces in the Hermite setting.

For each $k\in\NN_0$, the Hermite function of degree $\kk$ is 
\begin{align*}
	h_k(t)=(2^k k! \sqrt{\pi})^{-1/2}H_k(t)e^{-t^2/2} \qquad \forall\;t\in\RR{},
\end{align*}
where 
\begin{align*}
	H_k(t)=(-1)^k e^{t^2}\partial_t^k(e^{-t^2})
\end{align*}
is the $k$-th Hermite polynomial. 

The $n$-dimensional Hermite functions $h_\xi$ are defined over the multi-indices $\xi\in \NN_0^n$ by 
\begin{align*}
	h_\xi(x)=\prod_{j=1}^n h_{\xi_j}(x_j)\qquad \forall x\in\RR^n.
\end{align*}
The Hermite functions are eigenfunctions of $\LL$ in the sense that 
\begin{align*}
	\LL(h_\xi) = \lambda_{|\xi|} h_\xi,
\end{align*}
where $\lambda_k = 2k+n$.
Furthermore, they form an orthonormal basis for $L^2(\RR^n)$.

Let $W_k=\text{span}\{h_\xi: |\xi|=k\}$ and $ V_N = \bigoplus^N_{k=0} W_k$. We define the orthogonal projection of $f\in L^2(\RR^n)$ onto $W_k$ by
\begin{align*}
	\PP_k f = \sum_{|\xi|=k} \ip{f, h_\xi} h_\xi
	\qquad\text{with kernel}\qquad \PP_k(x,y)=\sum_{|\xi|=k} h_\xi(x)h_\xi(y).
\end{align*}
We also define the orthogonal projection of $f$ onto $V_N$ by
\begin{align*}
	\QQ_N f = \sum_{k=0}^N \PP_k f
	\qquad\text{with kernel}\qquad
	\QQ_N(x,y)=\sum_{k=0}^N \sum_{|\xi|=k}h_\xi(x)h_\xi(y).
\end{align*}
The following bounds are known (see \cite[p.376]{MR2399106}): there exists $\vartheta>0$ such that for any $N\in \NN$
\begin{align}\label{QQ bound}
	\QQ_{N}(x,x) 
	\lesssim \left
\lbrace 
	\begin{array}{ll}
			N^{n/2}  \qquad &\forall x,\\
			e^{-2\vartheta|x|^2} \qquad &\text{if}\quad |x| \ge \sqrt{4N+2}.
				\end{array}
\right.
\end{align}

We will use the following function throughout:
\begin{align}\label{efn}
	\e_{N}(x) = \left
\lbrace 
	\begin{array}{ll}
			1  \qquad &\text{if}\quad |x|  < \sqrt{N},\\
			e^{-\vartheta |x|^2} \qquad &\text{if}\quad |x|\ge \sqrt{N}.
	\end{array}
\right.
\end{align}
It follows  from \eqref{QQ bound} that for any $\ve>4$ and $N\in\NN$, we have
\begin{align}\label{QQ est}
\QQ_{4^j+N}(x,x)\lesssim 2^{jn}(\e_{\ve 4^j}(x))^2 \qquad \forall \; j\in\NN_0,
\end{align}
where the implicit constant depends only on $N,$ $n,$ $\ve$ and $\vartheta.$
\subsection{Hermite tiles}
In our setting, the notion of \emph{Hermite tiles} replaces that of dyadic cubes. We briefly present such a concept in this section; further details can be found in \cite{MR4205734, MR2399106}. 

Fix a positive structural constant $\delta_\star$ small enough ($0<\delta_\star<\f{1}{37}$ suffices) and, for each $j\in\NN_0,$ consider 
the collection $\X_j$ of \emph{nodes}  defined as  the set of $n$-tuples of zeros of the Hermite polynomial $H_{2N_j}$ with 
$$ N_j = \floor{(1+11\delta_\star)(\tfrac{4}{\pi})^2 4^j} +3. $$
To each node $\zeta\in\X_j,$ we associate a \emph{tile} $R_\zeta$ with sides parallel to the axes, so that each such tile contains precisely one node and any two  different tiles with nodes in $\X_j$ have disjoint interiors. The tiles are approximately cubes along the diagonals of $\RR^n$, and are rectangular boxes off the diagonal (see Lemma \ref{lem: tiles} below). Given any tile $R$ we denote its node by $x_R$. 

We set $\E_j=\{R_\zeta\}_{\zeta\in\X_j}$ (i.e. $\E_j$ is   the collection of all $j$th level tiles)  and we define
$$\E=\bigcup_{j\ge 0}\E_j.$$ 
Note that, by construction, $\E_j$ contains approximately $4^{jn}$ tiles. 
Our article relies on the following properties of these tiles (see \cite{MR2399106}), which holds if $\delta_\star$ is chosen small enough:
\begin{Lemma}\label{lem: tiles}
There exist constants $c_0$, $c_1$, $c_2$, $c_3$ and $c_4$  depending only on $\delta_\star$ and $n,$ such that for each $j\in\NN_0$ and each tile $R\in\E_j$ the following properties hold. 
\begin{enumerate}[\upshape(a)]
\item If $|x_R|\le (1+4\delta_\star)2^{j+1},$ it holds that   
\begin{align*}
R\subset Q(x_R,c_0 2^{-j}).
\end{align*}
\item In general, it holds that
\begin{align*}
 Q(x_R, c_1 2^{-j})\subset R \subset Q(x_R, c_2 2^{-j/3}).
 \end{align*}
 \item Set $\Q_j=\bigcup\limits_{P\in\E_j}P = Q(0,\zeta_{N_j}+2^{-j/6});$ it holds that
\begin{align*} 
Q(0,2^j)\subset \Q_j \subset Q(0,c_32^j).
\end{align*}
\item $R$ can be subdivided into a disjoint union of subcubes with sidelength roughly equal to $2^{-j};$ more precisely, each such subcube $Q$ satisfies
$$ Q(x_Q, c_4 2^{-j-1})\subset Q \subset Q(x_Q, c_4 2^{-j}).$$
Denoting by $\widehat{\E}_j$ the collection of all subcubes corresponding to tiles in $\E_j,$ it holds that 
\begin{align}\label{hermite cubes}
 \bigcup_{Q\in\widehat{\E}_j}Q= \Q_j.
 \end{align}
 \end{enumerate}
 \end{Lemma}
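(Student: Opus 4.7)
The plan is to reduce all four parts to quantitative information about the positive zeros $0<\zeta_1<\zeta_2<\dots<\zeta_{N_j}$ of the one-dimensional Hermite polynomial $H_{2N_j}$, since each node $\zeta\in\X_j$ is an $n$-tuple of such zeros and each tile $R_\zeta$ is a tensor product of intervals bounded by consecutive zeros. Everything then comes down to Plancherel--Rotach asymptotics, which is exactly the strategy carried out in \cite{MR2399106}; what follows is an outline of the ingredients I would invoke.

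Write $\mu=\sqrt{4N_j+2}$ for the classical turning point, so that $\mu\asymp 2^j$ by the choice of $N_j$. Plancherel--Rotach gives three facts: (i) \emph{bulk spacing} $\zeta_{i+1}-\zeta_i \asymp N_j^{-1/2}\asymp 2^{-j}$, uniformly for $\zeta_i\le(1-\eta)\mu$ for any fixed $\eta>0$; (ii) \emph{edge spacing} $\zeta_{i+1}-\zeta_i \lesssim N_j^{-1/6}\asymp 2^{-j/3}$ holds globally, with the worst case concentrated near $\mu$; and (iii) $\zeta_{N_j}=\mu(1+o(1))$, so with the specific constant $N_j = \floor{(1+11\delta_\star)(4/\pi)^2 4^j}+3$ one obtains $\zeta_{N_j}\in [2^j,\,c_3 2^j]$ for an appropriate $c_3$. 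Part (c) is immediate from (iii) once one adds the $2^{-j/6}$ boundary buffer built into the definition $\Q_j = Q(0,\zeta_{N_j}+2^{-j/6})$.

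For the remaining parts, the inner containment $Q(x_R,c_1 2^{-j})\subset R$ in (b) follows from the global lower bound on consecutive spacings (which stays of order $N_j^{-1/2}$ everywhere), while the outer containment $R\subset Q(x_R,c_2 2^{-j/3})$ uses the global upper bound in (ii). Part (a) refines this in the bulk: the hypothesis $|x_R|\le (1+4\delta_\star)2^{j+1}$, combined with (iii), keeps every coordinate of $x_R$ bounded by $(1-\eta)\mu$ for some $\eta>0$ provided $\delta_\star$ is small; then (i) yields the sharper side-length $c_0 2^{-j}$. Part (d) is combinatorial: each side of $R$ has length at most $c_2 2^{-j/3}$, so it can be partitioned into $O(2^{2j/3})$ segments of length between $c_4 2^{-j-1}$ and $c_4 2^{-j}$, and the product structure gives the subcubes $Q\in\widehat{\E}_j$; the identity \eqref{hermite cubes} follows since the subcubes of a tile exactly tile it.

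The main obstacle is the explicit matching of structural constants in part (a): one must quantitatively compare $(1+4\delta_\star)\cdot 2$ with $(8/\pi)(1+11\delta_\star)^{1/2}$ and produce a margin $\eta>0$ in which the bulk asymptotics (i) hold with prescribed constants. This numerical bookkeeping is precisely the reason for the restriction $0<\delta_\star<1/37$ imposed at the outset. Beyond this matching and the standard Plancherel--Rotach input, no new ideas are needed; as the lemma is a direct translation of results proved in \cite{MR2399106,MR4205734}, I would simply reference those works for the detailed execution.
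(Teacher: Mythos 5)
The paper does not prove this lemma; it simply states it with the citation ``(see \cite{MR2399106})'', so the ``paper's proof'' is just a deferral to Petrushev--Xu. Your proposal correctly identifies this, accurately sketches the Plancherel--Rotach ingredients (bulk spacing $\asymp N_j^{-1/2}$, edge spacing $\lesssim N_j^{-1/6}$, location of the largest zero) that underlie the result in that reference, and then defers to \cite{MR2399106,MR4205734} for the details exactly as the paper does — so the approach is the same, with your outline adding helpful exposition of why the constants and exponents in parts (a)--(d) take the stated form.
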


By considering the two types of tiles listed in Lemma~\ref{lem: tiles} and  using \eqref{efn}, the following estimate is obtained with $\ve=4(1+4\delta_\star)^2$ and $s>0:$
\begin{align}\label{eq:tile control}
|R|^{1/2} 2^{jn} \big(\e_{\ve 4^j}(x_R)\big)^{s}\lesssim |R|^{-1/2}\quad \forall R\in \E_j, j\in \NN_0.
\end{align}
 Indeed, if $|x_R|\le (1+4\delta_\star)2^{j+1}$ then $|R|\sim 2^{-jn}$ and \eqref{eq:tile control} follows. On the other hand, if $|x_R|>(1+4\delta_\star)2^{j+1}$ then \eqref{eq:tile control} follows from the fact that $2^{-jn}\lesssim |R|\lesssim 2^{-jn/3}$  and the bound  $\e_{\ve 4^j}(x_R)^{s}\lesssim 2^{-j\beta}$ with $\beta>0$.

We next observe, for later use, that if  $\beta\ge 0$ then it holds that 
\begin{align}\label{eq: ebound}
\e_{\ve 4^j}(x)\lesssim \Big(1+\f{|x|}{2^j}\Big)^{-\beta}\quad \forall x\in \RR^n, j\in \NN_0,
\end{align}
where the implicit constant depends on $\ve$ and $\beta.$ Indeed, if $|x|\le \sqrt{\ve}2^j$ then, for any $\beta\ge 0,$ we have
$$ \e_{\ve 4^j}(x)=1\le (1+\sqrt{\ve})^\beta \Big(1+\f{|x|}{2^j}\Big)^{-\beta}\quad \forall x\in \RR^n, \forall j\in \NN_0.$$
On the other hand, if $|x|>\sqrt{\ve}2^j$ then $2(1+|x|)>2^j+|x|$ since $\ve > 4;$ hence, for any $\beta\ge0$, we obtain
$$ \e_{\ve 4^j}(x)=e^{-\vartheta|x|^2} \le C_\beta (1+|x|)^{-\beta} \le C_\beta 2^\beta (2^j+|x|)^{-\beta} \quad \forall x\in \RR^n, \forall j\in \NN_0,$$
which gives \eqref{eq: ebound} since $2^{j\beta}\ge 1$.

The following  estimates will be employed throughout the paper.
\begin{Lemma}\label{lem: hoppe}
Let $\phi$ be a smooth function defined in $[0,\infty),$ set $\phi_j(x)=\phi(2^{-j}x)$ for $j\in \NN_0$ and consider  $\ell, N\in \NN,$ $N>\ell.$
\begin{enumerate}[\upshape(a)]
\item If $\phi^{(m)}(0)=0$ for all $m\in \NN,$ it holds that 
\begin{equation*}
 |\diff^\ell_k(\phi_j(\sqrt{\lambda_k}))|\lesssim   \|\phi^{(N)}\|_{L^\infty}  2^{-jN} \lambda_k^{N/2-\ell}\quad \forall j,k\in \NN_0,
 \end{equation*}
 where the implicit constant depends on $N$ and $\ell.$
 \item If $\phi^{(m)}(0)\ne 0$ for some $m\in \NN,$ it holds that
 \begin{equation*}
 |\diff^\ell_k(\phi_j(\sqrt{\lambda_k}))|\lesssim  \max_{1\le s\le N}\{\|\phi^{(s)}\|_{L^\infty}\} \lambda_k^{-\ell/2}\quad \forall j,k \in \NN_0,
 \end{equation*}
  where the implicit constant depends on $N$ and $\ell.$

\end{enumerate}
\end{Lemma}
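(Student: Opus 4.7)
The plan is to reduce the discrete forward differences to integrals of classical derivatives, expand $(\phi\circ u)^{(\ell)}$ via the higher-order chain rule with $u(s)=2^{-j}(2s+n)^{1/2}$, and then exploit either the flatness of $\phi$ at $0$ (for (a)) or trivial $L^\infty$ bounds on $\phi^{(r)}$ (for (b)).

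Setting $F(s)=\phi(u(s))=\phi_j(\sqrt{2s+n})$, a standard induction on $\ell$ gives the representation
\begin{equation*}
\diff^\ell_k F(k)=\int_{[0,1]^\ell} F^{(\ell)}(k+t_1+\cdots+t_\ell)\, dt_1\cdots dt_\ell,
\end{equation*}
so it suffices to bound $|F^{(\ell)}(s)|$ uniformly for $s\in[k,k+\ell]$; on this interval $2s+n\sim \lambda_k$ with constants depending only on $\ell$ and $n$. Since $u^{(m)}(s)=c_m\, 2^{-j}(2s+n)^{1/2-m}$ for every $m\ge 1$, grouping the Fa\`a di Bruno expansion by the total number $r$ of derivatives falling on $\phi$ produces
\begin{equation*}
F^{(\ell)}(s)=\sum_{r=1}^\ell C_{r,\ell}\,\phi^{(r)}(u(s))\, 2^{-jr}(2s+n)^{r/2-\ell},
\end{equation*}
with $C_{r,\ell}$ purely combinatorial; the constraints $\sum_m k_m=r$ and $\sum_m m\, k_m=\ell$ in the Fa\`a di Bruno indices are exactly what produce the exponents $-jr$ and $r/2-\ell$.

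For part (b), I insert $|\phi^{(r)}(u(s))|\le \|\phi^{(r)}\|_{L^\infty}$ and note that $2^{-jr}(2s+n)^{r/2-\ell}\le (2s+n)^{-\ell/2}$, since $2^{-jr}\le 1$ and $(2s+n)^{(r-\ell)/2}\le 1$ (as $r\le\ell$ and $2s+n\ge n\ge 1$). This yields $|F^{(\ell)}(s)|\lesssim \max_{1\le r\le \ell}\|\phi^{(r)}\|_{L^\infty}(2s+n)^{-\ell/2}$, dominated, since $N>\ell$, by $\max_{1\le s\le N}\|\phi^{(s)}\|_{L^\infty}\,\lambda_k^{-\ell/2}$, which is the claim.

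For part (a), the hypothesis $\phi^{(m)}(0)=0$ for every $m\in\NN$ lets me Taylor expand each $\phi^{(r)}$, $1\le r\le\ell$, with integral remainder at $0$ up to order $N-r$ (this is where $N>\ell\ge r$ is used) to obtain $|\phi^{(r)}(u)|\lesssim |u|^{N-r}\|\phi^{(N)}\|_{L^\infty}$. Substituting $u=u(s)=2^{-j}(2s+n)^{1/2}$ into the expansion above collapses each $r$-term into $\|\phi^{(N)}\|_{L^\infty}\,2^{-jN}(2s+n)^{N/2-\ell}$, since
\begin{equation*}
2^{-j(N-r)}(2s+n)^{(N-r)/2}\cdot 2^{-jr}(2s+n)^{r/2-\ell}=2^{-jN}(2s+n)^{N/2-\ell}
\end{equation*}
independently of $r$; replacing $2s+n$ by $\lambda_k$ finishes (a). The only subtle point is the Fa\`a di Bruno bookkeeping; once the exponents of $2^{-j}$ and $\lambda_k$ are correctly aligned, both conclusions reduce to elementary arithmetic.
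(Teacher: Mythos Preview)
Your proof is correct and follows essentially the same strategy as the paper: reduce the finite difference to the $\ell$-th derivative of $s\mapsto\phi_j(\sqrt{2s+n})$, apply the higher-order chain rule (the paper calls it Hoppe's chain rule, you call it Fa\`a di Bruno), and then use Taylor expansion at $0$ for (a) and the trivial $L^\infty$ bound for (b). The only cosmetic difference is that the paper invokes the mean value theorem for finite differences to land at a single point $\nu\in[k,k+\ell]$, whereas you use the integral representation $\diff^\ell_k F(k)=\int_{[0,1]^\ell} F^{(\ell)}(k+t_1+\cdots+t_\ell)\,dt$ and bound the integrand uniformly; both reductions are standard and lead to identical estimates thereafter.
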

\begin{proof} 
By the mean value theorem for finite differences and Hoppe's chain rule we have 
\begin{align}\label{eq: hoppe}
|\diff^\ell_k(\phi_j(\sqrt{\lambda_k}))|=|\partial_\nu^\ell(\phi_j(\sqrt{\lambda_\nu}))|=\left|\sum_{r=1}^\ell c_r  \phi^{(r)}(2^{-j}\sqrt{\lambda_\nu})\lambda_\nu^{r/2-\ell}2^{-jr}\right|
\end{align}
 for some $k\le \nu\le k+\ell$ and appropriate constants $c_r.$

 \medskip
 
To prove (a),
 let $\bar{x}$ be between $0$ and $x$ and such that
 \begin{equation*}
 \phi^{(r)}(x)=\sum_{s=0}^{N-r-1}\frac{\phi^{(r+k)}(0)}{s!} x^s+\frac{\phi^{(N)}(\bar{x})}{(N-r)!} x^{N-r}= \frac{\phi^{(N)}(\bar{x})}{(N-r)!} x^{N-r},\quad r\in \NN.
 \end{equation*}
 This leads to $|\phi^{(r)}(x)|\lesssim \|\phi^{(N)}\|_{L^\infty}|x|^{N-r}$ and therefore, by \eqref{eq: hoppe}, we obtain
 \begin{equation*}
 |\diff^\ell_k(\phi_j(\sqrt{\lambda_k}))|\lesssim \|\phi^{(N)}\|_{L^\infty} \sum_{r=1}^\ell |c_r|  |2^{-j}\sqrt{\lambda_\nu}|^{N-r}\lambda_\nu^{r/2-\ell}2^{-jr}\lesssim  \|\phi^{(N)}\|_{L^\infty}  2^{-jN} \lambda_k^{N/2-\ell}.
 \end{equation*}
For part (b), $\eqref{eq: hoppe}$ gives
  \begin{equation*}
 |\diff^\ell_k(\phi_j(\sqrt{\lambda_k}))|\lesssim \max_{1\le s\le \ell}\{\|\phi^{(s)}\|_{L^\infty}\} \lambda_k^{-\ell/2} \le \max_{1\le s\le N}\{\|\phi^{(s)}\|_{L^\infty}\} \lambda_k^{-\ell/2}. \qedhere
 \end{equation*}
 \end{proof}

\subsection{Hermite Besov and Hermite Triebel--Lizorkin spaces}\label{sec: spaces}

We begin this section by introducing some notation that will be used in the definition of the Hermite Besov and Hermite Triebel--Lizorkin spaces.

\begin{Definition}[Admissible functions]\label{def: admissible}
We say that $(\varphi_0, \varphi)$ is an admissible pair if $\varphi_0, \varphi\in C^\infty(\RR_+)$ and for some constants $b_0>0,$ $0<b_1<1$  and $1/4<b_2<b_3<1,$
\begin{align*}
	&\supp\varphi_0\subset [0,1], & & |\varphi_0| > b_0 \quad\text{on}\quad [0,b_1], & & \varphi_0^{(m)}(0)=0 \quad \forall m\in\NN, \\
	&\supp\varphi\subset [\tfrac{1}{4},1], & & |\varphi|>b_0 \quad\text{on}\quad [b_2,b_3].
\end{align*}
Given an admissible pair $(\varphi_0, \varphi)$, we set $\varphi_j(\lambda)=\varphi(2^{-j}\lambda)$ if $j\in \NN$, $\lambda\in\RR_+$ and call the resulting collection $\{\varphi_j\}_{j\in\NN_0}$ an admissible system. 
\end{Definition}

Recall that the Hermite functions $h_\xi$ with $\xi\in \NN_0^n$ are members of $\sz(\RR^n)$. Then for an admissible system $\{\varphi_j\}_{j\in\NN_0}$ we may define the operators $\varphi_j(\sqrt{\LL})$ on $\sz'(\RR^n)$ by
$$ \varphi_j(\sqrt{\LL})f(x)=\sum_{\kk\in\NN_0}\varphi_j(\sqrt{\lambda_\kk}) \,\PP_\kk(f)(x)\qquad \forall f\in\sz'(\RR^n), x\in \RR^n,$$
where  $ \ip{f,\phi}=f(\phi)$ for $f\in\sz'(\RR^n)$ in the expression for $\PP_\kk(f).$ 
The kernels of the operators $\vp_j(\sqrt{\LL})$ are given by
$$ \vp_j(\sqrt{\LL})(x,y) 
= \sum_{\kk\in\NN_0} \vp_j(\sqrt{\lambda_\kk}) \, \PP_\kk (x,y) 
=\sum_{\kk\in\NN_0} \vp_j(\sqrt{\lambda_\kk}) \sum_{|\xi|=\kk} h_\xi(x) h_\xi(y).
$$
Denote by $\{I_j\}_{j\in\NN_0}$ the following subsets of $\NN_0$: $I_j=[\frac{1}{2}4^{j-2}-\floor{\frac{n}{2}}, \frac{1}{2}4^{j}-\ceil{\frac{n}{2}}]\cap \NN_0$ for $j\in \NN,$ $I_0=\{0\}$ if $n=1$ and $I_0=\emptyset$ if $n\ge 2$.
In view of the support of $\varphi_j,$ it follows that
$$ \varphi_j(\sqrt{\LL})f(x)=\sum_{\kk\in I_j}\varphi_j(\sqrt{\lambda_\kk})\, \PP_\kk(f)(x).$$
We note that the kernels $\vp_j(\sqrt{\LL})(x,y)$ satisfy some useful smoothness and cancellation estimates that will play an important throughout this paper. These estimates can be found in Lemma \ref{lem: phiest} in the Appendix.

We are now ready to define the Hermite Besov and Hermite Triebel--Lizorkin spaces.

\begin{Definition}[Hermite distribution spaces]
Let  $\alpha\in\RR$ and  $0<q\le\infty$.
For $0<p\le\infty,$ we define the  Hermite Besov space $B^{p,q}_{\alpha} = B^{p,q}_{\alpha}(\LL)$ as the class of tempered distributions $f\in \sz'(\RR^n)$ such that 
$$ \Vert f\Vert_{B^{p,q}_{\alpha}} = \Big(\sum_{j\in\NN_0}\big(2^{j\alpha}\Vert \varphi_j(\sqrt{\LL})f\Vert_{L^p}\big)^q\Big)^{1/q}<\infty;$$
for $0<p<\infty,$ we define the Hermite Triebel--Lizorkin space  $F^{p,q}_{\alpha} = F^{p,q}_{\alpha}(\LL)$ as the class of tempered distributions $f\in \sz'(\RR^n)$ such that 
$$ \Vert f\Vert_{F^{p,q}_{\alpha}} = \Big\Vert \Big(\sum_{j\in\NN_0}\big(2^{j\alpha}| \varphi_j(\sqrt{\LL})f|\big)^q\Big)^{1/q} \Big\Vert_{L^p}<\infty.$$
\end{Definition}

\begin{Definition}[Hermite sequence spaces]
Let  $\alpha\in\RR$ and  $0<q\le\infty$.
For $0<p\le\infty,$  we define the Hermite Besov sequence space $b^{p,q}_{\alpha}=b^{p,q}_{\alpha}(\LL)$ as the set of all sequences of complex numbers $s=\{s_R\}_{R\in \E}$ such that 
$$ \Vert s\Vert_{b^{p,q}_{\alpha}} =\bigg\{ \sum_{j\in\NN_0} 2^{j\alpha q}\Big(\sum_{R\in\E_j}  \big(|R|^{1/p-1/2}|s_R|\big)^p \Big)^{q/p}\bigg\}^{1/q} <\infty;
$$
for $0<p<\infty,$  we define the  Hermite Triebel--Lizorkin sequence space $f^{p,q}_{\alpha}=f^{p,q}_{\alpha}(\LL)$ as the set of all sequences of complex numbers $s=\{s_R\}_{R\in \E}$ such that
$$ \Vert s\Vert_{f^{p,q}_{\alpha}}= \bigg\Vert \Big(\sum_{j\in\NN_0} 2^{j\alpha q}\sum_{R\in\E_j}\big(\Ind_R(\cdot)|R|^{-1/2} |s_R|\big)^q\Big)^{1/q}\bigg\Vert_{L^p} <\infty.
$$
\end{Definition}

We will use the notation $A^{p,q}_\alpha(\LL)$  (or $A^{p,q}_\alpha$) to refer to $B^{p,q}_\alpha(\LL)$ or $F^{p,q}_\alpha(\LL),$ with the understanding that  $\alpha\in \RR,$ $0<q\le \infty,$ $0<p\le \infty$ if $A=B$ and $0<p<\infty$ if $A=F.$ An analogous comment applies to the sequence spaces, denoted by $a^{p,q}_\alpha(\LL)$ (or $a^{p,q}_\alpha$). 

The spaces $A^{p,q}_\alpha(\LL)$ are independent of the choice of $(\varphi_0,\varphi)$ (see \cite[Theorems 3 and 5]{MR2399106} and also the earlier works  \cite{MR1330218,MR1403122} for Triebel--Lizorkin spaces). Moreover, $A^{p,q}_{\alpha}(\LL)$ are quasi-Banach spaces continuously embedded in $\sz'(\RR^n)$ (see \cite[Proposition 4 and p.392]{MR2399106}) and have $\sz(\RR^n)$ as a dense subspace for finite values of $p$ and $q.$  In addition, as shown in \cite{MR2399106}, the spaces $A^{p,q}_{\alpha}(\LL)$ are in general different from the classical Triebel--Lizorkin and Besov spaces associated to the Laplacian operator. On the other hand, it holds that $F^{p,2}_0(\LL)=L^p(\RR^n) $ for $1<p<\infty$ with equivalent norms.

We adopt the notation
\begin{align*}
	n_{p,q} = \left
\lbrace 
	\begin{array}{ll}
			\f{n}{\min\{1,p,q\}}  \qquad &\text{for } \quad F^{p,q}_\alpha(\LL),\\
			\f{n}{\min\{1,p\}} \qquad &\text{for } \quad B^{p,q}_\alpha(\LL).
	\end{array}
\right.
\end{align*}

\subsubsection{Frame decompositions} 
The construction of frames  for Hermite Besov and Hermite Triebel--Lizorkin spaces given in \cite{MR2399106} relies on a certain cubature formula for functions in $V_N$. 
Before presenting the cubature formula we introduce the function 
$$ \tau(N,x)= \f{1}{\QQ_N(x,x)} \qquad \forall x\in\RR, N\in \NN_0,$$
which is the well known \emph{Christoffel function}. It has certain useful asymptotic properties which are listed in \cite[p.376]{MR2399106}. 

 	The following cubature formula  \cite[Corollary 2]{MR2399106}
	\begin{align}\label{eq:cubature}
	 \int_{\RR^n} f(x)\,g(x)\,dx \sim \sum_{\zeta\in \X_j} \tau_\zeta\, f(\zeta)\,g(\zeta),\qquad \zeta = (\zeta_{\alpha_1},\dots,\zeta_{\alpha_n}), \quad\tau_\zeta = \prod_{k=1}^n \tau(2N_j,\zeta_{\alpha_k}), \end{align}
	is exact for all $f\in V_k$ and $g\in V_\ell$ with $k+\ell \le 4N_j-1$.

If $\{\varphi_j\}_{j\in\NN_0}$ is an admissible system,  for  each tile $R\in \E_j$ we set
$$ \varphi_R(x)=\tau_R^{1/2} \varphi_j(\sqrt{\LL})(x,x_R),$$
where $x_R$ is the node of $R$ and $\tau_R =\tau_{x_R}$ is the coefficient in the cubature formula \eqref{eq:cubature}. It holds that $\tau_{R}\sim |R|$ for any tile $R$ (see \cite[(2.33)]{MR2399106}). Following the convention introduced in \cite{MR2399106}, we  refer to the functions $\varphi_R$ as \emph{needlets}.

Given  admissible systems $\{\varphi_j\}_{j\in\NN_0}$ and $\{\psi_j\}_{j\in\NN_0}$ we define the \emph{analysis}  operator $S_\varphi$ and \emph{synthesis} operator $T_\psi$ by
\begin{align*}
	S_\varphi: f\longmapsto \{ \ip{f,\varphi_R}\}_{R\in\E} &&\text{and}&&
	T_\psi: \{s_R\}_{R\in\E}\longmapsto \sum_{R\in\E} s_R\psi_R.
\end{align*}

The following frame decompositions were proved in \cite{MR2399106} (see also \cite{MR4205734}).
\begin{Theorem}[Frame decomposition]\label{th: frame}
Let $\alpha\in\RR$, $0<q\le \infty,$ and $0<p<\infty$ if $A^{p,q}_\alpha(\LL)=F^{p,q}_\alpha(\LL)$ or $0<p\le\infty$ if $A^{p,q}_\alpha(\LL)=B^{p,q}_\alpha(\LL)$. Then,
\begin{enumerate}[\upshape(a)]
	\item the operator $T_\psi: a^{p,q}_{\alpha}(\LL)\to A^{p,q}_{\alpha}(\LL)$ is bounded;
	\item the operator $S_\varphi: A^{p,q}_{\alpha}(\LL)\to a^{p,q}_{\alpha}(\LL)$ is bounded;
	\item if $\{\varphi_j\}_{j\in\NN_0}$ and $\{\psi_j\}_{j\in\NN_0}$   satisfy
	\begin{align}\label{CRF 1}
	\sum_{j\ge 0}\psi_j(\lambda)\varphi_j(\lambda) = 1 \qquad \forall \lambda\ge 0,
	\end{align}
	 then $T_\psi\circ S_\varphi = I$ on $A^{p,q}_{\alpha}(\LL)$ (with convergence in $\sz'(\RR^n)$ --  \cite[Proposition 3]{MR2399106}.)
\end{enumerate}
\end{Theorem}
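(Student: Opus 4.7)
The plan is to prove all three parts using a common toolkit: the kernel estimates for $\vp_j(\sqrt{\LL})(x,y)$ recorded in Lemma~\ref{lem: phiest} (which follow from the Hermite spectral expansion together with Lemma~\ref{lem: hoppe}(a), applicable since $\vp_0^{(m)}(0)=0$ for all $m$ and $\vp$ vanishes near zero), the cubature formula \eqref{eq:cubature}, the Christoffel asymptotic $\tau_R\sim|R|$, and standard Hardy--Littlewood/Fefferman--Stein maximal function inequalities. The key pointwise bound has the form
\[
|\vp_j(\sqrt{\LL})(x,y)|\lesssim 2^{jn}(1+2^j|x-y|)^{-M}\,\e_{\ve 4^j}(x)\,\e_{\ve 4^j}(y)
\]
for arbitrarily large $M$, with analogous Lipschitz versions for differences in $x$ or $y$.

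For part (c), the key observation is that $\vp_j(\sqrt{\LL})f\in V_{M_j}$ with $M_j\le \tfrac12 4^j$, since $\vp_j$ is supported in $[2^{j-2},2^j]$, and similarly $y\mapsto \psi_j(\sqrt{\LL})(x,y)\in V_{M_j}$. Their product has degree at most $4^j-1\le 4N_j-1$, placing it within the exactness range of the cubature \eqref{eq:cubature} on $\E_j$. Since $\vp_R(y)=\tau_R^{1/2}\vp_j(\sqrt{\LL})(y,x_R)$, this yields
\[
\sum_{R\in\E_j}\langle f,\vp_R\rangle\,\psi_R(x)=\sum_{R\in\E_j}\tau_R\,\vp_j(\sqrt{\LL})f(x_R)\,\psi_j(\sqrt{\LL})(x,x_R)=\psi_j(\sqrt{\LL})\vp_j(\sqrt{\LL})f(x).
\]
Summing in $j$ and using $\sum_j\psi_j(\lambda)\vp_j(\lambda)=1$ gives $T_\psi\circ S_\vp=I$ on $A^{p,q}_\alpha(\LL)$, with $\sz'$-convergence inherited from the continuous embedding $A^{p,q}_\alpha\hookrightarrow \sz'$.

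For part (b), the core step is a Plancherel--Polya sampling inequality: for each $0<t<\min(p,q)$,
\[
|\vp_j(\sqrt{\LL})f(x_R)|^t\lesssim \frac{1}{|R|}\int_{R^*}|\vp_j(\sqrt{\LL})f|^t,
\]
with $R^*$ a suitable enlargement of $R$. I would prove this by expanding $\vp_j=\wt\vp_j\cdot \vp_j$ for an admissible $\wt\vp$ equal to $1$ on $\supp\vp$, writing $\vp_j(\sqrt{\LL})f(x_R)=\int \wt\vp_j(\sqrt{\LL})(x_R,y)\,\vp_j(\sqrt{\LL})f(y)\,dy$, and combining the kernel bound with the tile inclusion $R\subset Q(x_R,c_22^{-j/3})$ from Lemma~\ref{lem: tiles}. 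Applying Fefferman--Stein (Triebel--Lizorkin case) or Minkowski plus Hardy--Littlewood (Besov case) to $\sum_{R\in\E_j}|R|\,|\vp_j(\sqrt{\LL})f(x_R)|^p\lesssim \|\vp_j(\sqrt{\LL})f\|_{L^p}^p$ and using $\tau_R\sim|R|$ then give $\|S_\vp f\|_{a^{p,q}_\alpha}\lesssim\|f\|_{A^{p,q}_\alpha}$. The main obstacle sits precisely here: for off-diagonal tiles one only has $R\subset Q(x_R,c_22^{-j/3})$, much larger than the bulk size $2^{-j}$, and the resulting slack must be absorbed using the exponential decay factor $\e_{\ve 4^j}(x_R)$ together with the tile bound \eqref{eq:tile control} and the decay \eqref{eq: ebound}, a step specific to the Hermite setting with no direct Euclidean analogue.

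For part (a), decompose $\vp_k(\sqrt{\LL})(T_\psi s)(x)=\sum_j\sum_{R\in\E_j}s_R\,\vp_k(\sqrt{\LL})\psi_R(x)$. The support compatibility of $\vp_k\psi_j$ forces only terms with $|k-j|\le 2$ to contribute, and for such pairs the kernel estimate combined with $\tau_R\sim|R|$ yields the almost-diagonal bound
\[
|\vp_k(\sqrt{\LL})\psi_R(x)|\lesssim 2^{jn/2}|R|^{1/2}(1+2^j|x-x_R|)^{-M}\e_{\ve 4^j}(x)\,\e_{\ve 4^j}(x_R).
\]
A pointwise estimate of $|\vp_k(\sqrt{\LL})T_\psi s(x)|$ by the tile maximal function of $\{s_R\}$ then follows, and Fefferman--Stein or Minkowski plus Hardy--Littlewood concludes the boundedness of $T_\psi:a^{p,q}_\alpha\to A^{p,q}_\alpha$.
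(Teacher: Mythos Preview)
The paper does not prove this theorem; it simply records it as a known result and cites \cite{MR2399106} (and \cite{MR4205734}). So there is no ``paper's own proof'' to compare against in this source. That said, your sketch is precisely the strategy used in \cite{MR2399106}: cubature exactness on $\E_j$ for part~(c), a Plancherel--Polya type sampling inequality together with maximal-function bounds for part~(b), and the observation that $\vp_k(\sqrt{\LL})\psi_R$ vanishes unless $|k-j|\lesssim 1$ (by spectral support) for part~(a). You have also correctly identified the Hermite-specific subtlety in part~(b), namely that off-diagonal tiles have sidelength up to $2^{-j/3}$ and one must use the extra exponential decay $\e_{\ve 4^j}$ to compensate; this is exactly the point handled via \eqref{eq:tile control}.

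One small remark on part~(c): the $\sz'$-convergence of $\sum_j \psi_j(\sqrt{\LL})\vp_j(\sqrt{\LL})f$ to $f$ is not a consequence of the embedding $A^{p,q}_\alpha\hookrightarrow \sz'$ alone. What is needed is the Calder\'on-type reproducing formula in $\sz'$, i.e.\ that the Hermite expansion of any tempered distribution converges in $\sz'$; this is the content of \cite[Proposition~3]{MR2399106} which the paper cites explicitly. Your cubature step correctly converts the needlet sum on level $j$ into $\psi_j(\sqrt{\LL})\vp_j(\sqrt{\LL})f$, but the subsequent ``summing in $j$'' requires that external fact rather than the embedding.
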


\section{Molecules in the Hermite setting}\label{sec: molecules} 

In this section, we introduce a novel notion of molecules associated to the Hermite Besov and Hermite Triebel--Lizorkin spaces  and prove new  results concerning almost orthogonality (Lemma~\ref{lem: AO} in Section~\ref{sec:ao}), molecular decomposition and synthesis (Theorem~\ref{th:decomp} in Section~\ref{sec:decomp} and Theorem~\ref{th:synth} in Section~\ref{sec:synth}). Lemma~\ref{lem: AO} is used in the proof of Theorem~\ref{th:synth}, which is a crucial tool in the proof of the results of Section~\ref{sec: PDOs}.

\subsection{Molecules and almost orthogonality results}\label{sec:ao} We start this section with our new  definition of molecule associated to the Hermite setting.
\begin{Definition}[Smooth molecules]\label{def: molecules}
Let $(M,\theta)\in \{\NN_0\times(0,1)\} \cup \{(-1,1)\}$, $N\in\NN_0$, $0\le \delta\le 1$ and $\mu\ge1$. A function $m\in C^N(\RR^n)$ is said to be an $(M,\theta, N,\delta, \mu)$-molecule for $\LL$ associated with a tile $R\in\E_j$ for some $j\in\NN_0$ if 
\begin{enumerate}[\upshape(i)]
\item for each multi-index $\gamma$ with $0\le|\gamma|\le N$ we have
$$ |\partial^\gamma m(x)| \le \f{|R|^{-1/2}2^{j|\gamma|}}{(1+2^j|x-x_R|)^\mu} \f{1}{\Big(1+\f{|x|}{2^j}\Big)^{N+\delta}}\quad \forall x\in \RR^n,$$
\item for each multi-index $\gamma$ with $|\gamma|=N$ we have
$$ |\partial^\gamma m(x)-\partial^\gamma m(y)| \le |R|^{-1/2}2^{j|\gamma|}\Big(\f{|x-y|}{2^{-j}}\Big)^\delta \f{1}{\big(1+2^j|x-x_R|\big)^\mu} $$
for every $x,y\in\RR^n$ with $|x-y|\le 2^{-j}$. 
\item for each multi-index $\gamma$ with $0\le|\gamma|\le M$ we have
$$\Big|\int_{\RR^n} (y-x_R)^\gamma m(y)\,dy\Big|\le |R|^{-1/2} 2^{-j(n+|\gamma|)}\Big(\f{1+|x_R|}{2^j}\Big)^{M+\theta-|\gamma|}.$$
If $(M,\theta)=(-1,1),$ part {\upshape{(iii)}} is taken to be void. 
\end{enumerate}
\end{Definition}
\begin{Remark}\label{re: molecules}  The following calculations show that if $m$ satisfies (i) for some $N \in\NN$, then $m$ also satisfies (ii) (modulo a constant) for $N-1,$  any $0\le \delta\le 1$  and the same value of $\mu.$

If $|x-y|\le 2^{-j}$ then $(2^j|x-y|)\le (2^j|x-y|)^\delta$ for any $\delta\in[0,1]$. By the mean value theorem and part (i) we have, for some $\widetilde{x}$ between $x$ and $y,$
\begin{align*}
|\partial^\gamma m(x)-\partial^\gamma m(y)|
&\le \sum_{|\beta|=N} |\partial^\beta m(\widetilde{x})||x-y| \\
&\lesssim |R|^{-1/2}2^{jN} |x-y| \big(1+2^j|\widetilde{x}-x_R|\big)^{-\mu} \\
&= |R|^{-1/2}2^{j(N-1)} \Big(\f{|x-y|}{2^{-j}}\Big) \big(1+2^j|\widetilde{x}-x_R|\big)^{-\mu} \\
&\le |R|^{-1/2}2^{j(N-1)}\Big(\f{|x-y|}{2^{-j}}\Big)^\delta\big(1+2^j|\wt{x}-x_R|\big)^{-\mu}.
\end{align*}
We may then conclude our estimate after observing that the triangle inequality, along with the fact that $|x-\wt{x}|\le |x-y|\le 2^{-j}$, yields
$$ \f{1}{\big(1+2^j|\wt{x}-x_R|\big)^{\mu}}\le \Big(\f{1+2^j|x-\wt{x}|}{1+2^j|x-x_R|}\Big)^\mu \le \f{2^\mu}{\big(1+2^j|x-x_R|\big)^\mu}.$$
\end{Remark}

The following lemma presents a first example of molecules.

\begin{Lemma}[Needlets are molecules]\label{lem: needlets}
If $\{\vp_j\}_{j\ge 0}$ is an admissible system, the functions $\{\vp_R\}_{R\in\E}$ are multiples of $(M,\theta, N,\delta,\mu)$-molecules for any $(M,\theta)\in \{\NN_0\times(0,1)\} \cup \{(-1,1)\}$, $N\in \NN_0,$ $0\le \delta\le 1$  and $\mu\ge1$.
\end{Lemma}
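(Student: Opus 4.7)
The strategy is to verify each of the three defining conditions in Definition \ref{def: molecules} directly for $\vp_R=\tau_R^{1/2}\vp_j(\sqrt{\LL})(\cdot,x_R)$, drawing on the kernel estimates for $\vp_j(\sqrt{\LL})(x,y)$ collected in Lemma \ref{lem: phiest}, the tile-size bound \eqref{eq:tile control}, the weight inequality \eqref{eq: ebound}, and the relation $\tau_R\sim|R|$.

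For \emph{condition} (i), Lemma \ref{lem: phiest} supplies, for every multi-index $\gamma$ and every $\mu'\ge 1$ and $s>0$, an estimate of the shape
$$|\partial_x^\gamma \vp_j(\sqrt{\LL})(x,x_R)|\lesi \f{2^{j(n+|\gamma|)}}{(1+2^j|x-x_R|)^{\mu'}}\,\e_{\ve 4^j}(x_R)^s.$$
Multiplying by $\tau_R^{1/2}\sim|R|^{1/2}$ and applying \eqref{eq:tile control} absorbs the prefactor $|R|^{1/2}2^{jn}\e_{\ve 4^j}(x_R)^{s_1}$ (with $s_1>0$ fixed by the lemma) into $|R|^{-1/2}$, while the remaining power $\e_{\ve 4^j}(x_R)^{s_2}$ can, via \eqref{eq: ebound}, be turned into $(1+|x_R|/2^j)^{-(N+\delta)}$ by choosing $s=s_1+s_2$ large enough. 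Combining this with the elementary inequality
$$1+|x|/2^j\le (1+|x_R|/2^j)(1+2^j|x-x_R|),$$
which holds for all $j\ge 0$ since $2^{-j}\le 2^{j}$, and taking $\mu'=\mu+N+\delta$ in the kernel bound trades the extra factor $(1+2^j|x-x_R|)^{N+\delta}$ against the required decay $(1+|x|/2^j)^{-(N+\delta)}$, yielding (i). \emph{Condition} (ii) is then a direct consequence of Remark \ref{re: molecules}: since Step 1 proves (i) for every $N\in\NN_0$, its application with $N+1$ in place of $N$ gives (ii) for the original $N$, any $0\le\delta\le 1$, and the same $\mu$.

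For \emph{condition} (iii), write
$$\int_{\RR^n}(y-x_R)^\gamma \vp_R(y)\,dy=\tau_R^{1/2}\int_{\RR^n}(y-x_R)^\gamma \vp_j(\sqrt{\LL})(y,x_R)\,dy.$$
The cancellation estimates in Lemma \ref{lem: phiest}, which rest on $\vp_0^{(m)}(0)=0$ for every $m\in\NN$ together with $\supp\vp\subset[\tfrac14,1]$ (so that $\vp_j$ vanishes to infinite order at the origin for every $j\ge 0$), furnish an upper bound on the inner integral of the form $2^{-j(n+|\gamma|)}((1+|x_R|)/2^j)^{M+\theta-|\gamma|}\e_{\ve 4^j}(x_R)^{s}$ with $s$ arbitrarily large. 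Multiplying by $\tau_R^{1/2}\sim|R|^{1/2}$ and using \eqref{eq:tile control} once more converts $|R|^{1/2}2^{jn}\e_{\ve 4^j}(x_R)^{s}$ into $|R|^{-1/2}$, and the surplus factor $2^{jn}$ is exactly what is needed to match the target $|R|^{-1/2}2^{-j(n+|\gamma|)}((1+|x_R|)/2^j)^{M+\theta-|\gamma|}$ in (iii).

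\emph{Anticipated obstacle.} The main subtlety is the asymmetry between the natural centering of $\vp_j(\sqrt{\LL})(\cdot,x_R)$ at $x_R$ (which produces decay in $|x_R|$) and the $|x|$-decay required by (i); this is overcome by the elementary triangle-type comparison above combined with the freedom to push both $\mu'$ and the power $s$ in the kernel estimate as high as desired. Aside from this bookkeeping, the argument reduces to gathering constants and invoking the auxiliary estimates already collected in Section \ref{sec: prelim} and Lemma \ref{lem: phiest}.
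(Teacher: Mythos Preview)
Your proposal is correct and follows essentially the same route as the paper: invoke Lemma~\ref{lem: phiest} for the kernel bounds, combine with \eqref{eq:tile control} and \eqref{eq: ebound} for (i), deduce (ii) from Remark~\ref{re: molecules}, and use \eqref{phiest B} with $K=M+\theta$ together with \eqref{eq:tile control} for (iii). Two minor points of bookkeeping: for (i) you take an unnecessary detour, since \eqref{phiest A} already carries a factor $\e_{\ve 4^j}(x)$ (not only $\e_{\ve 4^j}(x_R)$), so \eqref{eq: ebound} applied to that factor gives the $(1+|x|/2^j)^{-(N+\delta)}$ decay directly without the triangle-inequality transfer; and for (iii), \eqref{phiest B} yields $2^{-j|\gamma|}$ rather than $2^{-j(n+|\gamma|)}$, with the missing $2^{-jn}$ supplied by \eqref{eq:tile control} when converting $|R|^{1/2}\e_{\ve 4^j}(x_R)$ into $|R|^{-1/2}2^{-jn}$.
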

\begin{proof}[Proof of Lemma \ref{lem: needlets}]
We will use  Lemma \ref{lem: phiest} to show that $\vp_R$ is a constant multiple of a molecule. The estimate in part (i) of Definition \ref{def: molecules}, with a uniform constant in $R$ and $j,$ follows  for all $\gamma\in \NN_0^n$ from \eqref{phiest A} and estimates \eqref{eq:tile control}
and \eqref{eq: ebound}. The estimate in  part (ii) of Definition \ref{def: molecules} follows, with a uniform constant in $R$ and $j,$ from Remark~\ref{re: molecules}. Part (iii) of Definition \ref{def: molecules} follows, with a uniform constant in $R$ and $j,$ from \eqref{phiest B} and \eqref{eq:tile control} by choosing $K=M+\theta$. 
\end{proof}

The following almost orthogonality result will be useful in the proof of Theorem~\ref{th:synth} and is the main result of this subsection.

\begin{Lemma}[Almost orthogonality]\label{lem: AO}
Let $\{\vp_j\}_{j\in \NN_0}$ be an admissible system and $\{m_R\}_{R\in\E}$ be a collection of $(M, \theta, N, \delta, \mu)$-molecules for some $(M, \theta) \in \{\NN_0\times(0,1)\}\cup\{(-1,1)\}$, $N \in \NN_0$, $0\le \delta\le 1$ and $\mu>\max\{\eta, n + M +\theta\}$ for some $\eta>0$. Then it holds that 
\begin{align}\label{eq:AO1}
 |\vp_j(\sqrt{\LL})m_R(x)| \lesssim \f{|R|^{-1/2}}{(1+2^{j\land k}|x-x_R|)^\eta} 2^{-(n+M+\theta)[(k-j)\vee 0]-(N+\delta)[(j-k)\vee 0]}
 \end{align}
for all $x\in\RR^n,$ $j, k\in\NN_0$ and $R\in\E_k.$ 
\end{Lemma}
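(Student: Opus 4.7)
The plan is to write $\varphi_j(\sqrt{\LL})m_R(x) = \int \K_j(x,y)\,m_R(y)\,dy$, where $\K_j(x,y) := \vp_j(\sqrt{\LL})(x,y)$, and to invoke the size, smoothness, and cancellation estimates on $\K_j$ recorded in Lemma~\ref{lem: phiest}: essentially $\K_j$ is controlled at scale $2^{-j}$ by $2^{jn}(1+2^j|x-y|)^{-L}$ modulo exponential weights $\e_{\ve 4^j}$, with each derivative of order $|\gamma|$ gaining a factor $2^{j|\gamma|}$ and with cancellation to arbitrary order in either variable (since $\varphi_0^{(m)}(0)=0$ for all $m$ and $\varphi$ is supported away from $0$). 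One then splits into the two cases $j\ge k$ and $j<k$, depending on which of the scales $2^{-j}$ or $2^{-k}$ is finer, and handles each by the familiar strategy of Taylor expanding the slower-varying factor and invoking cancellation of the other.

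In the case $j\ge k$ the kernel oscillates on the finer scale, so the approach is to use the smoothness estimates (i)--(ii) of $m_R$ and the cancellation of $\K_j$. I would Taylor-expand $m_R(y)$ about $y=x$ to order $N$: the polynomial piece is killed upon integration against $\K_j(x,\cdot)$ by the infinite-order cancellation of the kernel, while the remainder obeys the H\"older-type bound $|m_R(y)-T_N m_R(y)|\lesssim |R|^{-1/2}\,2^{k(N+\delta)}|x-y|^{N+\delta}$ times the spatial weight coming from (i)--(ii). Pairing this with $\int |x-y|^{N+\delta}|\K_j(x,y)|\,dy\lesssim 2^{-j(N+\delta)}$ produces the required factor $2^{-(j-k)(N+\delta)}$, and the localization $(1+2^{j\land k}|x-x_R|)^{-\eta}=(1+2^k|x-x_R|)^{-\eta}$ is extracted from the combined decays of $\K_j$ in $2^j|x-y|$ and of $m_R$ in $2^k|y-x_R|$.

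In the case $j<k$ the molecule varies on the finer scale, and the dual strategy applies: Taylor-expand $\K_j(x,\cdot)$ about $x_R$ to order $M$, use smoothness of the kernel for derivatives of order up to $M+1$, and invoke the almost-cancellation condition (iii) for the polynomial piece. For each $|\gamma|\le M$ the polynomial term is bounded by
\[
|R|^{-1/2}\,2^{j(n+|\gamma|)}(1+2^j|x-x_R|)^{-L}\cdot 2^{-k(n+M+\theta)}(1+|x_R|)^{M+\theta-|\gamma|},
\]
and the polynomial growth in $|x_R|$ is absorbed into the exponential factor $\e_{\ve 4^j}(x_R)$ furnished by the kernel estimate (or is harmless when $|x_R|\lesssim 2^k$); choosing the largest permissible $|\gamma|=M$ and noting $2^{-k\theta}\le 2^{-(k-j)\theta}$ yields the desired decay $2^{-(k-j)(n+M+\theta)}$. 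The Taylor remainder of order $M+1$ is dominated by $2^{j(n+M+1)}|y-x_R|^{M+1}$ times the kernel's localization; integrating against (i) and dyadically splitting $|y-x_R|$ at $2^{-j}$ and $2^{-k}$ (using the Taylor bound in the near region and the direct size bound together with cancellation of $\K_j$ in the far region) produces at worst $2^{-(k-j)(n+M+\theta)}$ times $(1+2^j|x-x_R|)^{-\eta}=(1+2^{j\land k}|x-x_R|)^{-\eta}$.

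The main obstacles I anticipate are (a) reconciling the polynomial growth factor $(1+|x_R|)^{M+\theta-|\gamma|}$ in (iii) with the exponential weights in the kernel estimates, which requires a careful on-diagonal/off-diagonal analysis using the bound \eqref{eq: ebound} for $\e_{\ve 4^j}$ together with the molecular spatial decay $(1+|x|/2^j)^{-N-\delta}$ from (i); and (b) ensuring convergence of the remainder integrals under the hypothesis $\mu>n+M+\theta$, which is strictly weaker than $\mu>n+M+1$ and therefore precludes crude $L^1$ bounds, forcing the dyadic splitting of $|y-x_R|$ described above so that in one region the Taylor remainder bound is the efficient tool and in the other it is the direct size bound combined with cancellation of $\K_j$.
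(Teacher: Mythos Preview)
Your outline follows the same two-case strategy as the paper: Taylor-expand the coarser-scale factor, use cancellation of the finer one, and control the remainder via a near/far splitting in $|y-x_R|$ (the paper uses three regions, also comparing $|y-x|$ with $\tfrac12|x-x_R|$, to extract the localisation $(1+2^j|x-x_R|)^{-\eta}$; your dyadic splitting can be pushed through similarly).

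One point needs correction. In the case $j\ge k$ you write that ``the polynomial piece is killed upon integration against $\K_j(x,\cdot)$ by the infinite-order cancellation of the kernel.'' It is not killed: the moments $\int (y-x)^\gamma \K_j(x,y)\,dy$ are not zero but only satisfy the bound \eqref{phiest B}, namely $\lesssim 2^{-j|\gamma|}\big((1+|x|)/2^j\big)^{K-|\gamma|}$, which carries polynomial growth in $|x|$. What makes this term harmless is precisely the extra spatial decay factor $\big(1+|x|/2^k\big)^{-N-\delta}$ built into condition~(i) of Definition~\ref{def: molecules} (note the scale is $2^k$, not $2^j$, since $R\in\E_k$); combining the two and choosing $K=N+\delta$ in \eqref{phiest B} gives the factor $2^{-(j-k)(N+\delta)}$. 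This is the dual of the mechanism you correctly identified in obstacle~(a) for the $j<k$ case, where the growth $(1+|x_R|)^{M+\theta-|\gamma|}$ from~(iii) is absorbed by the exponential weight $\e_{\ve 4^j}(x_R)$ in \eqref{phiest A}. You should treat both polynomial pieces symmetrically rather than asserting one of them vanishes.
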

\begin{proof}[Proof of Lemma  \ref{lem: AO}]
We will use the estimates for $\{\vp_j\}_{j\in\NN_0}$ from Lemma \ref{lem: phiest} with decay $\eta_0>\eta+n+N+\delta$ and $|\gamma|\le M+1$ (in estimate \eqref{phiest A}), and $K=N+\delta$ (in estimate \eqref{phiest B}).
We will prove each of the following four cases with constants independent of $x,$ $j,$ $k$ and $R$:
\begin{description}
\item[1a]  $j\le k$, $(M,\theta)=(-1,1)$, $(N,\delta)\in  \NN_0\times [0,1]$, $\mu>\max\{\eta,n\}$
\begin{align}\label{eq:AO1a}
 |\vp_j(\sqrt{\LL})m_R(x)| \lesssim \f{|R|^{-1/2}}{(1+2^j|x-x_R|)^\eta} 2^{-(k-j)n} \end{align}
\item[1b] $j\le k$, $(M,\theta)\in \NN_0\times(0,1)$, $(N,\delta)\in  \NN_0\times [0,1]$, $\mu>\max\{\eta,n+M+\theta\}$
\begin{align}\label{eq:AO1b}
 |\vp_j(\sqrt{\LL})m_R(x)| \lesssim \f{|R|^{-1/2}}{(1+2^j|x-x_R|)^\eta} 2^{-(k-j)[n+M+\theta]} \end{align}
\item[2a] $j> k$,  $(M,\theta)\in \{\NN_0\times(0,1)\}\cup\{(-1,1)\}$, $(N,\delta)=(0,0)$, $\mu\ge \eta$
\begin{align}\label{eq:AO2a}
 |\vp_j(\sqrt{\LL})m_R(x)| \lesssim \f{|R|^{-1/2}}{(1+2^k|x-x_R|)^\eta} \end{align}
\item[2b] $j> k$, $(M,\theta)\in \{\NN_0\times(0,1)\}\cup\{(-1,1)\}$, $(N,\delta)\in \NN_0\times[0,1]$,  $(N,\delta)\neq (0,0),$ $\mu\ge \eta$
\begin{align}\label{eq:AO2b}
 |\vp_j(\sqrt{\LL})m_R(x)| \lesssim \f{|R|^{-1/2}}{(1+2^k|x-x_R|)^\eta} 2^{-(j-k)(N+\delta)} \end{align}
\end{description}
Estimate \eqref{eq:AO1} will then follow by combining each of the above cases appropriately. 

Fix $x$ and $R;$ to handle Cases {\bf{1a}} and {\bf{1b}}, it will be useful to divide $\RR^n$ into the following regions:
\begin{align*}
	\Omega_1&=\{y\in\RR^n: |y-x_R|\le 2^{-j}\},\\
	\Omega_2&=\{y\in\RR^n: |y-x_R|> 2^{-j} \;\text{and}\; |y-x|\le \f{1}{2}|x-x_R|\}, \\
	\Omega_3&=\{y\in\RR^n: |y-x_R|> 2^{-j} \;\text{and}\; |y-x|> \f{1}{2}|x-x_R|\}.
\end{align*}
The following two observations will be useful in the sequel: 
\begin{align}\label{est for omega1}
1+2^j|x-x_R|\le 2(1+2^j|x-y|) \qquad \forall\; y\in \Omega_1\cup \Omega_3,
\end{align}
and
\begin{align}\label{est for omega2}
2^{(k-j)}(1+2^j|x-x_R|) < 3\cdot 2^k|y-x_R| \qquad \forall \; y\in \Omega_2.
\end{align}

\medskip

\underline{Case {\bf{1a}}:}  We have
\begin{align*}
 |\vp_j(\sqrt{\LL})m_R(x)|
 &\le \int_{\Omega_1\cup\Omega_3} \big|\vp_j(\sqrt{\LL})(x,y)\big|\,|m_R(y)|\,dy + \int_{\Omega_2} \big|\vp_j(\sqrt{\LL})(x,y)\big|\,|m_R(y)|\,dy.
 \end{align*}
From \eqref{phiest A}  with $|\gamma|=0$ and $\eta_0\ge \eta+n$, Definition \ref{def: molecules} (i) with $|\gamma|=0$, and the inequality \eqref{est for omega1} we obtain
\begin{align*}
 \int_{\Omega_1\cup\Omega_3} \big|\vp_j(\sqrt{\LL})(x,y)\big|\,|m_R(y)|\,dy 
 &\lesssim \mathop{\int}_{1+ 2^j|x-x_R|\lesssim 1+2^j|y-x|} \f{2^{jn}}{(1+2^j|x-y|)^{\eta_0}}\f{|R|^{-1/2}}{(1+2^k|y-x_R|)^\mu}\,dy \\
&\lesssim 
 \f{|R|^{-1/2}2^{-(k-j)n} }{(1+2^j|x-x_R|)^\eta} 
\end{align*}
since $\mu>n$ and $\eta_0\ge \eta$. 
Proceeding similarly and applying \eqref{est for omega2}, we have
\begin{align*}
 \int_{\Omega_2} \big|\vp_j(\sqrt{\LL})(x,y)\big|\,|m_R(y)|\,dy 
&\lesssim \int_{\Omega_2} \f{2^{jn}}{(1+2^j|x-y|)^{\eta_0}}\f{|R|^{-1/2}}{(1+2^k|y-x_R|)^\mu}\,dy\\ 
&\lesssim \f{2^{-(k-j)n}|R|^{-1/2}}{(1+2^j|x-x_R|)^\eta} 
\end{align*}
since $j\le k,$ $\mu\ge \max\{n,\eta\}$ and $\eta_0>n$.

\medskip

\underline{Case {\bf{1b}}:}
We write
\begin{align*}
\vp_j(\sqrt{\LL})m_R(x)
&= \int_{\RR^n} \vp_j(\sqrt{\LL})(x,y)\,m_R(y)\,dy \\
&= \int_{\RR^n} \Big[ \vp_j(\sqrt{\LL})(x,y) - \sum_{|\gamma|\le M} \f{1}{\gamma !}\partial_y^\gamma \vp_j(\sqrt{\LL})(x,x_R) \,(y-x_R)^\gamma\Big]m_R(y)\,dy  \\
&\qquad + \sum_{|\gamma|\le M} \f{1}{\gamma !}\partial_y^\gamma \vp_j(\sqrt{\LL})(x,x_R) \int_{\RR^n}(y-x_R)^\gamma m_R(y)\,dy  \\
&=: I+ II.
\end{align*}
To study term $I$ we further subdivide $I=I_1+I_2+I_3$ where
$$ I_i = \int_{\Omega_i}\Big[ \vp_j(\sqrt{\LL})(x,y) - \sum_{|\gamma|\le M} \f{1}{\gamma !}\partial_y^\gamma \vp_j(\sqrt{\LL})(x,x_R) \,(y-x_R)^\gamma\Big]m_R(y)\,dy.$$
By Taylor's theorem we have
\begin{align*}
I_1= \sum_{|\gamma|= M+1} \f{1}{\gamma !}\int_{\Omega_1} \partial_y^\gamma \vp_j(\sqrt{\LL})(x,\wt{y})\,(y-x_R)^\gamma m_R(y)\,dy
\end{align*}
where $\wt{y}$ lies on the line segment connecting $y$ and $x_R$. Then, by \eqref{phiest A} with $|\gamma|=M+1$ and Definition \ref{def: molecules} (i) with $|\gamma|=0,$ we have
\begin{align*}
|I_1| 
&\lesssim \int_{\Omega_1} \f{2^{j(n+M+1)}}{(1+2^j |x-\wt{y}|)^{\eta_0}}|y-x_R|^{M+1}\f{|R|^{-1/2}}{(1+2^k |y-x_R|)^\mu}	\,dy \\
&\lesssim  \f{|R|^{-1/2} 2^{j(n+M+1)}}{(1+2^j |x-x_R|)^{\eta_0-1+\theta}}\int_{\Omega_1}\f{1}{(1+2^j|y-x_R|)^{1-\theta}}\f{|y-x_R|^{M+1}}{(1+2^k |y-x_R|)^{\mu}}	\,dy\\
&\lesssim  \f{|R|^{-1/2} 2^{j(n+M+\theta)}}{(1+2^j |x-x_R|)^{\eta_0-1+\theta}}\int_{\RR^n}\f{1}{(1+2^j|y-x_R|)^{1-\theta}}\f{|y-x_R|^{M+\theta}}{(1+2^k |y-x_R|)^{\mu}}	\,dy\\
&\lesssim \f{|R|^{-1/2} 2^{-(k-j)(n+M+\theta)}}{(1+2^j |x-x_R|)^{\eta}},
\end{align*}
since $\mu>n+M+\theta$ and $\eta_0> \eta+n>\eta+1-\theta$. In the second step we applied the triangle inequality along with the facts  $|y-x_R|\le 2^{-j}$ and $|\wt{y}-x_R|\le 2^{-j}$. 

For the second term $I_2$ we have 
\begin{align*}
|I_2|
\le  \int_{\Omega_2}  |\vp_j(\sqrt{\LL})(x,y)| |m_R(y)|\,dy +  \sum_{|\gamma|\le M} |\partial_y^\gamma \vp_j(\sqrt{\LL})(x,x_R)| \int_{\Omega_2} \,|y-x_R|^{|\gamma|} |m_R(y)|\,dy. 
\end{align*}
We next apply \eqref{phiest A} for $|\gamma| \le M$ and Definition \ref{def: molecules} (i) with $|\gamma|=0$ to each integral above.
The first integral in $I_2$ can be estimated in a similar way to the integral over $\Omega_2$ in Case {\bf{1a}} to  obtain
\begin{align*}
\int_{\Omega_2}  |\vp_j(\sqrt{\LL})(x,y)| |m_R(y)|\,dy
\lesssim \f{2^{-(k-j)\mu}|R|^{-1/2}}{(1+2^j|x-x_R|)^\mu} 
\le \f{2^{-(k-j)(n+M+\theta)}|R|^{-1/2}}{(1+2^j|x-x_R|)^\eta},
\end{align*}
since $j\le k$ and $\mu>\max\{\eta, n+M+\theta\}.$  
For the second term in $I_2$ we apply \eqref{est for omega2} to obtain, for each $|\gamma|\le M$, 
\begin{align*}
	|\partial_y^\gamma \vp_j(\sqrt{\LL})(x,x_R)| &\int_{\Omega_2} \,|y-x_R|^{|\gamma|} |m_R(y)|\,dy  \\
&\qquad\lesssim
\f{2^{j(n+|\gamma|)}|R|^{-1/2}}{(1+2^j|x-x_R|)^{\eta_0}} \int_{\Omega_2}\f{|y-x_R|^{|\gamma|}}{(1+2^k|y-x_R|)^\mu}\,dy \\
&\qquad\lesssim \f{|R|^{-1/2} 2^{j(n+|\gamma|)}2^{-(k-j)\mu}}{(1+2^j|x-x_R|)^{\eta_0+\mu}} \int_{\Omega_2} |y-x_R|^{|\gamma|}\,dy \\
&\qquad\lesssim |R|^{-1/2}2^{-(k-j)\mu} \f{(2^j |x-x_R|)^{n+M}}{(1+2^j|x-x_R|)^{\eta_0+\mu}}  \\
&\qquad\lesssim |R|^{-1/2}2^{-(k-j)(n+M+\theta)} \f{1}{(1+2^j|x-x_R|)^{\eta}},
\end{align*}
since $\mu \ge n+M+\theta$ and $\eta_0\ge \eta$.

For the third term,   we apply  \eqref{phiest A} with $|\gamma|\le M,$  Definition \ref{def: molecules} (i) with $|\gamma|=0,$ and use that  $|x-y|\gtrsim |x-x_R|,$ $\mu\ge n+M+\theta$ and $\eta_0\ge \eta,$ to obtain
\begin{align*}
	|I_3|
	&\lesssim \f{|R|^{-1/2}2^{jn}}{(1+2^j|x-x_R|)^{\eta_0}}\int_{\Omega_3} \Big[1+\sum_{|\gamma|\le M} (2^j|y-x_R|)^{|\gamma|}\Big]\f{1}{(1+2^k|y-x_R|)^\mu}\,dy \\
	&\lesssim \f{|R|^{-1/2}2^{j(n+M)}}{(1+2^j|x-x_R|)^{\eta_0}}\int_{|y-x_R|>2^{-j}} \f{|y-x_R|^M}{(1+2^k|y-x_R|)^\mu}\,dy \\
	&\lesssim \f{|R|^{-1/2}2^{-(k-j)(n+M+\theta)}}{(1+2^j|x-x_R|)^{\eta}}.
\end{align*}

For the term $II$ we apply \eqref{phiest A} and Definition \ref{def: molecules} (iii) with  $0\le|\gamma|\le M$ to obtain
\begin{align*}
|II|
&\lesssim 
\sum_{|\gamma|\le M} \f{2^{j(n+|\gamma|)}}{(1+2^j|x-x_R|)^{\eta_0}} \e_{\ve 4^j}(x_R) |R|^{-1/2}2^{-k(n+|\gamma|)}\Big(\f{1+|x_R|}{2^k}\Big)^{M+\theta-|\gamma|} \\
&\le \sum_{|\gamma|\le M} \f{|R|^{-1/2} 2^{-(k-j)(n+M+\theta)}}{(1+2^j|x-x_R|)^\eta} 2^{-j(M+\theta-|\gamma|)}(1+|x_R|)^{M+\theta-|\gamma|}\e_{\ve 4^j}(x_R),
\end{align*}
since $\eta_0\ge \eta.$ By considering separately the  cases when $|x_R| \le \sqrt{\ve}2^j$ and $|x_R| > \sqrt{\ve}2^j$ along with  \eqref{efn}, it follows  that
\begin{align}\label{eq:AO2}
 2^{-j(M+\theta-|\gamma|)}(1+|x_R|)^{M+\theta-|\gamma|}\e_{\ve 4^j}(x_R)\le C_{M,\theta,\vartheta,\ve}.
 \end{align}
 Inserting this bound into the preceding estimate  gives
 $$ |II|\lesssim \f{|R|^{-1/2} 2^{-(k-j)(n+M+\theta)}}{(1+2^j |x-x_R|)^{\eta}}.$$
 Combing the estimates for both terms $I$ and $II$ yields \eqref{eq:AO1b}. 
 
 \medskip
 
 \underline{Case {\bf{2a}}:} Using  \eqref{phiest A} and Definition \ref{def: molecules} (i) with $|\gamma|=0,$    $\mu\ge \eta$, the triangle inequality taking into account the $k< j$, and that  $\eta_0>\eta+n,$ we obtain
\begin{align*}
 |\vp_j(\sqrt{\LL})m_R(x)|
 \le \f{|R|^{-1/2}}{(1+2^k|x-x_R|)^{\eta}} \int_{\RR^n} \f{2^{jn}}{(1+2^j|y-x|)^{\eta_0-\eta}}\,dy 
\lesssim 
 \f{|R|^{-1/2} }{(1+2^k|x-x_R|)^\eta}.
\end{align*}
 
 \medskip

\underline{Case {\bf{2b}}:} We argue as in Case {\bf{1b}}, but reverse the roles of $\vp_j(\sqrt{\LL})$ and $m_R$. We have
\begin{align*}
\vp_j(\sqrt{\LL})m_R(x)
=I+II
\end{align*}
where
\begin{align*}
I&=  \int_{\RR^n}  \vp_j(\sqrt{\LL})(x,y) \Big[ m_R(y)- \sum_{|\gamma|< N} \f{1}{\gamma !}\partial^\gamma m_R(x) \,(y-x)^\gamma\Big]\,dy\\
&\hspace{80pt} -  \sum_{|\gamma|= N} \f{1}{\gamma !}\partial^\gamma m_R(x) \int_{\RR^n}(y-x)^\gamma \vp_j(\sqrt{\LL})(x,y)\,dy,\\
II&=\sum_{|\gamma|\le N} \f{1}{\gamma !}\partial^\gamma m_R(x) \int_{\RR^n}(y-x)^\gamma \vp_j(\sqrt{\LL})(x,y)\,dy.
\end{align*}

By Taylor's remainder theorem we write, for some $\wt{y}$ on the line segment between $x$ and $y$, 
\begin{align*}
I = \sum_{|\gamma| = N} \f{1}{\gamma !}\int_{\RR^n} \big[\partial^\gamma m_R(\wt{y})-\partial^\gamma m_R(x)\big] \,(y-x)^\gamma \vp_j(\sqrt{\LL})(x,y)\,dy =:I_1+I_2
\end{align*}
where
\begin{align*}
I_1 = \sum_{|\gamma| = N} \f{1}{\gamma !}\int_{|x-y|\le 2^{-k}} \big[\partial^\gamma m_R(\wt{y})-\partial^\gamma m_R(x)\big] \,(y-x)^\gamma \vp_j(\sqrt{\LL})(x,y)\,dy
\end{align*}
and
\begin{align*}
I_2 = \sum_{|\gamma| = N} \f{1}{\gamma !}\int_{|x-y|> 2^{-k}} \big[\partial^\gamma m_R(\wt{y})-\partial^\gamma m_R(x)\big] \,(y-x)^\gamma \vp_j(\sqrt{\LL})(x,y)\,dy.
\end{align*}
By setting $\eta_0>\eta+N+n+\delta,$ we have by \eqref{phiest A} with $|\gamma|=0$ and Definition \ref{def: molecules} (ii) with $|\gamma|=N$,   
\begin{align*}
|I_1| 
\lesssim \int_{|x-y|\le 2^{-k}}\f{2^{jn}|R|^{-1/2}2^{k(N+\delta)}}{\big(1+2^k |x-x_R|\big)^\mu}  \f{|x-\wt{y}|^\delta |y-x|^{N}}{(1+2^j |x-y|)^{\eta_0}} 	\,dy. 
\end{align*}
Taking into account $\mu\ge\eta$ and $|\wt{y}-x|\le |y-x|$ we have
\begin{align*}
|I_1| 
\lesssim \f{|R|^{-1/2}2^{k(N+\delta)}}{\big(1+2^k |x-x_R|\big)^\eta} \int_{\RR^n} \f{2^{jn}|y-x|^{N+\delta}}{(1+2^j |x-y|)^{\eta_0}} 	\,dy
\lesssim \f{|R|^{-1/2} 2^{-(j-k)(N+\delta)}}{(1+2^k|x-x_R|)^{\eta}}.
\end{align*}
For $I_2$ we apply instead Definition \ref{def: molecules} (i) to obtain
\begin{align*}
|I_2| 
&\le  \sum_{|\gamma| = N} \f{1}{\gamma !}\int_{|x-y|> 2^{-k}} \Big[\big|\partial^\gamma m_R(\wt{y})\big|+\big|\partial^\gamma m_R(x)\big|\Big] \,|y-x|^N \big|\vp_j(\sqrt{\LL})(x,y)\big|\,dy\\
&\lesssim \int_{|x-y|> 2^{-k}} \bigg[\f{1}{(1+2^k |\wt{y}-x_R|)^{\mu}}+\f{1}{(1+2^k |x-x_R|)^{\mu}}\bigg]\f{2^{jn}|R|^{-1/2}2^{kN}|y-x|^{N}}{\big(1+2^k |x-y|\big)^{\eta_0}} \,dy. 
\end{align*}
Taking into account that $j>k,$  $\mu\ge\eta$ and $|\wt{y}-x|\le |y-x|$ and using  the triangle inequality we have
$$ \f{1}{(1+2^k |\wt{y}-x_R|)^{\mu}}  \le \Big(\f{1+2^k|\wt{y}-x|}{1+2^k |x-x_R|}\Big)^{\eta} \le \Big(\f{1+2^j|y-x|}{1+2^k |x-x_R|}\Big)^{\eta}.$$
Inserting this inequality into the previous estimate, and using the fact that $(2^k|x-y|)^\delta >1$, we arrive at
\begin{align*}
|I_2| 
&\lesssim \f{|R|^{-1/2} 2^{k(N+\delta)}}{(1+2^k|x-x_R|)^{\eta}} \int_{|x-y|> 2^{-k}}  \f{2^{jn} |y-x|^{N+\delta}}{(1+2^j |y-x|)^{\eta_0-\eta}}	\,dy
\lesssim \f{|R|^{-1/2} 2^{-(j-k)(N+\delta)}}{(1+2^k|x-x_R|)^{\eta}}.
\end{align*}

Let us turn to  term $II$. Here we apply \eqref{phiest B} with $K=N+\delta$ and Definition \ref{def: molecules} (i) with  $0\le|\gamma|\le N,$ and use that $\mu\ge \eta,$ to obtain
\begin{align*}
|II|
&\lesssim 
\sum_{|\gamma|\le N} \f{|R|^{-1/2} 2^{k|\gamma|}}{(1+2^k|x-x_R|)^{\mu}} \Big(1+\f{|x|}{2^k}\Big)^{-N-\delta} \, 2^{-j|\gamma|}\Big(\f{1+|x|}{2^j}\Big)^{N+\delta-|\gamma|} \\
&= \sum_{|\gamma|\le N} \f{|R|^{-1/2} 2^{-(j-k)(N+\delta)}}{(1+2^k|x-x_R|)^\eta} 2^{-k(N+\delta-|\gamma|)}(1+|x|)^{N+\delta-|\gamma|} \Big(1+\f{|x|}{2^k}\Big)^{-N-\delta}.
\end{align*}
 Considering $|x|\le 2^k$ and $|x|\ge 2^k,$ we see that
\begin{align*}
2^{-k(N+\delta-|\gamma|)}(1+|x|)^{N+\delta-|\gamma|} \Big(1+\f{|x|}{2^k}\Big)^{-N-\delta}  \lesssim 1,
 \end{align*}
which completes the estimate for term $II$. In conjunction with the estimate for $I$ (as encapsulated in the estimates for $I_1$ and $I_2$) we arrive at \eqref{eq:AO2b}. 

Thus, we have obtained \eqref{eq:AO1a}-\eqref{eq:AO2b}, concluding the proof of Lemma \ref{lem: AO}.
\end{proof}

\subsection{Molecular decomposition}\label{sec:decomp}
The molecular decomposition of Besov and Triebel spaces follows from their frame decompositions. 
\begin{Theorem}[Molecular decomposition]\label{th:decomp}
Let $\alpha\in\RR$, $0<q\le\infty,$ and $0<p<\infty$ if $A^{p,q}_{\alpha}(\LL)=F^{p,q}_\alpha(\LL)$ or $0<p\le \infty$ if $A^{p,q}_{\alpha}(\LL)=B^{p,q}_\alpha(\LL)$. Let $\mu \ge 1$, $(M,\theta)\in \{\NN_0\times (0,1)\}\cup \{(-1,1)\},$  $N\in \NN_0$ and $0\le \delta\le 1$. Then there exists a family of $(M,\theta,N,\delta,\mu)$-molecules $\{m_R\}_{R\in \E}$ such that for any $f\in A^{p,q}_\alpha(\LL)$ there is a sequence of scalars $\{s_R\}_{R\in\E}$ satisfying 
\begin{align}\label{eq:decomp1}
f=\sum_R s_R m_R \quad \text{in } \sz'(\RR^n)
\end{align}
and 
\begin{align}
\label{eq:decomp2}
\Vert s\Vert_{a^{p,q}_\alpha} \lesssim \Vert f\Vert_{A^{p,q}_\alpha}.
\end{align}
\end{Theorem}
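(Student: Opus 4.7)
The strategy is to exhibit the molecules directly as (multiples of) needlets coming from a dual admissible system, thereby reducing the statement to the frame decomposition of Theorem~\ref{th: frame} together with the observation (Lemma~\ref{lem: needlets}) that needlets are themselves molecules of any prescribed order.

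More precisely, the plan is the following. First I would fix an admissible pair $(\varphi_0,\varphi)$ and choose a second admissible pair $(\psi_0,\psi)$ such that the resulting systems $\{\varphi_j\}_{j\in\NN_0}$ and $\{\psi_j\}_{j\in\NN_0}$ satisfy the reproducing condition \eqref{CRF 1}; such a choice is standard (for instance, take $\psi_j$ supported where $\varphi_j$ is large enough and normalise accordingly). By Theorem~\ref{th: frame}(c), for every $f\in A^{p,q}_\alpha(\LL)$ one has the identity
\begin{equation*}
f \;=\; T_\psi \circ S_\varphi f \;=\; \sum_{R\in\E} \langle f,\varphi_R\rangle\,\psi_R \qquad\text{in }\sz'(\RR^n),
\end{equation*}
and Theorem~\ref{th: frame}(b) provides the sequence bound
\begin{equation*}
\bigl\Vert \{\langle f,\varphi_R\rangle\}_{R\in\E}\bigr\Vert_{a^{p,q}_\alpha} \;\lesssim\; \Vert f\Vert_{A^{p,q}_\alpha}.
\end{equation*}

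Next I would invoke Lemma~\ref{lem: needlets}, which asserts that each needlet $\psi_R$ is a constant multiple of an $(M,\theta,N,\delta,\mu)$-molecule for any allowed choice of the parameters; crucially, the proof there yields a constant $C=C(M,\theta,N,\delta,\mu,n)$ that is \emph{uniform in $R\in\E$ and in the level $j$}. Thus writing $\psi_R = C\,m_R$ with $m_R$ an $(M,\theta,N,\delta,\mu)$-molecule, and setting
\begin{equation*}
s_R \;:=\; C\,\langle f,\varphi_R\rangle \qquad (R\in\E),
\end{equation*}
gives the decomposition \eqref{eq:decomp1}, while the $a^{p,q}_\alpha$-quasi-norm homogeneity together with the bound above yields \eqref{eq:decomp2}.

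There is no real obstacle beyond book-keeping: the heart of the argument is already packaged in the frame decomposition (Theorem~\ref{th: frame}) and in the identification of needlets as molecules (Lemma~\ref{lem: needlets}). The only point demanding a moment's care is to verify that the construction of $(\psi_0,\psi)$ can be carried out so that \eqref{CRF 1} holds while still meeting all conditions of Definition~\ref{def: admissible}, and that the constant produced by Lemma~\ref{lem: needlets} is indeed uniform across all tiles and all scales; both are ensured by the normalisation built into the definition of needlets and by the uniform estimates $\tau_R\sim |R|$ and \eqref{eq:tile control} used in the proof of Lemma~\ref{lem: needlets}. Once these are in place, the convergence of the molecular sum in $\sz'(\RR^n)$ is inherited verbatim from the frame expansion.
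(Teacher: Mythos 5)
Your proposal is correct and follows the same route as the paper: both invoke the frame decomposition $f=\sum_{R}\ip{f,\varphi_R}\psi_R$ of Theorem~\ref{th: frame}, use Lemma~\ref{lem: needlets} to identify the needlets $\psi_R$ as uniform constant multiples of $(M,\theta,N,\delta,\mu)$-molecules, and then read off \eqref{eq:decomp2} from the boundedness of $S_\varphi$. Your extra remarks about constructing a dual pair satisfying \eqref{CRF 1} and about the uniformity of the constant in Lemma~\ref{lem: needlets} are just the implicit book-keeping the paper leaves tacit.
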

\begin{proof}[Proof of Theorem \ref{th:decomp}]
Let $\{\vp_j\}_{j\in \NN_0}$ and $\{\psi_j\}_{j\in\NN_0}$ be admissible systems satisfying \eqref{CRF 1}. By the frame decompositions  given in Theorem \ref{th: frame}, we have 
$$ f=\sum_{R\in \E}  \ip{f,\vp_R} \psi_R = \sum_{R\in \E} s_R m_R,$$
where $s_R=c^{-1}\ip{f,\vp_R}$ and $m_R=c\,\psi_R$. By Lemma \ref{lem: needlets},   $m_R$ is a $(M,\theta,N,\delta,\mu)$-molecule for some appropriate uniform constant $c$; this gives \eqref{eq:decomp1}. Secondly, by part (b) of Theorem \ref{th: frame}, we have
$$ \Vert s\Vert_{a^{p,q}_\alpha} =c^{-1}\Vert \{\ip{f,\vp_R}\}\Vert_{a^{p,q}_\alpha}=c^{-1} \Vert S_\vp f\Vert_{a^{p,q}_\alpha} \lesssim \Vert f\Vert_{A^{p,q}_\alpha},$$
which gives \eqref{eq:decomp2} and concludes our proof. 
\end{proof}

\subsection{Molecular synthesis}\label{sec:synth}

We next state and prove molecular synthesis estimates. Recall that the notation $n_{p,q}$ has been defined in Section \ref{sec: spaces}.

\begin{Theorem}[Molecular synthesis]\label{th:synth}
Let $\alpha\in\RR$, $0<q\le\infty,$ and $0<p<\infty$ if $A^{p,q}_{\alpha}(\LL)=F^{p,q}_\alpha(\LL)$ or $0<p\le \infty$ if $A^{p,q}_{\alpha}(\LL)=B^{p,q}_\alpha(\LL)$. Suppose $\{m_R\}_{R\in\E}$ is a collection of $(M,\theta, N, \delta, \mu)$-molecules satisfying
\begin{enumerate}[\upshape(i)]
\item $M\ge \max(\floor{n_{p,q}-n-\alpha},-1),$
\item $\theta>\left
\lbrace 
	\begin{array}{ll}
			\max\{n_{p,q}^*, (n_{p,q}-\alpha)^*\}  \qquad &\text{if}\quad n_{p,q}-n-\alpha\ge 0,\\
			0\qquad &\text{if}\quad n_{p,q}-n-\alpha<0,
	\end{array}
\right. $
\item $N\ge \max\{\floor{\alpha},0\},$
\item $\delta>\alpha^* $ if $\alpha\ge0 $,
and $\delta\ge 0 $ if $\alpha<0,$
\item $\mu>\max\{n_{p,q}, n+M+\theta\}.$
\end{enumerate}
Then for any sequence of numbers $s=\{s_R\}_{R\in\E}\in a^{p,q}_{\alpha}(\LL)$, we have
$$ \Big\Vert \sum_{R\in \E} s_R m_R \Big\Vert_{A^{p,q}_\alpha}\lesssim \Vert s\Vert_{a^{p,q}_{\alpha}}.$$
\end{Theorem}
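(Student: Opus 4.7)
The plan is to apply an admissible Littlewood--Paley system $\{\varphi_j\}_{j\in\NN_0}$ to $f := \sum_R s_R m_R$ and then combine the almost-orthogonality estimate of Lemma~\ref{lem: AO} with a tile-adapted maximal-function majorization. After verifying convergence of the series in $\sz'(\RR^n)$ (by testing against a Schwartz function and using Definition~\ref{def: molecules}(i) to obtain absolute convergence of the pairing), I would interchange $\varphi_j(\sqrt{\LL})$ with the sum to obtain
\begin{align*}
|\varphi_j(\sqrt{\LL}) f(x)| \le \sum_{k\in\NN_0}\sum_{R\in\E_k} |s_R|\, |\varphi_j(\sqrt{\LL}) m_R(x)|.
\end{align*}
Hypothesis~(v) permits a choice of $\eta$ with $\max\{n_{p,q}, n+M+\theta\} < \eta < \mu$, and Lemma~\ref{lem: AO} then yields
\begin{align*}
|\varphi_j(\sqrt{\LL}) f(x)| \lesssim \sum_{k\in\NN_0} 2^{-(n+M+\theta)(k-j)_+ - (N+\delta)(j-k)_+}\, T_{j,k}(x),
\end{align*}
where $T_{j,k}(x) := \sum_{R\in\E_k} |R|^{-1/2}|s_R|(1+2^{j\wedge k}|x-x_R|)^{-\eta}$.

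The next step is to dominate $T_{j,k}$ by a Hardy--Littlewood maximal function of $g_k := \sum_{R\in\E_k} |R|^{-1/2}|s_R|\Ind_R$. Using the tile geometry, and in particular the uniform sub-cube decomposition of Lemma~\ref{lem: tiles}(d), for any $t$ with $n/\eta < t \le \min\{1,p,q\}$ (or $\min\{1,p\}$ in the Besov case) one obtains
\begin{align*}
T_{j,k}(x) \lesssim 2^{(k-j)_+ n/t} \big(\MM(g_k^t)(x)\big)^{1/t}.
\end{align*}
The factor $2^{(k-j)_+ n/t}$ accounts for the mismatch when $k>j$ between the kernel scale $2^{-j}$ and the denser tile scale $\sim 2^{-k}$. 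Substituting and re-indexing the weight via $2^{j\alpha} = 2^{k\alpha}\cdot 2^{(j-k)\alpha}$ yields
\begin{align*}
2^{j\alpha}|\varphi_j(\sqrt{\LL}) f(x)| \lesssim \sum_{k\in\NN_0} \omega_{j,k}\, 2^{k\alpha}\big(\MM(g_k^t)(x)\big)^{1/t},
\end{align*}
with $\omega_{j,k} = 2^{-(k-j)(n+M+\theta-n/t+\alpha)}$ for $k\ge j$ and $\omega_{j,k} = 2^{-(j-k)(N+\delta-\alpha)}$ for $k<j$.

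Hypotheses~(i)--(iv) are calibrated so that, for $t$ chosen slightly below $n/n_{p,q}$, both exponents in $\omega_{j,k}$ are strictly positive; hence $\omega_{j,k}$ defines the kernel of a discrete convolution on $\ZZ$ bounded on $\ell^q$ for every $q\in(0,\infty]$ (after the standard $\min(1,q)$-power trick when $q<1$). Applying this convolution estimate, and then taking $L^p(\ell^q)$-norms in the Triebel--Lizorkin case (resp.\ $\ell^q L^p$-norms in the Besov case), reduces the bound to the Fefferman--Stein vector-valued maximal inequality in $L^{p/t}(\ell^{q/t})$, valid since $t<\min\{p,q\}$ (resp.\ the scalar Hardy--Littlewood maximal theorem in $L^{p/t}$, valid since $t<p$). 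By the definition of the sequence-space norms, the right-hand side equals a constant times $\|s\|_{a^{p,q}_\alpha}$, completing the argument.

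The main obstacle is the sharp bookkeeping that links (i)--(iv) to the strict positivity of the two exponents in $\omega_{j,k}$: the integer parts $\floor{n_{p,q}-n-\alpha}$ and $\floor{\alpha}$ in (i) and (iii) come from the leading-order decay requirement, while the fractional conditions in (ii) and (iv) arise precisely because $t$ must be chosen strictly below $\min\{1,p,q\}$ --- the $M+\theta$ cancellation must beat the $n/t-n$ loss on the $k>j$ side, and the $N+\delta$ regularity must beat $\alpha$ on the $k<j$ side. A secondary technical point is establishing the tile-sum-to-maximal-function estimate in the Hermite setting, where tiles in $\E_k$ are not uniformly dyadic but may be elongated off the diagonal; this forces one to argue via the uniform sub-cubes of Lemma~\ref{lem: tiles}(d) rather than with the tiles themselves.
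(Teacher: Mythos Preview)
Your proposal is correct and follows essentially the same route as the paper: almost-orthogonality (Lemma~\ref{lem: AO}), then the tile-adapted maximal bound (isolated in the paper as Lemma~\ref{lem: maximal2}, proved via exactly the sub-cube decomposition you identify), then a discrete convolution estimate on $\ZZ$ with exponentially decaying kernel, then Fefferman--Stein. The only cosmetic difference is in the bookkeeping of the auxiliary parameters---the paper takes $n_{p,q}<n/r<\eta<\min\{n+M+\theta+\alpha,\mu\}$ rather than your $\eta>n+M+\theta$, which slightly streamlines the check that both exponents in $\omega_{j,k}$ are strictly positive---but your parameter choices work equally well once $t$ is selected with $n/t<n+M+\theta+\alpha$.
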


\begin{Remark}\label{rem:synth}
\begin{enumerate}[(i)]
\item Note that our hypotheses on $M, \theta ,N$ and $\delta$ ensure that 
\begin{align}\label{eq: NM conditions}
N+\delta>\alpha \qquad\text{and}\qquad n+M+\theta+\alpha>n_{p,q}.
\end{align}
In fact, it can be seen from its proof that Theorem \ref{th:synth} holds if conditions (i)-(iv) are replaced by the weaker inequalities in \eqref{eq: NM conditions}. This fact will be used in the proofs of our results on Hermite pseudo-multipliers in Section \ref{sec: PDOs}. 
 \item The following are examples of minimal conditions on $(M,\theta,N,\delta,\mu)$ in Theorem \ref{th:synth} for some special cases of $\alpha, p$ and $q$. 
 
 \begin{enumerate}[1.]
 \item {$\alpha>0$, $\min\{p,q\}\ge1$:}  $n_{p,q}=n,$  $M=-1$, $\theta=1$, $N=\floor{\alpha}$, $\delta>\alpha^*$ and $\mu>n$. 
  \item {$\alpha=0$, $\min\{p,q\}\ge1$:} $n_{p,q}=n,$  $M=0$, $\theta\in (0,1)$, $N=0$, $\delta>0$ and $\mu>n+\theta$.
   \item {$\alpha<0$, $p,q>0$}:  $M=\floor{n_{p,q}-n-\alpha}$, $\theta>\max\{n_{p,q}^*, (n_{p,q}-\alpha)^*\}$, $N=0$, $\delta=0$, $\mu>\floor{n_{p,q}-\alpha}+\theta$. 
   \item {$\alpha>n_{p,q}-n$, $p,q>0$}: $M=-1$, $\theta=1$, $N=\floor{\alpha}$, $\delta>\alpha^*$, $\mu>n_{p,q}$.
   \item {$0\le \alpha\le n_{p,q}-n$, $p,q>0$}: $M=\floor{n_{p,q}-n-\alpha}$, $\theta>\max\{n_{p,q}^*, (n_{p,q}-\alpha)^*\}$, $N=\floor{\alpha}$, $\delta>\alpha^*$, $\mu>\floor{n_{p,q}}+\theta$. 
 \end{enumerate}
 
 \medskip
 
\noindent Examples of spaces corresponding to the cases described  include Sobolev type spaces for case 1, $L^p$ spaces for case 2, and Hardy type spaces for case 5.
\end{enumerate}
\end{Remark}

 The proof of Theorem \ref{th:synth} requires certain inequalities involving a maximal operator, which we next present. 
For each $s>0$ and for a locally integrable function $f$ on $\RR^n,$ we define
\begin{align}\label{mf}
	\MM_s f(x)=\sup_{x\in Q}\Big(\aver{Q} |f(y)|^s\,dy\Big)^{1/s}\quad \forall x\in \RR^n,
\end{align}
where the supremum is taken over all cubes $Q\subset\RR^n$ with sides parallel to the axes that contain $x.$ The reader may observe that  $\MM_s$ coincides with the usual Hardy--Littlewood maximal operator for $s=1$. The maximal operator $\MM_s$ satisfies the  following well-known inequality. For the case $s=1$ one may consult \cite{MR1232192}, from which the general case follows readily.
\begin{Lemma}[Fefferman--Stein inequality]\label{fefferman-stein B}
If $0<p<\infty,$ $0<q<\infty$ and $0<s<\min\{p,q\},$ it holds that
\begin{align*}
	\Big\Vert \Big(\sum_{j\in \NN}\big|\MM_s(f_j)\big|^q\Big)^{1/q}\Big\Vert_{L^p} \le C\Big\Vert\Big(\sum_{j\in\NN} |f_j|^q\Big)^{1/q}\Big\Vert_{L^p}
\end{align*}
for any sequence $\{f_j\}_{j\in \NN}$ of locally integrable functions defined on $\RR^n.$
\end{Lemma}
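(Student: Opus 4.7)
The plan is to reduce the claim to the classical (i.e., $s=1$) Fefferman--Stein vector-valued maximal inequality via a standard rescaling trick, as hinted by the remark following the statement. The key observation is that for any locally integrable $f$ and any $s>0$ one has the pointwise identity
\begin{align*}
	\MM_s f(x) = \bigl(\MM_1(|f|^s)(x)\bigr)^{1/s},
\end{align*}
which follows directly from the definition of $\MM_s$ in \eqref{mf}.

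With this identity in hand, I would set $g_j = |f_j|^s$ for each $j\in\NN$, so that $(\MM_s f_j)^q = (\MM_1 g_j)^{q/s}$ and $|f_j|^q = g_j^{q/s}$. Then raising the desired inequality to the $s$-th power is equivalent to establishing
\begin{align*}
	\Bigl\Vert \Bigl(\sum_{j\in \NN} (\MM_1 g_j)^{q/s}\Bigr)^{s/q}\Bigr\Vert_{L^{p/s}} \le C\Bigl\Vert\Bigl(\sum_{j\in\NN} g_j^{q/s}\Bigr)^{s/q}\Bigr\Vert_{L^{p/s}}.
\end{align*}
Because $0<s<\min\{p,q\}$, both $p/s$ and $q/s$ lie in the range $(1,\infty)$, so this is precisely the classical Fefferman--Stein vector-valued maximal inequality for the Hardy--Littlewood maximal operator $\MM_1$ acting on the sequence $\{g_j\}_{j\in\NN}$ of nonnegative locally integrable functions, with indices $p/s$ and $q/s$. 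This inequality is the result recalled (in equivalent form) in \cite{MR1232192}, so one may invoke it directly to conclude.

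There is no substantive obstacle here: the only subtlety is verifying that $\{g_j\}$ consists of locally integrable functions so that $\MM_1 g_j$ is well defined almost everywhere, which follows from the fact that $\MM_s f_j$ is assumed finite (otherwise the right-hand side of the claimed inequality is infinite and there is nothing to prove). Taking $s$-th roots of the above display then returns the stated inequality, with constant $C$ equal to the $s$-th root of the constant in the classical Fefferman--Stein theorem at exponents $(p/s,q/s)$.
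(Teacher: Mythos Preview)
Your reduction is correct and matches the paper's approach: the paper does not give a detailed proof but simply cites the classical $s=1$ Fefferman--Stein inequality from \cite{MR1232192} and notes that the general case follows readily, which is exactly the rescaling argument you supply. The only remark is that the local integrability of $g_j=|f_j|^s$ is not really needed, since the Hardy--Littlewood maximal function is well defined (possibly $+\infty$) for any nonnegative measurable function and the classical Fefferman--Stein inequality applies in that generality.
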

We next state and prove a  lemma involving sequences of numbers and the maximal operator that is an extension of  \cite[Lemma 4]{MR2399106}.
\begin{Lemma}\label{lem: maximal2}
Let $r>0,$  $\eta>\f{n}{\min\{1,r\}}$ and $j,k\in \NN_0.$ Given a sequence of numbers $\{a_R\}_{R\in\E_k},$  set
$$ a^*_k(x)=\sum_{R\in\E_k}\f{|a_R|}{(1+2^{j\land k}|x-x_R|)^\eta}\qquad \forall x\in\RR^n.$$
Then it holds that
$$ a^*_k(x)\lesssim 2^{\f{n}{1\land r} ((k-j)\vee 0)}\MM_r\Big(\sum_{R\in\E_k} |a_R|\Ind_R\Big)(x) \quad \forall x\in \RR^n,$$
where the implicit constant  is independent of $k$ and $j.$
\end{Lemma}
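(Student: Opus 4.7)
The plan is to extend the proof of \cite[Lemma 4]{MR2399106} by reducing the estimate to a standard maximal function bound, using the \emph{inscribed cubes} supplied by Lemma \ref{lem: tiles}(b) to sidestep the irregular sizes of Hermite tiles. The key observation is that although a tile $R \in \E_k$ may have measure as large as $2^{-nk/3}$ off the diagonal, it always contains a cube $Q_R = Q(x_R, c_1 2^{-k})$ of regular size $|Q_R| \sim 2^{-nk}$, on which the function $f := \sum_{R' \in \E_k} |a_{R'}| \mathbf{1}_{R'}$ takes the constant value $|a_R|$.

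First I would partition the tiles according to their proximity to $x$: for $l \geq 0$, set
\[
\mathcal{A}_l = \{R \in \E_k : 2^l \le 1 + 2^{j \wedge k}|x - x_R| < 2^{l+1}\},
\]
so that $a^*_k(x) \le \sum_{l \ge 0} 2^{-l\eta}\sum_{R \in \mathcal{A}_l} |a_R|$. For each $R \in \mathcal{A}_l$, the node satisfies $x_R \in Q(x, 2^{l+1 - (j \wedge k)})$, and hence $Q_R$ lies inside the cube $Q_l^\ast := Q(x, C (2^{l-(j\wedge k)} + 2^{-k}))$. Since $l \ge 0$, one checks in each of the cases $k \le j$ and $k > j$ that $|Q_l^\ast| \lesssim 2^{n(l - (j \wedge k))}$. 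Since the $Q_R$ are pairwise disjoint (being subsets of disjoint tiles) and $f = |a_R|$ on $Q_R$, I would obtain
\[
\sum_{R \in \mathcal{A}_l} |a_R|^r |Q_R| = \int_{\bigcup_R Q_R} f^r \, dy \le |Q_l^\ast| \, \MM_r(f)(x)^r,
\]
which, combined with $|Q_R| \sim 2^{-nk}$, yields $\sum_{R \in \mathcal{A}_l} |a_R|^r \lesssim 2^{n(l + (k - j) \vee 0)} \MM_r(f)(x)^r$. The same reasoning with coefficients replaced by $1$ gives the cardinality bound $|\mathcal{A}_l| \lesssim 2^{n(l + (k-j) \vee 0)}$.

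To convert the $\ell^r$ estimate into an $\ell^1$ estimate I would distinguish two cases. For $0 < r \le 1$, the subadditivity inequality $\sum c_R \le (\sum c_R^r)^{1/r}$ directly gives $\sum_{R \in \mathcal{A}_l} |a_R| \lesssim 2^{n(l + (k-j)\vee 0)/r} \MM_r(f)(x)$. For $r > 1$, H\"older's inequality together with the cardinality bound yields $\sum_{R \in \mathcal{A}_l} |a_R| \lesssim 2^{n(l + (k-j) \vee 0)} \MM_r(f)(x)$. In both cases the exponent on the right is $n(l + (k-j)\vee 0)/(1 \wedge r)$. Summing the geometric series in $l$ then gives
\[
a^\ast_k(x) \lesssim 2^{n ((k-j) \vee 0)/(1 \wedge r)} \MM_r(f)(x) \sum_{l \ge 0} 2^{l (n/(1\wedge r) - \eta)},
\]
and the hypothesis $\eta > n/\min\{1, r\}$ ensures the sum converges, completing the proof.

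The principal technical subtlety is choosing the right replacement for the irregularly-shaped tiles; working with the inscribed cubes $Q_R$ rather than with the tiles themselves reduces the argument to essentially the dyadic case treated in \cite[Lemma 4]{MR2399106}. Using the tiles directly would introduce an extra factor of up to $2^{2nk/3}$ coming from the off-diagonal tile widths, whereas $Q_R$ has uniform size, making the Fefferman--Stein-style control by $\MM_r$ clean.
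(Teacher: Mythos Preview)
Your argument is correct and actually streamlines the paper's proof. The paper treats the regimes $k\le j$ and $k>j$ separately: for $k\le j$ it simply cites \cite[Lemma~4]{MR2399106}, while for $k>j$ it further splits according to whether $|x|_\infty$ exceeds $2(c_3+c_4)2^k$ and, in the near case, passes to the full subcube partition $\widehat{\E}_k$ of Lemma~\ref{lem: tiles}(d). Your approach handles both regimes at once by working with a \emph{single} inscribed cube $Q_R=Q(x_R,c_12^{-k})$ per tile rather than the whole subcube partition; disjointness of the $Q_R$ (inherited from disjointness of the tiles) together with the uniform volume $|Q_R|\sim 2^{-kn}$ is all that is needed to run the shell decomposition and the maximal function comparison. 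This buys you a cleaner, case-free proof, at no cost: the factor $2^{n((k-j)\vee0)/(1\wedge r)}$ falls out automatically from the identity $k-(j\wedge k)=(k-j)\vee0$. The only step worth annotating in a final write-up is the passage from the Euclidean bound $|x-x_R|<2^{l+1-(j\wedge k)}$ to the cube containment $Q_R\subset Q_l^\ast$, which uses $|\cdot|_\infty\le|\cdot|$; this is routine but deserves a word since the paper's cubes are defined via $|\cdot|_\infty$.
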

\begin{proof}

For $k\le j$ one may apply \cite[Lemma 4]{MR2399106} directly to obtain
$$a^*_k(x)\lesssim \MM_r\Big(\sum_{R\in\E_k}|a_R|\Ind_R\Big)(x).$$

For the case $k>j,$ we proceed as in the proof of \cite[Lemma 4]{MR2399106} with 
$$ \wt{a}_k(x)=\sum_{R\in\E_k}\f{|a_R|}{(1+2^j d(x,R))^\eta},$$
where $d(x,R) = \inf_{y\in R}\Vert x-y\Vert_{\ell^\infty};$ note that  
$a^*_k(x)\lesssim\wt{a}_k(x)$ for all $x\in\RR^n.$
Let  $c_3$ and $c_4$  be the constants from Lemma \ref{lem: tiles}. We consider two cases.

\bigskip

\underline{Case 1: $|x|_\infty > 2(c_3+c_4)2^k.$}

\medskip

For each $R\in\E_k$  we have $d(x,R)>|x|_\infty/2$ by part (c) of Lemma~\ref{lem: tiles}  and the assumption on $|x|_{\infty}.$ Set $\nu=1-\min(1,1/r)$. Recalling that $\eta>{n}/{\min\{1,r\}},$ and using  the fact that $\# \E_k \sim 4^{kn}$ along with H\"older's inequality if $r>1$ or the triangle inequality if $r\le 1$, we have
\begin{align*}
\wt{a}_k(x)
&\le \sum_{R\in\E_k} \f{|a_R|}{(1+2^jd(x,R))^{\f{n}{1\land r}}}\\
&\lesssim 2^{(k-j)\f{n}{1\land r}} \Big(\f{2^{-k}}{|x|_\infty}\Big)^{\f{n}{1\land r}} \sum_{R\in\E_k} |a_R| \\
&\lesssim 2^{(k-j)\f{n}{1\land r}} \Big(\f{2^{-k}}{|x|_\infty}\Big)^{\f{n}{1\land r}}4^{kn\nu} \Big(\sum_{R\in\E_k} |a_R|^r\Big)^{1/r}.
\end{align*}
Let $Q_x=Q(0,2|x|_\infty);$ then  $x\in Q_x$ and $\bigcup_{R\in\E_k}R\subseteq Q_x$ by part (c) of Lemma~\ref{lem: tiles} and the fact that $|x|_\infty>c_3 2^k$. Invoking  H\"older's inequality with $1/r$ if $r<1$  or the triangle inequality if $r\ge 1$, and using that $|R|\gtrsim 2^{-kn}$ for every tile $R\in\E_k$ by part (b) of Lemma~\ref{lem: tiles}, we obtain
\begin{align*}
\Big(\sum_{R\in\E_k} |a_R|^r\Big)^{1/r} 
&\le   |Q_x|^{1/r}\Big\{\aver{Q_x} \Big(\sum_{R\in\E_k} |a_R| |R|^{-1/r}\Ind_R(y)\Big)^r\,dy\Big\}^{1/r} \\
&\lesssim  (2^{k}|x|_\infty)^{n/r} \MM_r\Big(\sum_{R\in\E_k} |a_R| \Ind_R\Big)(x).
\end{align*}
Inserting this estimate into the previous calculation and applying the assumption $|x|_\infty\gtrsim 2^k$ we obtain
\begin{align*}
\wt{a}_k(x) 
&\lesssim  2^{(k-j)\f{n}{1\land r}}  2^{-2k(\f{n}{1\land r}-n\nu-\f{n}{r})}\MM_r\Big(\sum_{R\in\E_k} |a_R| \Ind_R\Big)(x).
\end{align*}
By considering $r\le 1$ and $r>1$ separately, we see that $2^{-2k(\f{n}{1\land r}-n\nu-\f{n}{r})}=1$, completing the proof of case 1. 

\bigskip

\underline{Case 2: $|x|_\infty \le 2(c_3+c_4)2^k.$}

\medskip

Let $\widehat{\E}_k$ be the collection of cubes defined in \eqref{hermite cubes}. For each $Q\in\widehat{\E}_k$ we set $a_Q=a_R$ whenever $Q\subset R$. We have
\begin{align}\label{maximal 1}
	\widetilde{a}_k(x)\le \sum_{Q\in\widehat{\E}_k} \f{|a_Q|}{(1+2^jd(x,Q))^\eta}
	\quad \text{and}\quad
	\sum_{R\in\E_k}|a_R|\Ind_R = \sum_{Q\in\widehat{\E}_k}|a_Q|\Ind_Q.
\end{align}
For $m\ge 1,$ define
\begin{align*}
\AM_0&=\AM_0(x,k,j)=\{Q\in\widehat{\E}_k: |x-x_Q|_\infty\le c_4 2^{-j}\}, \\
\AM_m &= \AM_m(x,k,j)=\{Q\in\widehat{\E}_k: c_4 2^{m-j-1}<|x-x_Q|_\infty\le  c_4 2^{m-j}\}.
\end{align*}
For $m\ge 0,$ set
\begin{align*}
\B_m&=\B_m(x,j)= Q(x,c_42^{m+1-j}).
\end{align*}
Note that these sets satisfy the following properties:
\begin{align}\label{maximal 2}
 \# \AM_m \lesssim 2^{(m-j+k)n}, 
 &&
 \widehat{\E}_k = \bigcup\limits_{m\ge 0} \AM_m,
 &&
 \bigcup_{Q\in\AM_m} Q\subseteq \B_m.
 \end{align}
 The first inequality in \eqref{maximal 2} holds because
 $$\#\AM_m \sim \f{|\cup_{Q\in \AM_m} Q|}{2^{-kn}}\le \f{|\B_m|}{2^{-kn}}\sim \f{2^{(m-j)n}}{2^{-kn}},$$
 where we have used that the cubes in $\AM_m$ are disjoint and have measure comparable to $2^{-kn}.$
Using \eqref{maximal 1}, the fact that $d(x,Q)\sim 2^{m-j}$ whenever $m\ge 2$ (recall that $j<k$ and part (d) of Lemma~\ref{lem: tiles}), and either the $r$-H\"older inequality  along with the first property in \eqref{maximal 2} if $r>1$ or the triangle inequality otherwise, we have
\begin{align*}
\wt{a}_k(x)\
\le \sum_{m\ge 0}\sum_{Q\in\AM_m} \f{|a_Q|}{(1+2^jd(x,Q))^\eta} 
\lesssim \sum_{m\ge 0} 2^{-m\eta+(m-j+k)n\nu} \Big(\sum_{Q\in\AM_m}|a_Q|^r\Big)^{1/r}.
\end{align*}
From the last property in \eqref{maximal 2} and the $1/r$-H\"older or triangle inequality as appropriate,  for each $m\ge 0,$ we have 
\begin{align*}
 \Big(\sum_{Q\in\AM_m}|a_Q|^r\Big)^{1/r}
 &\le \Big(\int_{\B_m} \sum_{Q\in\AM_m} |a_Q|^r |Q|^{-1} \Ind_Q(y)\,dy\Big)^{1/r} \\
 &\le |\B_m|^{1/r}\Big(\aver{\B_m} \Big(\sum_{Q\in\AM_m} |a_Q| |Q|^{-1/r} \Ind_Q(y)\,dy\Big)^r\Big)^{1/r} \\
 &\lesssim 2^{(m+k-j)\f{n}{r}} \MM_r\Big(\sum_{R\in\E_k} |a_R|\Ind_R\Big)(x),
\end{align*}
where in the last inequality we applied the estimates $|\B_m|\sim 2^{(m-j)n}$ and $|Q|\sim 2^{-kn}$, the fact that $\B_m$ contains $x$ and \eqref{maximal 1}. Combining the previous two calculations gives
\begin{align*}
\wt{a}_k(x)
&\lesssim 2^{(k-j)\f{n}{1\land r}}\MM_r\Big(\sum_{R\in\E_k} |a_R|\Ind_R\Big)(x) \sum_{m\ge 0}2^{-m(\eta-n\nu-\f{n}{r})}.
\end{align*}
Since the assumption $\eta>\f{n}{1\land r}$ ensures that the sum is finite, the  proof of the Lemma is finished. 
\end{proof}

We turn  to the proof of Theorem \ref{th:synth}. In the rest of this section, for a given sequence of numbers $\{s_R\}_{R\in\E}$, $\alpha\in\RR$ and $x\in\RR^n$ we set
$$ s_k(\alpha,x) =2^{k\alpha}\sum_{R\in\E_k}|s_R||R|^{-1/2}\Ind_R(x).$$
\begin{proof}[Proof of Theorem \ref{th:synth}] We separate the proof in two cases, one for the Triebel--Lizorkin spaces and another for the Besov spaces.

\bigskip

\underline{Case $(A^{p,q}_\alpha(\LL),a^{p,q}_\alpha(\LL))=(F^{p,q}_\alpha(\LL),f^{p,q}_\alpha(\LL))$.}

\medskip

Let $\eta>0$ and $0<r<1$ be such that
\begin{align}\label{eq:synth1}
\min\{n+M+\theta+\alpha, \mu\}>\eta > \f{n}{r} >n_{p,q}.
\end{align}
This is possible because of our hypotheses on $M, \theta,$ $\mu$ and $\alpha.$ 

Set $n_r=n/r$. Using the hypothesis on $\mu,$  \eqref{eq:synth1} and Lemma \ref{lem: AO}  in the second inequality, and  using   \eqref{eq:synth1} and Lemma \ref{lem: maximal2}  in the third inequality, we obtain
\begin{align*}
&\Big\Vert \sum_R s_R m_R \Big\Vert_{F^{p,q}_\alpha} \\
&\qquad\le \Big\Vert \Big\{\sum_{j\ge 0}\Big(2^{j\alpha}\sum_{k\ge 0}\sum_{R\in\E_k}|s_R|\, |\vp_j(\sqrt{\LL}) m_R |\Big)^q\Big\}^{1/q}\Big\Vert_{L^p} \\
&\qquad\le \Big\Vert \Big\{\sum_{j\ge 0}\Big(2^{j\alpha}\sum_{k\ge 0}\sum_{R\in\E_k}|s_R|\, |R|^{-1/2} \f{2^{-(n+M+\theta)[(k-j)\vee 0]-(N+\delta)[(j-k)\vee 0]}}{(1+2^{j\land k}|\cdot-x_R|)^{\eta}}  \Big)^q\Big\}^{1/q}\Big\Vert_{L^p}  \\
&\qquad\lesssim \Big\Vert \Big\{\sum_{j\ge 0}\Big(2^{j\alpha}\sum_{k\ge 0}2^{-(n+M+\theta-n_r)[(k-j)\vee 0]-(N+\delta)[(j-k)\vee 0]} \MM_r\Big(\sum_{R\in\E_k} |s_R|\, |R|^{-1/2} \Ind_R\Big) \Big)^q\Big\}^{1/q}\Big\Vert_{L^p} \\
&\qquad= \Big\Vert \Big\{\sum_{j\ge 0}\Big(\sum_{k\ge 0}2^{\alpha(j-k)+(n+M+\theta-n_r)[(j-k)\land 0]-(N+\delta)[(j-k)\vee 0]} \MM_r\big(s_k(\alpha,\cdot)\big) \Big)^q\Big\}^{1/q}\Big\Vert_{L^p}\\
&\qquad=\Big\Vert \Big\{\sum_{j\ge 0}\Big(\sum_{k\ge 0}a_{j-k}b_k(\cdot) \Big)^q\Big\}^{1/q}\Big\Vert_{L^p},
\end{align*}
where, for $j\in\ZZ$, 
\begin{align*}
a_j=2^{j\alpha+(j\land 0)(n+M+\theta-n_r)-(j\vee 0)(N+\delta)}\quad \quad\text{and}\quad \quad
b_j(x)=\MM_r\big(s_j(\alpha,\cdot)\big)(x) \Ind_{j\ge 0}(j).
\end{align*}
Define $a=\{a_j\}_{j\in \ZZ}$ and $b(x)=\{b_j(x)\}_{j\in \ZZ};$
note that for any $t>0,$ it holds that 
\begin{align}\label{eq:asum}
\Vert a\Vert_{\ell^t}<\infty.
\end{align}
  Indeed, we have
\begin{align*}
	\Vert a\Vert_{\ell^t}^t
	= \sum_{j\in\ZZ} 2^{[j\alpha+(j\land 0)(n+M+\theta-n_r)-(j\vee 0)(N+\delta)]t} 
	= \sum_{j\ge 0} 2^{-j(N+\delta-\alpha)t}+ \sum_{j<0}2^{j(n+M+\theta-n_r+\alpha)t}.
\end{align*}
Then first sum converges because $N+\delta>\alpha$ by our hypotheses on $N$ and $\delta$, and the second sum converges because of  \eqref{eq:synth1}.

We next use estimate \eqref{eq:asum} with $t=1\land q$ and Young's inequality with exponent $q$ if $q\ge 1$ or the $q$-triangle inequality with Young's inequality with exponent 1 if $q<1$. This gives
\begin{align*}
\Big\Vert \sum_{R\in\E} s_R m_R \Big\Vert_{F^{p,q}_\alpha}
\lesssim \big\Vert \Vert a\Vert_{\ell^{1\land q}}\Vert b(\cdot)\Vert_{\ell^q}\big\Vert_{L^p} 
\lesssim \big\Vert\Vert b(\cdot)\Vert_{\ell^q}\big\Vert_{L^p}  
= \Big\Vert \Big(\sum_{j\ge 0} \big(\MM_r\big(s_j(\alpha,\cdot)\big)\big)^q\Big)^{1/q} \Big\Vert_{L^p}.
\end{align*}
Inequality \eqref{eq:synth1} and Lemma \ref{fefferman-stein B} lead to
\begin{align*}
\Big\Vert \sum_{R\in\E} s_R m_R \Big\Vert_{F^{p,q}_\alpha}
\lesssim \Big\Vert \Big(\sum_{j\ge 0} s_j(\alpha,\cdot)^q\Big)^{1/q} \Big\Vert_{L^p}  
= \Vert s\Vert_{f^{p,q}_\alpha}.
\end{align*}
This concludes the proof  for the Triebel--Lizorkin spaces. 

\bigskip

\underline{Case $(A^{p,q}_\alpha(\LL),a^{p,q}_\alpha(\LL))=(B^{p,q}_\alpha(\LL),b^{p,q}_\alpha(\LL))$.}

\medskip

Let $\eta>0$ and $0<r<1$ be such that
\begin{align}\label{eq:synth2}
\min\{n+M+\theta+\alpha,\mu\} > \eta> \f{n}{r}>n_{p,q},
\end{align}
which is possible because of our hypotheses on $M, \theta$ and $\mu$. 

As in the previous case we set $n_r=n/r$. Using the hypothesis on $\mu,$ \eqref{eq:synth2} and  Lemma \ref{lem: AO} in the first inequality, and using \eqref{eq:synth2} and Lemma \ref{lem: maximal2} in the second inequality, leads to
\begin{align*}
&\Big\Vert \sum_R s_R m_R \Big\Vert_{B^{p,q}_\alpha}\\
&\lesssim \Big\{\sum_{j\ge 0} \Big(2^{j\alpha} \Big\Vert \sum_{k\ge 0}\sum_{R\in\E_k}|s_R|\, |R|^{-1/2} \f{2^{-(n+M+\theta)[(k-j)\vee 0]-(N+\delta)[(j-k)\vee 0]}}{(1+2^{j\land k}|\cdot-x_R|)^{\eta}} \Big\Vert_{L^p}\Big)^q\Big\}^{1/q}\\
&\lesssim \Big\{\sum_{j\ge 0}\Big(2^{j\alpha} \Big\Vert\sum_{k\ge 0}2^{-(n+M+\theta-n_r)[(k-j)\vee 0]-(N+\delta)[(j-k)\vee 0]} \MM_r\Big(\sum_{R\in\E_k} |s_R|\, |R|^{-1/2} \Ind_R\Big) \Big\Vert_{L^p}\Big)^q\Big\}^{1/q}\\
&=  \Big\{\sum_{j\ge 0}\Big\Vert\sum_{k\ge 0}2^{\alpha(j-k)+(n+M+\theta-n_r)[(j-k)\land 0]-(N+\delta)[(j-k)\vee 0]} \MM_r\big(s_k(\alpha,\cdot)\big) \Big\Vert_{L^p}^q\Big\}^{1/q}\\
&\le  \Big\{\sum_{j\ge 0}\Big(\sum_{k\ge 0}\Big[ 2^{\alpha(j-k)+(n+M+\theta-n_r)[(j-k)\land 0]-(N+\delta)[(j-k)\vee 0]} \big\Vert\MM_r\big(s_k(\alpha,\cdot)\big) \big\Vert_{L^p}\Big]^{1\land p}\Big)^{\f{q}{1\land p}}\Big\}^{1/q}\\
&= \Big\{\sum_{j\ge 0}\Big(\sum_{k\ge 0}| a_{j-k}b_k|^{1\land p}\Big)^{\f{q}{1\land p}}\Big\}^{1/q},
\end{align*}
where
\begin{align*}
a_j=2^{j\alpha+(j\land 0)(n+M+\theta-n_r)-(j\vee 0)(N+\delta)}\quad \quad\text{and}\quad\quad
b_j =\big\Vert\MM_r\big(s_j(\alpha,\cdot)\big)\big\Vert_{L^p} \Ind_{j\ge 0}(j)
\end{align*}
for $j\in\ZZ;$ in the next to the last line we applied  Minkowski's inequality for infinite sums if $p\ge 1$ or the $p$-triangle inequality if $p<1$.

Set $a=\{a_j\}_{j\in \ZZ}$ and $b=\{b_j\}_{j\in \ZZ};$  by  applying \eqref{eq:asum} with $t=1\land \f{q}{1\land p}$ and  $a^{1\land p}$ in place of $a,$ we have
\begin{align}\label{eq:asum2}
	\Vert a^{1\land p}\Vert_{\ell^{1\land \f{q}{1\land p}}} <\infty.
\end{align}

Applying  Young's inequality with exponent $\f{q}{1\land p}$ if $\f{q}{1\land p}\ge 1$ or the $\f{q}{1\land p}$-triangle inequality with Young's inequality with exponent 1 if $\f{q}{1\land p}<1$,  and using \eqref{eq:asum2}, we obtain
\begin{align*}
\Big\Vert \sum_R s_R m_R \Big\Vert_{B^{p,q}_\alpha}
\lesssim \big\Vert  a^{1\land p}\big\Vert_{\ell^{1\land \f{q}{1\land p}}}^{\f{1}{1\land p}} \big\Vert b^{1\land p}\big\Vert_{\ell^{\f{q}{1\land p}}}^{\f{1}{1\land p}}
&\lesssim \big\Vert b^{1\land p}\big\Vert_{\ell^{ \f{q}{1\land p}}}^{\f{1}{1\land p}}  
=\Big(\sum_{j\ge 0} \Big\Vert\MM_r\big(s_j(\alpha,\cdot)\big)\Big\Vert_{L^p}^q\Big)^{1/q}.
\end{align*}
Lemma \ref{fefferman-stein B} gives
\begin{align*}
\Big\Vert \sum_R s_R m_R \Big\Vert_{B^{p,q}_\alpha}
\lesssim \Big(\sum_{j\ge 0} \big\Vert s_j(\alpha,\cdot)\big\Vert_{L^p}^q\Big)^{1/q}
= \Big\{\sum_{j\ge 0}\Big(2^{j\alpha}\Big\Vert \sum_{R\in\E_j}|s_R|\,|R|^{-1/2}\Ind_R\Big\Vert_{L^p}\Big)^q\Big\}^{1/q},
\end{align*}
and, since for each $j$ the tiles in $\E_j$ are disjoint, we obtain
\begin{align*}
\Big\Vert \sum_R s_R m_R \Big\Vert_{B^{p,q}_\alpha}
\lesssim  \Big\{\sum_{j\ge 0}2^{j\alpha q}\Big( \sum_{R\in\E_j}\big[|s_R|\,|R|^{-1/2}|R|^{1/p}\big]^p\Big)^{q/p}\Big\}^{1/q} 
=\Vert s\Vert_{b^{p,q}_\alpha}.
\end{align*}
This concludes the proof of the theorem for the Besov spaces.
\end{proof}

\section{Hermite pseudo-multipliers}\label{sec: PDOs}

In this section, we pursue the study of boundedness properties  in Hermite Besov and Hermite Triebel--Lizorkin spaces for pseudo-multipliers with symbols in H\"ormander-type classes adapted to the Hermite setting. In Section~\ref{sec: symbols and needlets}, we define the classes of symbols and study the action of the corresponding pseudo-multipliers on needlets
by proving smoothness  and  cancellation estimates  (Theorems~\ref{th: Tsmooth} and \ref{th: Tcanc}, respectively). In Section~\ref{sec: main thms}, we state the theorems regarding boundedness results and present their proofs: Theorem~\ref{th: main1} gives results for spaces with positive smoothness; by assuming extra cancellation conditions on the symbols or more regularity conditions on the symbols, Theorem~\ref{th: main2} gives results for spaces with zero or negative smoothness as well. The proofs of  Theorem~\ref{th: main1} and \ref{th: main2} use as tools the results of  Theorems~\ref{th: frame}, \ref{th:synth}, \ref{th: Tsmooth} and \ref{th: Tcanc}.

\subsection{Classes of symbols and the action of pseudo-multipliers on needlets}\label{sec: symbols and needlets}
Given a symbol $\sigma: \RR^n\times \NN_0\to\mathbb{C}$ we define the operator $T_\sigma$ by
\begin{align}\label{eq:T}
T_\sigma f(x)
=\sum_{\kk \in \NN_0}\sigma(x,\lambda_\kk) \PP_\kk f(x) 
=\sum_{\kk\in\NN_0}\sigma(x,\lambda_\kk) \sum_{|\xi|=\kk} \ip{f,h_\xi} h_\xi(x).
\end{align}

 We introduce the function
\begin{align}\label{eq:rho} \cro(x)=\f{1}{1+|x|},\qquad x\in\RR^n;\end{align}
observe that 
\begin{align}\label{eq:rhoproperty} 
\cro(y)\sim \cro(x) \qquad \forall \;y\in B(x,\cro(x)).
\end{align}
We call a  non-negative function $g:\RR^n\times\NN_0\to[0,\infty)$ an \emph{admissible  growth function} if for some $0\le \vk<1$ and $\ve>16$
\begin{align}\label{eq:growthfn1}
g(x,\xs) \lesssim \e_{\ve \xs}(x)^{-\vk} 
\end{align}
and
\begin{align}
\label{eq:growthfn2}
 g(x,\xs)\sim g(y,\xs) \qquad \forall \;y\in B(x,\cro(x)),
\end{align}
where $\e_N(x)$ was defined in \eqref{efn} for $N\ge 0.$

\begin{Definition}[Symbols with growth]\label{def: smooth}
Let $m\in \RR$, $\rr,\dd\ge 0$, and $\N, \K\in \NN_0\cup\{\infty\}$. We say the symbol $\sigma: \RR^n\times \NN_0\to\mathbb{C}$ satisfies $\sigma \in \SM^{m,\K,\N}_{\rr,\dd}$ if $\sigma(\cdot,\xs)\in C^{\N}(\RR^n)$ for all $\xs\in \NN_0$ and there exists an admissible growth function $g$ such that
\begin{align}\label{eq:sm}
	| \partial^\nu_x \diff^\kappa_\xs \sigma(x,\xs)| \lesssim g(x,\xs)(1+\sqrt{\xs})^{m-2\rr\kappa+\dd|\nu|}  \qquad \forall(x,\xs)\in \RR^n\times\NN_0
\end{align}
for $\nu\in\NN_0^n$ satisfying $0\le |\nu|\le \N$ and $0\le \kappa\le \K$.
If $\N=\infty$ (respectively $\K=\infty$) then we mean that \eqref{eq:sm} holds for every multi-index $\nu \in \NN_0^n$ (respectively every $\kappa\in\NN_0$) with the implicit constant depending on $\nu$ (respectively $\kappa$).
\end{Definition} 

We note that $\SM^{m,\K,\N}_{\rr',\dd}\subset  \SM^{m,\K,\N}_{\rr,\dd}$  if $\rr\le \rr'$ and $\SM^{m,\K,\N}_{\rr,\dd'}\subset  \SM^{m,\K,\N}_{\rr,\dd}$ if $\dd'\le \dd.$ In particular, all results stated below for $\SM^{m,\K,\N}_{1,1}$ hold true for $\SM^{m,\K,\N}_{1,\dd}$ with $0\le \dd\le 1.$ 

For situations that require some degree of cancellation or orthogonality, we introduce the following condition on the symbols. A related condition, sufficient for the boundedness of a pseudo-differential operator on $L^2(\RR^n)$, has been considered in \cite{MR887496}; see in particular   \cite[Corollary 2.2]{MR887496}.

\begin{Definition}[Cancellation class]\label{def: canc}
Let $m\in\RR$ and $M\in\NN_0\cup \{\infty\}$. We say the symbol $\sigma: \RR^n\times \NN_0\to\mathbb{C}$ belongs to $\CN^{m,M}$  if 
\begin{align}\label{eq:cn1}
\Big(\aver{B(x,\cro(x))}\big|\cro(y)^{|\gamma|} \partial_y^\gamma \sigma(y,\xs)\big|^2\,dy\Big)^{1/2}\lesssim (1+\sqrt{\xs})^m\qquad \forall (x,\xs)\in \RR^n\times\NN_0
\end{align}
for $\gamma\in \NN_0^n$ satisfying $0\le |\gamma| \le 2\floor{(n+M)/2}+2$ and where the implicit constant may depend on $\gamma.$
\end{Definition}
\noindent In view of  \eqref{eq:rhoproperty}, the condition \eqref{eq:cn1} is equivalent to 
\begin{align*}
\Big(\aver{B(x,\cro(x))}\big| \partial_y^\gamma \sigma(y,\xs)\big|^2\,dy\Big)^{1/2}\lesssim (1+\sqrt{\xs})^m\cro(x)^{-|\gamma|}\qquad \forall (x,\xs)\in \RR^n\times\NN_0.
\end{align*}

We next consider the action of $T_\sigma$ on needlets. 
\begin{Theorem}[Smoothness estimates for $T_\sigma\vp_R$]\label{th: Tsmooth}
Let $m\in\RR,$  $\N\in \NN_0$ and $\K\in\NN,$ and suppose that 
 $\sigma\in \SM^{m,\K,\N}_{1,1}$. 
Let $\{\vp_j\}_{j\in\NN_0}$ be an admissible system. Then there exists $0\le \vk<1$ and $\ve >4$ such that for each   $\gamma\in\NN_0^n$ satisfying  $0\le |\gamma|\le \N$ and $1\le N\le \K$, it holds that
\begin{align}\label{eq:Tsmooth}
|\partial_x^\gamma T_\sigma\vp_R(x)|
\lesssim \f{|R|^{-1/2}2^{j(m+|\gamma|)}}{(1+2^j|x-x_R|)^N} \e_{\ve 4^j}(x)^{1-\vk} \quad \forall j\in \NN_0, R\in \E_j,  x\in\RR^n.
\end{align}
\end{Theorem}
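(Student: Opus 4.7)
The plan is to first reduce the statement to a kernel estimate. By orthogonality of the projectors $\PP_k$ and the spectral support of $\varphi_j$,
\[
T_\sigma\varphi_R(x) = \tau_R^{1/2}\,H_j(x,x_R),\qquad H_j(x,y):=\sum_{k\in I_j}\sigma(x,\lambda_k)\,\varphi_j(\sqrt{\lambda_k})\,\PP_k(x,y),
\]
which is a finite sum. Since $\tau_R\sim|R|$, the target \eqref{eq:Tsmooth} reduces to a pointwise kernel bound
\[
|\partial_x^\gamma H_j(x,y)| \lesssim \frac{2^{j(n+m+|\gamma|)}}{(1+2^j|x-y|)^N}\,\e_{\ve 4^j}(x)^{1-\vk},
\]
after absorbing $|R|^{1/2}\cdot 2^{jn}$ into $|R|^{-1/2}$ via the tile-control estimate \eqref{eq:tile control} and transferring $\e_{\ve 4^j}(x_R)$ to $\e_{\ve 4^j}(x)$ (directly when $|x-x_R|\lesssim 2^{-j/3}$, and otherwise by spending part of the spatial decay).

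To prove the kernel estimate I would expand $\partial_x^\gamma H_j$ via Leibniz, splitting $x$-derivatives between $\sigma(x,\lambda_k)$ and $\PP_k(x,y)$. For each $\gamma_1+\gamma_2=\gamma$, the hypothesis $\sigma\in\SM^{m,\K,\N}_{1,1}$ combined with $\sqrt{\lambda_k}\sim 2^j$ on $I_j$ yields $|\partial_x^{\gamma_1}\sigma(x,\lambda_k)|\lesssim 2^{j(m+|\gamma_1|)}\,\e_{\ve 4^j}(x)^{-\vk}$, producing both the correct power of $2^j$ and the admissible $x$-growth. The remaining $k$-sum, for fixed $x$, is the $\partial_x^{\gamma_2}$-derivative of the kernel of the spectrally localized operator with symbol $F_x(\lambda):=\partial_x^{\gamma_1}\sigma(x,\lambda)\,\varphi_j(\sqrt{\lambda})$.

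To extract the spatial decay $(1+2^j|x-y|)^{-N}$ for $1\le N\le\K$, the plan is to iterate Abel summation by parts in $k$ up to $N$ times, transferring forward differences $\diff_k$ onto $F_x$. The discrete Leibniz rule, the symbol condition $|\diff_k^\ell\partial_x^{\gamma_1}\sigma(x,\lambda_k)|\lesssim g(x,\lambda_k)(1+\sqrt{\lambda_k})^{m+|\gamma_1|-2\ell}$ (which exploits $\rho=1$), and Lemma \ref{lem: hoppe} applied to $\varphi_j(\sqrt{\lambda_k})$ together give $|\diff_k^N F_x(\lambda_k)|\lesssim 2^{j(m+|\gamma_1|-2N)}\,\e_{\ve 4^j}(x)^{-\vk}$, a gain of $2^{-2jN}$. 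The iterated partial sums of $\varphi_j(\sqrt{\lambda_k})\,\partial_x^{\gamma_2}\PP_k(x,y)$ on the other side are controlled by Lemma \ref{lem: phiest}, which converts the $2^{-2jN}$ into $(1+2^j|x-y|)^{-N}$ while contributing the remaining factors $2^{j(n+|\gamma_2|)}\,\e_{\ve 4^j}(x)$.

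The main obstacle is precisely this summation-by-parts step: since $\sigma$ lives only on the discrete spectrum $\{\lambda_k\}_{k\in\NN_0}$, the continuous kernel estimates of Lemma \ref{lem: phiest} cannot be applied to $F_x$ directly, and one must instead combine discrete Leibniz for $\diff_k$ with Lemma \ref{lem: hoppe}, tracking admissibility of $g$ uniformly in $x$ and arranging that precisely the fraction $1-\vk$ of $\e_{\ve 4^j}(x)$ survives in the final bound.
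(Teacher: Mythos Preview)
Your reduction to a kernel estimate for $H_j(x,y)=\sum_{k\in I_j}\sigma(x,\lambda_k)\varphi_j(\sqrt{\lambda_k})\PP_k(x,y)$ and your handling of the $x$-growth via the admissible function $g$ are fine. The gap is in the mechanism you propose for extracting the spatial decay $(1+2^j|x-y|)^{-N}$. Abel summation in $k$ does \emph{not} convert the gain $2^{-2jN}$ from $\diff_k^N F_x$ into off-diagonal decay. After $N$ summations by parts the other factor becomes an $N$-fold iterated partial sum of $\partial_x^{\gamma_2}\PP_k(x,y)$ (or of $\varphi_j(\sqrt{\lambda_k})\partial_x^{\gamma_2}\PP_k(x,y)$, depending on your grouping), and Lemma~\ref{lem: phiest} does \emph{not} apply to such partial sums: they correspond to spectral multipliers with a hard cutoff at level $k$, hence non-smooth, and the proof of Lemma~\ref{lem: phiest} needs smoothness of the multiplier precisely to run the finite-difference argument. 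If you bound the iterated partial sums crudely via Cauchy--Schwarz and $\QQ_{4^j}$, each iteration costs a factor $\sim 4^j$ (the number of terms in $I_j$), so after $N$ iterations you pick up $4^{jN}=2^{2jN}$, exactly cancelling the $2^{-2jN}$ you gained on the symbol side---with no spatial decay produced.

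The paper obtains the decay by a different, genuinely algebraic route: it multiplies $H_j(x,y)$ by $(x_i-y_i)^N$ and invokes the identity \eqref{eq:identity A} (Lemma~\ref{lem: identities}(a)), which rewrites $(x_i-y_i)^N\PP_k(x,y)$ in terms of finite differences $\diff_k^\ell$ acting on the coefficient sequence together with the ladder operators $(\A{y}_i-\A{x}_i)^{2\ell-N}$ acting on $\PP_k$. The point is that $2x_i=A_i^*-A_i$ shifts Hermite indices, so the operator piece stays inside the Hermite basis and can be bounded by $\lambda_k^{\ell-N/2}$ times products of shifted $h_\xi$'s; Cauchy--Schwarz then closes to $\QQ_{4^j+N}(x,x)^{1/2}\QQ_{4^j+N}(x_R,x_R)^{1/2}$, from which \eqref{QQ est} and \eqref{eq:tile control} give the final estimate (with the $\e_{\ve 4^j}(x)$ factor arising directly from $\QQ_{4^j+N}(x,x)$, not from a transfer from $x_R$). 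For $|\gamma|>0$ the paper writes $\partial^\gamma=\sum_{\alpha+\beta\le\gamma}C_{\alpha,\beta}A^\alpha x^\beta$ and commutes these through the same identity. In short, the missing ingredient in your plan is the Hermite-specific identity \eqref{eq:identity A}; ordinary Abel summation has no analogue of ``$\partial_\xi e^{i(x-y)\xi}=i(x-y)e^{i(x-y)\xi}$'' here, and cannot substitute for it.
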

Before giving the proof of Theorem \ref{th: Tsmooth}, note that if $\{\vp_j\}_{j\in\NN_0}$ is an admissible system, by \eqref{eq:T} and the orthogonality of the Hermite functions $\{h_\xi\}_{\xi\in\NN_0^n},$ we have the expression
\begin{align}\label{eq:Tphi}
T_\sigma \vp_R(x) 
&=\tau_R^{1/2} \sum_{\kk\in I_j} \sigma(x,\lambda_\kk)\,\vp_j(\sqrt{\lambda_\kk})  \,\PP_\kk(x,x_R),
\end{align}
where the sets $I_j$ are defined in Section \ref{sec: spaces}. 

In the sequel,  the notation $\diff_\kk \sigma(x,\lambda_\kk)$ means that the finite difference is being applied to $\sigma(x,\lambda_\kk)$ as a function of $\kk;$ that is, $\diff_\kk \sigma(x,\lambda_\kk) = \sigma(x,\lambda_{\kk+1})-\sigma(x,\lambda_\kk)$.

\begin{proof}[Proof of Theorem~\ref{th: Tsmooth}] Let $j\in \NN_0$ and $R\in \E_j.$

We first consider the case $|\gamma|=0$. 

\medskip

\underline{Subcase 1 for $|\gamma|=0$}: $|x-x_R|\ge 2^{-j}$.

\medskip

 Let $N\in \NN$ be such that $1\le N\le\K$. We apply the identity \eqref{eq:identity A} to \eqref{eq:Tphi} to get, for $i=1,\dots,n,$
\begin{align*}
&2^N (x_i - x_{R,i})^N T_\sigma\vp_R(x) \\
&\qquad=\tau_R^{1/2} \sum_{\f{N}{2} \le \ell\le N} c_{\ell,N}\sum_{\kk \in I_j} \diff_\kk^\ell \big[\sigma(x,\lambda_\kk)\vp_j(\sqrt{\lambda_\kk})\big]\big(\A{x_R}_i-\A{x}_i\big)^{2\ell-N}\PP_\kk(x,x_R).
\end{align*}
Since $|\tau_R|\sim|R|$, we have
\begin{align}\label{eq:Tsmooth1}
&\big| (x_i-x_{R,i})^NT_\sigma\vp_R(x)\big| \\
&\qquad \lesssim |R|^{1/2}\sum_{\f{N}{2} \le \ell\le N} |c_{\ell,N}|\sum_{\kk \in I_j} \Big|\diff_\kk^\ell \big[\sigma(x,\lambda_\kk)\vp_j(\sqrt{\lambda_\kk})\big]\Big| \,\Big|\big(\A{x_R}_i-\A{x}_i\big)^{2\ell-N}\PP_\kk(x,x_R)\Big| \notag.
\end{align}
We next estimate each factor in the summation over $\kk$. Firstly, from the Leibniz formula for finite differences \eqref{eq:leibniz1} we have
\begin{align*}
\Big|\diff_\kk^\ell \big[\sigma(x,\lambda_\kk)\vp_j(\sqrt{\lambda_\kk})\big]\Big| 
\le \sum_{r=0}^\ell \tbinom{\ell}{r} \big|\diff_\kk^r(\vp_j(\sqrt{\lambda_\kk}))\big| \, \big|\diff_\kk^{\ell-r}\sigma(x,\lambda_{\kk+r})\big|.
\end{align*}
Lemma~\ref{lem: hoppe} gives
\begin{align}\label{eq:phiMVT}
\big|\diff_\kk^r(\vp_j(\sqrt{\lambda_\kk}))\big| \lesssim \lambda_\kk^{N/2-r}2^{-jN}.
\end{align}
 By the assumption on $\sigma,$ there exists an admissible growth function $g,$ $0\le \vk<1$ and $\ve>4$ such that
\begin{align*}
\big|\diff_\kk^{\ell-r}\sigma(x,\lambda_{\kk+r})\big| 
\lesssim \lambda_\kk^{m/2-\ell+r} g(x,\kk)
\lesssim \lambda_\kk^{m/2-\ell+r}\e_{4\ve\kk}(x)^{-\vk}.
\end{align*} 
These last three facts give
\begin{align}\label{eq:Tsmooth2}
\big|\diff_\kk^\ell\big[\sigma(x,\lambda_{\kk}) \vp_j(\sqrt{\lambda_\kk})\big]\big|\lesssim 2^{-jN}  \lambda_\kk ^{N/2+m/2-\ell}\e_{4\ve \kk}(x)^{-\vk}.
\end{align}
It may be worth observing at this point that the implicit constant in \eqref{eq:Tsmooth2} depends on $N$, $\Vert \vph^{(N)}\Vert_{L^\infty}$ and $\Vert \vph_0^{(N)}\Vert_{L^\infty}$.
Secondly, since $0\le \ell-N/2\le N/2$, then observe that 
\begin{align*}
\big(2(\kk +2\ell-N)+2\big)^{\ell-N/2}
\le (2\kk +2N+2)^{\ell-N/2}
\le(2N+2)^{N/2} \lambda_\kk^{\ell-N/2}.
\end{align*}
With this observation in mind, the binomial theorem and an application of \eqref{eq:identity d1} gives
\begin{align*}
\Big|\big(\A{x_R}_i-\A{x}_i\big)^{2\ell-N}\PP_\kk(x,x_R)\Big| 
&\le \sum_{|\xi|=\kk} \sum_{l=0}^{2\ell-N} \tbinom{2\ell-N}{l} \Big|\big(\A{x}_i\big)^{2\ell-N-l} h_\xi(x)\Big| \,\Big|\big(\A{x_R}_i\big)^l h_\xi(x_R)\Big|\\
&\lesssim  \lambda_\kk^{\ell-N/2} \sum_{|\xi|=\kk} \sum_{l=0}^{2\ell-N} \tbinom{2\ell-N}{l} \big| h_{\xi+(2\ell-N-l)e_i}(x)\big|\,\big| h_{\xi+le_i}(x_R)\big|,
\end{align*}
with an implicit  constant that depends on $N$.
Applying  the Cauchy--Schwarz inequality we obtain
\begin{align}\label{eq:Tsmooth3}
\Big|\big(\A{x_R}_i-\A{x}_i\big)^{2\ell-N}\PP_\kk(x,x_R)\Big| 
\lesssim \lambda_\kk^{\ell-N/2} \Big(\sum_{l=0}^{2\ell-N} \PP_{\kk+2\ell-N-l}(x,x)\Big)^{1/2}\Big(\sum_{l=0}^{2\ell-N} \PP_{\kk+l}(x_R,x_R)\Big)^{1/2}.
\end{align}
Recall that $\lambda_\kk^{m/2}\sim 2^{jm}$. Then inserting the estimates \eqref{eq:Tsmooth2} and \eqref{eq:Tsmooth3} into \eqref{eq:Tsmooth1} and using the Cauchy--Schwarz inequality, we get
\begin{align*}
	\big| (x_i-x_{R,i})^NT_\sigma\vp_R(x)\big| 
	&\lesssim |R|^{1/2} 2^{j(m-N)}\e_{\ve 4^j}(x)^{-\vk}\\
	&\quad\times \sum_{\f{N}{2} \le \ell\le N}  \sum_{\kk \in I_j} \Big(\sum_{l=0}^{2\ell-N} \PP_{\kk+2\ell-N-l}(x,x)\Big)^{1/2}\Big(\sum_{l=0}^{2\ell-N} \PP_{\kk+l}(x_R,x_R)\Big)^{1/2} \\
	&\lesssim |R|^{1/2} 2^{j(m-N)}\e_{\ve 4^j}(x)^{-\vk} \,\QQ_{4^j+N}(x,x)^{1/2} \QQ_{4^j+N}(x_R,x_R)^{1/2}.
\end{align*}
Since this estimate holds for  $i=1,\dots,n,$ it follows that
\begin{align*}
|x-x_R|^N |T_\sigma\vp_R(x)| \lesssim |R|^{1/2} 2^{j(m-N)}\e_{\ve 4^j}(x)^{-\vk} \QQ_{4^j+N}(x,x)^{1/2} \QQ_{4^j+N}(x_R,x_R)^{1/2} 
\end{align*}
with constants depending on $N, m ,\vp, \sigma$. 

\bigskip

\underline{Subcase 2 for $|\gamma|=0$}: $|x-x_R|<2^{-j}$. 

\medskip

By the assumption on $\sigma$ and the Cauchy--Schwarz inequality, we get
\begin{align*}
|T_\sigma\vp_R(x)|
&\le \big|\tau_R^{1/2}\big| \sum_{\kk\in I_j} |\sigma(x,\lambda_\kk)| \,|\vp_j(\sqrt{\lambda_\kk})|\,|\PP_\kk(x,x_R)| \\
&\lesssim |R|^{1/2} 2^{jm}\e_{\ve 4^j}(x)^{-\vk} \sum_{\kk\in I_j}\sum_{|\xi|=\kk} |h_\xi(x)| \,|h_\xi(x_R)|\\
&\lesssim |R|^{1/2} 2^{jm}\e_{\ve 4^j}(x)^{-\vk} \QQ_{4^j+N}(x,x)^{1/2} \QQ_{4^j+N}(x_R,x_R)^{1/2} 
\end{align*}

Combining the estimates for both subcases  along with \eqref{QQ est}  and \eqref{eq:tile control} we have
\begin{align*}
	|T_\sigma\vp_R(x)|
	&\lesssim
	\f{|R|^{1/2} 2^{jm}\e_{\ve 4^j}(x)^{-\vk}}{(1+2^j|x-x_R|)^N} \QQ_{4^j+N}(x,x)^{1/2} \QQ_{4^j+N}(x_R,x_R)^{1/2} \\
	&\lesssim |R|^{1/2} 2^{jn} \e_{\ve 4^j}(x_R) \f{2^{jm}}{(1+2^j|x-x_R|)^N} \e_{\ve 4^j}(x)^{1-\vk} \\
	&\lesssim |R|^{-1/2}\f{2^{jm}}{(1+2^j|x-x_R|)^N}\e_{\ve 4^j}(x)^{1-\vk},
\end{align*}
with constants independent of $j\in\NN_0,$ $R\in \E_j$ and $x\in \RR^n.$

\bigskip

We turn to the case $|\gamma|>0$. Note first that we can represent $\partial^\gamma$ by
$$\partial^\gamma =\sum_{\alpha+\beta\le\gamma} C_{\alpha,\beta} A^\alpha x^\beta;$$
see \cite[(6.15)]{MR2399106}. This means that in order to prove \eqref{eq:Tsmooth} it suffices to show
\begin{align}\label{eq:Tsmooth4}
|A^\alpha x^\beta T_\sigma\vp_R(x)|
\lesssim \f{|R|^{-1/2}2^{j(m+|\alpha| +|\beta|)}}{(1+2^j|x-x_R|)^N} \e_{\ve 4^j}(x)^{1-\vk}
\end{align}
for any $\alpha,\beta\in \NN_0^n$ such that $0\le |\alpha| + |\beta|\le\N$ and $1\le N\le\K$. 

\bigskip

\underline{Subcase 1 for $|\gamma|>0$}: $|x-x_R|\ge 2^{-j}$.

\medskip

 We first prove bounds for each component $i=1,\dots,n$ by expressing the operator $A^\alpha x^\beta$ in terms of two commuting operators:
\begin{align*}
\big(\A{x}\big)^\alpha x^\beta =\big(\A{x}\big)^{\alpha-\alpha_ie_i}x^{\beta-\beta_ie_i}\big(\A{x}_i\big)^{\alpha_i}x_i^{\beta_i},
\end{align*}
where $\{e_i\}_{1\le i\le n}$ is the canonical basis for $\RR^n$.
Using identity  \eqref{eq:identity b2}, we have
\begin{align}\label{eq:Tsmooth5}
&(x_i-x_{R,i})^N \big(\A{x}\big)^\alpha x^\beta T_\sigma \vp_R(x) \\
&\qquad= \sum_{s=0}^{\alpha_i}\tbinom{\alpha_i}{s}\tfrac{N!}{(N-s)!}\big(\A{x}\big)^{\alpha-se_i}x^\beta (x_i-x_{R,i})^{N-s}T_\sigma\vp_R(x)\notag.
\end{align}
By \eqref{eq:identity A} and the Leibniz rule for finite differences \eqref{eq:leibniz1},
\begin{align*}
 &(x_i-x_{R,i})^{N-s}T_\sigma\vp_R(x)\\
 &=\tau_R^{1/2}2^{-(N-s)} \sum_{\f{N-s}{2}\le\ell\le N-s} c_{\ell, N-s} \\
 &\qquad\qquad\times \sum_{\kk\in I_j} \sum_{r=0}^\ell \tbinom{\ell}{r}\diff_\kk^r (\vp_j(\sqrt{\lambda_\kk})) \,\diff_\kk^{\ell-r}\sigma(x,\lambda_{\kk+r}) \big(\A{x_R}_i-\A{x}_i\big)^{2\ell-N+s}\PP_\kk(x,x_R).
\end{align*}
Applying $\big(\A{x}\big)^{\alpha-se_i}x^\beta$ to this expression and using \eqref{eq:leibniz2}, we get
\begin{align}\label{eq:Tsmooth6}
&\big(\A{x}\big)^{\alpha-se_i}x^\beta (x_i-x_{R,i})^{N-s}T_\sigma\vp_R(x) \\
&\qquad=\tau_R^{1/2}2^{-(N-s)} \sum_{\f{N-s}{2}\le\ell\le N-s} c_{\ell, N-s} \sum_{\kk\in I_j} \sum_{r=0}^\ell \tbinom{\ell}{r}\diff_\kk^r (\vp_j(\sqrt{\lambda_\kk})) \notag\\
&\qquad\qquad\times\sum_{\nu\le\alpha-se_i} \tbinom{\alpha-se_i}{\nu} (-1)^\nu \partial_x^\nu \diff_\kk^{\ell-r} \sigma(x,\lambda_{\kk+r}) \big(\A{x}\big)^{\alpha-se_i -\nu} \big[x^\beta \big(\A{x_R}_i-\A{x}_i\big)^{2\ell-N+s}\PP_\kk(x,x_R)\big].\notag
\end{align}
Writing $x^\beta=x^{\beta-\beta e_i}x_i^{\beta_i},$ applying \eqref{eq:identity b1} to $x_i^{\beta_i}\big(\A{x_R}_i-\A{x}_i\big)^{2\ell-N+s}$, and commuting $x^{\beta-\beta_i e_i}$ with powers of $\A{x_R}_i-\A{x}_i,$ we obtain
\begin{align*}
&x^\beta \big(\A{x_R}_i-\A{x}_i\big)^{2\ell-N+s}\PP_\kk(x,x_R)\\
&\qquad=x^{\beta-\beta e_i} \sum_{t=0}^{\beta_i} \tbinom{\beta_i}{t}\tfrac{(2\ell-N+s)!}{(2\ell-N+s-t)!} (-1)^t \big(\A{x_R}_i-\A{x}_i\big)^{2\ell-N+s-t}x_i^{\beta_i-t}\PP_\kk(x,x_R) \\
&\qquad= \sum_{t=0}^{\beta_i} \tbinom{\beta_i}{t}\tfrac{(2\ell-N+s)!}{(2\ell-N+s-t)!} (-1)^t \big(\A{x_R}_i-\A{x}_i\big)^{2\ell-N+s-t}x^{\beta-te_i}\PP_\kk(x,x_R).
\end{align*}
Applying $\big(\A{x}\big)^{\alpha-s e_i-\nu}$ to this expression, expanding  powers of $\big(\A{x_R}_i-\A{x}_i\big)$ by the binomial theorem, and then absorbing $x^{\beta-t e_i}$ into $\PP_\kk(x,x_R)$ via \eqref{eq:identity C}, we get
\begin{align*}
& \big(\A{x}\big)^{\alpha-se_i -\nu} \big[x^\beta \big(\A{x_R}_i-\A{x}_i\big)^{2\ell-N+s}\PP_\kk(x,x_R)\big] \\
&\qquad= \sum_{t=0}^{\beta_i} \tbinom{\beta_i}{t}\tfrac{(2\ell-N+s)!}{(2\ell-N+s-t)!} (-1)^t \sum_{l=0}^{2\ell-N+s-t} (-1)^l \tbinom{2\ell-N+s-t}{l} \\
&\qquad\qquad\times\big(\A{x_R}_i\big)^{2\ell-N+s-t-l} \big(\A{x}\big)^{\alpha-s e_i-\nu+l e_i}\\
&\qquad\qquad\qquad\times\sum_{|\xi|=\kk}\sum_{\omega\le\beta-t e_i} b_{\omega,\beta-t e_i}(\xi) h_{\xi+\beta-te_i -2\omega}(x) h_\xi (x_R).
\end{align*}
Inserting this last expression into \eqref{eq:Tsmooth6}, and its result into \eqref{eq:Tsmooth5}, we arrive at 
\begin{align}\label{eq:Tsmooth7}
&(x_i-x_{R,i})^N \big(\A{x}\big)^\alpha x^\beta T_\sigma \vp_R(x) \\
&\qquad=\tau_R^{1/2} 
\sum_{s=0}^{\alpha_i} \sum_{\kk\in I_j} \sum_{\f{N-s}{2}\le\ell\le N-s} \sum_{r=0}^\ell \sum_{\nu\le\alpha-se_i} \sum_{t=0}^{\beta_i}\sum_{l=0}^{2\ell-N+s-t} C' \notag \\
&\qquad\qquad\times 
 \diff_\kk^r(\vp_j(\sqrt{\lambda_\kk})) \,\partial_x^\nu \diff_\kk^{\ell-r} \sigma(x,\lambda_{\kk+r})  
\sum_{|\xi|=\kk}\sum_{\omega\le\beta-t e_i} b_{\omega,\beta-t e_i}(\xi) \notag \\
&\qquad\qquad\qquad\times\big(\A{x_R}_i\big)^{2\ell-N+s-t-l} \big(\A{x}\big)^{\alpha-s e_i-\nu+l e_i} h_{\xi+\beta-te_i -2\omega}(x) h_\xi (x_R),\notag
\end{align}
where 
\begin{align*}
C' = (-1)^{t+l+\nu}2^{s-N} \tbinom{\alpha_i}{s}\tbinom{\ell}{r}\tbinom{\alpha-se_i}{\nu}\tbinom{\beta_i}{t}\tbinom{2\ell-N+s-t}{l} \tfrac{N!}{(N-s)!} \tfrac{(2\ell-N+s)!}{(2\ell-N+s-t)!} c_{\ell,N-s}.
\end{align*}
We now estimate the above expression. By \eqref{eq:identity d1} and \eqref{eq:identity d2}, we have
\begin{align*}
	&\big|\big(\A{x_R}_i\big)^{2\ell-N+s-t-l} \big(\A{x}\big)^{\alpha-s e_i-\nu+l e_i} h_{\xi+\beta-te_i -2\omega}(x) h_\xi (x_R)\big| \\
	&\lesssim \lambda_\kk^{\ell-N/2+|\alpha-\nu|/2-t/2} \big| h_{\xi+(2\ell-N+s-t-l)e_i}(x_R)\big|\,\big| h_{\xi+\alpha+\beta-\nu-2\omega+(l-t-s)e_i}(x)\big|,
\end{align*}
and by  part (c) of Lemma \ref{lem: identities}, we have
\begin{align*}
 |b_{\omega,\beta-te_i}(\xi)|\lesssim \lambda_\kk^{(|\beta|-t)/2}.
\end{align*}
Furthermore, our assumption on $\sigma$ gives
\begin{align*}
\big| \partial_x^\nu \diff_\kk^{\ell-r} \sigma(x,\lambda_{\kk+r})  
\big|
\lesssim \lambda_\kk^{m/2+|\nu|/2-\ell+r} g(x,\kk)
\lesssim \lambda_\kk^{m/2+|\nu|/2-\ell+r} \e_{\ve \kk}(x)^{-\vk}.
\end{align*} 
Inserting these three estimates along with \eqref{eq:phiMVT} into \eqref{eq:Tsmooth7}, and noting the binomial bounds $\binom{a}{b}\le 2^a$ and $\f{a!}{(a-b)!}\le a^b$, we obtain
 \begin{align*}
&\big|(x_i-x_{R,i})^N \big(\A{x}\big)^\alpha x^\beta T_\sigma \vp_R(x)\big| \\
&\qquad\lesssim 
2^{-jN} |R|^{1/2} \e_{\ve 4^j}(x)^{-\vk} \,2^{j(m+|\alpha|+|\beta|)} \\
&\qquad\qquad\times \sum_{\kk\in I_j} \sum_{s=0}^{\alpha_i} \sum_{\f{N-s}{2}\le\ell\le N-s} \sum_{r=0}^\ell \sum_{t=0}^{\beta_i}\sum_{\nu\le\alpha-se_i} \sum_{\omega\le\beta-t e_i}  
\sum_{|\xi|=\kk}  \\
&\qquad\qquad\qquad\times\sum_{l=0}^{2\ell-N+s-t}  \big| h_{\xi+(2\ell-N+s-t-l)e_i}(x_R)\big|\,\big| h_{\xi+\alpha+\beta-\nu-2\omega+(l-t-s)e_i}(x)\big|. 
\end{align*}
By two applications of the Cauchy--Schwarz inequality, it follows that
\begin{align*}
&\sum_{\kk\in I_j} \sum_{l=0}^{2\ell-N+s-t}  \sum_{|\xi|=\kk}
\big| h_{\xi+(2\ell-N+s-t-l)e_i}(x_R)\big|\,\big| h_{\xi+\alpha+\beta-\nu-2\omega+(l-t-s)e_i}(x)\big|  \\
&\qquad\le \sum_{\kk\in I_j} \sum_{l=0}^{2\ell-N+s-t}  \PP_{\kk+|\alpha-\nu| +|\beta-2\omega| +l-t-s}(x,x)^{\f{1}{2}} \PP_{\kk +2\ell-N+s-t-l}(x_R,x_R)^{\f{1}{2}} \\
&\qquad\lesssim \QQ_{4^j +N+|\alpha|+|\beta|}(x,x)^{\f{1}{2}} \QQ_{4^j+N}(x_R,x_R)^{\f{1}{2}}.
\end{align*}
We then conclude that
\begin{align*}
&\big|(x_i-x_{R,i})^N \big(\A{x}\big)^\alpha x^\beta T_\sigma \vp_R(x)\big| \\
&\qquad\lesssim 2^{-jN} |R|^{1/2} \e_{\ve 4^j}(x)^{-\vk} 2^{j(m+|\alpha|+|\beta|)} \QQ_{4^j +N+|\alpha|+|\beta|}(x,x)^{\f{1}{2}} \QQ_{4^j+N}(x_R,x_R)^{\f{1}{2}}
\end{align*}
with constants depending only on $N,\alpha,\beta,\vp,\sigma$. 

\bigskip

\underline{Subcase 2 for $|\gamma|>0$}: $|x-x_R|<2^{-j}$. 

\medskip

By \eqref{eq:leibniz2} and \eqref{eq:identity C}, we get
\begin{align*}
&\big(\A{x}\big)^\alpha x^\beta T_\sigma \vp_R(x)\\
&\qquad= \tau_R^{\f{1}{2}} \sum_{\kk\in I_j} \vp_j(\sqrt{\lambda_\kk}) \big(\A{x}\big)^\alpha\big[\sigma(x,\lambda_\kk)x^\beta\PP_\kk(x,x_R)\big]  \\
&\qquad= \tau_R^{\f{1}{2}} \sum_{\kk\in I_j} \vp_j(\sqrt{\lambda_\kk}) 
\sum_{\nu\le\alpha} \tbinom{\alpha}{\nu} (-1)^\nu\partial_x^\nu \sigma(x,\lambda_\kk) \big(\A{x}\big)^{\alpha-\nu}\big[x^\beta\PP_\kk(x,x_R)\big] \\
&\qquad= \tau_R^{\f{1}{2}} \sum_{\kk\in I_j} \vp_j(\sqrt{\lambda_\kk}) 
\sum_{\nu\le\alpha} \tbinom{\alpha}{\nu} (-1)^\nu\partial_x^\nu \sigma(x,\lambda_\kk) 
\sum_{\omega\le\beta}  \sum_{|\xi|=\kk} b_{\omega,\beta}(\xi)\big(\A{x}\big)^{\alpha-\nu} h_{\xi+\beta-2\omega}(x) \, h_\xi(x_R).
\end{align*}
From \eqref{eq:identity d2}, part (c) of Lemma \ref{lem: identities} and our assumption on $\sigma$,  we have the following three estimates:
\begin{gather*}
\big|\big(\A{x}\big)^{\alpha-\nu} h_{\xi+\beta-2\omega}(x)\big| \lesssim \lambda_\kk^{\f{|\alpha-\nu|}{2}} \big|h_{\xi+\beta-2\omega+\alpha-\nu}(x)\big|,  \\
|b_{\omega,\beta}(\xi)|\lesssim \lambda_\kk^{\f{|\beta|}{2}}, \\
\big| \partial_x^\nu \sigma(x,\lambda_\kk)\big| \lesssim \lambda_\kk^{\f{m}{2}+\f{|\nu|}{2}} \e_{4\ve \kk}(x)^{-\vk}.
\end{gather*}
Applying these estimates and making use of  the Cauchy--Schwarz  inequality,  we obtain
\begin{align*}
&\big| \big(\A{x}\big)^\alpha x^\beta T_\sigma \vp_R(x)\big| \\
&\qquad\lesssim |R|^{\f{1}{2}} \sum_{\kk\in I_j} \lambda_\kk^{\f{m+|\alpha|+|\beta|}{2}} \e_{4\ve \kk}(x)^{-\vk}  \sum_{\substack{\omega\le\beta\\ \nu\le\alpha}}\sum_{|\xi|=\kk}  |h_{\xi+\beta-2\omega+\alpha-\nu}(x)| |h_\xi(x_R)| \\
&\qquad\lesssim  |R|^{\f{1}{2}} \e_{\ve 4^j}(x)^{-\vk} 2^{j(m+|\alpha|+|\beta|)}  \QQ_{4^j +N+|\alpha|+|\beta|}(x,x)^{\f{1}{2}} \QQ_{4^j+N}(x_R,x_R)^{\f{1}{2}}.
\end{align*}

Finally, combining the estimates for both subcases lead to
\begin{align*}
&\big| \big(\A{x}\big)^\alpha x^\beta T_\sigma \vp_R(x)\big| \\
	&\qquad\lesssim
	\f{|R|^{\f{1}{2}} 2^{j(m+|\alpha|+|\beta|)}\e_{\ve 4^j}(x)^{-\vk}}{(1+2^j|x-x_R|)^N} \QQ_{4^j+N+|\alpha|+|\beta|}(x,x)^{\f{1}{2}} \QQ_{4^j+N}(x_R,x_R)^{\f{1}{2}}.
\end{align*}
By making use of \eqref{QQ est} and  \eqref{eq:tile control} we obtain \eqref{eq:Tsmooth4}, completing the proof of Theorem \ref{th: Tsmooth}.
\end{proof}

\begin{Theorem}[Cancellation estimates for $T_\sigma\vp_R$]\label{th: Tcanc}
Let $m\in\RR$, $M,\N, \K\in\NN_0$  and  $\K\ge n+M+1$.
Assume that $\sigma:\RR^n\times\NN_0\to\mathbb{C}$ satisfies one of the following conditions:
\begin{enumerate}[\upshape(a)]
\item $\sigma\in \SM^{m,\K,\N}_{1,1}\cap \CN^{m,M},$
\item $\sigma \in \SM^{m,\K,\N}_{1,\dd}$ for some $0\le \delta<1$ and  $\N\ge 2\ceil{\f{n+M+1}{2(1-\dd)}}$.  
\end{enumerate}
Let $\{\vp_j\}_{j\ge 0}$ be an admissible system and $0<\theta\le 1$. Then for  each  $\gamma\in \NN_0^n$ satisfying  $0\le |\gamma|\le M,$ it holds that 
\begin{align}\label{eq:Tcanc}
\Big|\int_{\RR^n}(x-x_R)^\gamma T_\sigma\vp_R(x)\,dx\Big|
\lesssim |R|^{-1/2}2^{j(m-n-|\gamma|)} \Big(\f{1+|x_R|}{2^j}\Big)^{M+\theta-|\gamma|}
\end{align}
for every $j\in\NN_0$ and each $R\in\E_j$.
\end{Theorem}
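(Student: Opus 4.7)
The plan is to prove this by a Taylor expansion of $\sigma(\cdot, \lambda_k)$ in the $x$-variable at the node $x_R$. Starting from \eqref{eq:Tphi}, substitute
$$\sigma(x, \lambda_k) = \sum_{|\nu| \le M'} \frac{(x-x_R)^\nu}{\nu!} \partial_x^\nu \sigma(x_R, \lambda_k) + R_{M'}(x, x_R, \lambda_k),$$
with $M' = M$ in case (a) and $M' = \N-1$ in case (b); the integral form of the remainder will be needed later. This decomposes $\int (x-x_R)^\gamma T_\sigma\vp_R(x)\,dx$ into a polynomial contribution $\mathcal{I}_{\mathrm{pol}}$ plus a remainder contribution $\mathcal{I}_{\mathrm{rem}}$.

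For $\mathcal{I}_{\mathrm{pol}}$, each summand has the form $\frac{1}{\nu!}\int (x-x_R)^{\gamma+\nu}\,\Psi_\nu(x)\,dx$, where $\Psi_\nu(x) := \tau_R^{1/2}\sum_{k\in I_j}\omega_\nu(\lambda_k)\vp_j(\sqrt{\lambda_k})\PP_k(x, x_R)$ and $\omega_\nu(\lambda_k) := \partial_x^\nu\sigma(x_R,\lambda_k)$. The hypothesis $\sigma\in\SM^{m,\K,\N}_{1,\delta}$, together with Lemma~\ref{lem: hoppe} and the discrete Leibniz rule (as exploited in the proof of Theorem~\ref{th: Tsmooth}), provides the size and finite-difference control on the sequence $\omega_\nu(\lambda_k)\vp_j(\sqrt{\lambda_k})$ required to adapt the argument underlying Lemma~\ref{lem: phiest}\eqref{phiest B} with $K = M+\theta-|\gamma|-|\nu|$. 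This yields the target estimate carrying an extra factor $g(x_R,\lambda_k)\,2^{j(m+\delta|\nu|)}$, which is absorbed via \eqref{eq:tile control} (retaining a positive power of $\e_{\ve 4^j}(x_R)$ to dominate $g$ using \eqref{eq:growthfn1}). The resulting geometric sum over $|\nu|\le M'$ converges.

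For $\mathcal{I}_{\mathrm{rem}}$, case (b) uses the integral form
$$R_{M'}(x, x_R, \lambda_k) = (M'+1)\sum_{|\beta|=M'+1}\frac{(x-x_R)^\beta}{\beta!}\int_0^1 (1-t)^{M'}\partial_x^\beta\sigma(x_R+t(x-x_R),\lambda_k)\,dt$$
together with the pointwise bound $|\partial_x^\beta\sigma(y,\lambda_k)|\lesssim g(y,\lambda_k)\,2^{j(m+\delta(M'+1))}$, the kernel estimate \eqref{phiest A}, and a dyadic decomposition in $|x-x_R|$. The hypothesis $\N\ge 2\ceil{(n+M+1)/(2(1-\delta))}$ is precisely what makes the gain $2^{-j(1-\delta)(M'+1)}$ dominate the worst-case polynomial growth coming from $|x-x_R|^{|\gamma|+M'+1}$ paired with the $\PP_k$-kernel. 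In case (a), $\partial_x^{M+1}\sigma$ is controlled only by the $L^2$-averaged condition \eqref{eq:cn1}; I would partition $\RR^n$ dyadically into annuli $B(x_R, 2^{\ell+1}\cro(x_R))\setminus B(x_R, 2^\ell\cro(x_R))$, apply Cauchy--Schwarz on each annulus using the $\CN^{m,M}$ bound, pair with the $L^2$-norm of $(x-x_R)^{\gamma+\beta}\PP_k(x,x_R)$ (controlled by Hermite orthogonality and \eqref{QQ bound}), and sum over $k\in I_j$ using the spectral decay afforded by the hypothesis $\K\ge n+M+1$.

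The main obstacle is case (a): the $L^2$-averaged cancellation condition $\CN^{m,M}$ must substitute for the pointwise control unavailable when $\delta=1$, which demands coordinating several factors simultaneously: the admissible growth function $g$ against the Christoffel weight $\e_{\ve 4^j}(x_R)$ from \eqref{eq:tile control}, the factor $\cro(x_R)=(1+|x_R|)^{-1}$ that appears in the cancellation bound \eqref{eq:cn1}, and the finite-difference budget $\K\ge n+M+1$ needed for the spectral summation. The correct polynomial dependence on $1+|x_R|$ in the target bound must emerge from balancing these ingredients in the annular estimates.
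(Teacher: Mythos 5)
Your route is genuinely different from the paper's. The paper does \emph{not} Taylor-expand $\sigma$ in $x$; instead it fixes a smooth cut-off $\chi_R$ supported in $B(x_R,\cro(x_R))$ and splits the integral into a far-field term $I$ (outside $\tfrac12 B$), estimated directly from the pointwise decay of $T_\sigma\vp_R$ in Theorem~\ref{th: Tsmooth}, and a near-field term $II$ (on $B$), estimated by inserting $\lambda_\kk^{-N}\LL^N$ with $N=\floor{(n+M)/2}+1$ and using $\LL h_\xi=\lambda_{|\xi|}h_\xi$, Cauchy--Schwarz, and \eqref{QQ est}. The $L^2$-averaged condition $\CN^{m,M}$ (case (a)) or the pointwise bounds $\SM^{m,\K,\N}_{1,\dd}$ (case (b)) enter only in term $II$, applied on the single ball $B$; the factor $\big(\tfrac{1+|x_R|}{2^j}\big)^{M+\theta-|\gamma|}$ emerges from the scale $\cro(x_R)$ of the cut-off and the powers of $x$ produced by $\LL^N$.

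Several of your steps would fail or are misattributed. First, the sum over $\kk\in I_j$ in the near field does \emph{not} need any $\kk$-regularity: it is closed by Cauchy--Schwarz and $\sum_{\kk\le 4^j}\sum_{|\xi|=\kk}\|h_\xi\|^2_{L^2(B)}=\int_B\QQ_{4^j}$; so the hypothesis $\K\ge n+M+1$ is not ``spectral decay'' but is spent entirely on the far-field term $I$, where one needs $\int(1+2^j|x-x_R|)^{-(\K-M-\theta)}\,dx<\infty$, i.e.\ $\K>n+M+\theta$. Second, your polynomial piece $\mathcal{I}_{\mathrm{pol}}$ cannot be closed by an adapted \eqref{phiest B}: to extract $\big(\tfrac{1+|x_R|}{2^j}\big)^{M+\theta-|\gamma|}$ after applying that estimate to the multi-index $\gamma+\nu$, one must take $K=M+\theta+|\nu|$ (not $M+\theta-|\gamma|-|\nu|$, which gives the wrong exponent and violates $|\gamma+\nu|\le K$), and the far-field part of the proof of \eqref{phiest B} then requires spatial decay of order $>n+K=n+M+\theta+|\nu|$; but the modified kernel $\Psi_\nu$ only inherits decay of order $\le\K$ from the finite-difference budget, which exceeds $n+M+1$ already at $|\nu|=1$. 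The paper avoids this because it never spends $\K$ per Taylor term. Third, the annular Cauchy--Schwarz plan for the remainder in case (a) does not interface cleanly with $\CN^{m,M}$: condition \eqref{eq:cn1} controls averages over balls $B(y,\cro(y))$ centred at arbitrary $y$, whereas your annuli $B(x_R,2^{\ell+1}\cro(x_R))\setminus B(x_R,2^\ell\cro(x_R))$ are at distances where $\cro(\cdot)$ varies substantially across the annulus, so a covering argument would be needed and it is not clear it produces the right power of $1+|x_R|$. The paper sidesteps all of this by localizing to the one ball $B(x_R,\cro(x_R))$ from the outset.
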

\begin{proof}[Proof of Theorem \ref{th: Tcanc}] Let $j\in \NN_0,$  $R\in \E_j$ and $\gamma\in \NN_0^n$ such that $|\gamma|\le M.$

We first prove the theorem assuming (a).
Set $B=B(x_R,\cro(x_R));$ fix a function $\chi_R\in C^\infty(\RR^n)$ supported in $B$ that satisfies $\chi_R=1$ on $\f{1}{2}B,$  $0\le\chi_R\le 1$ and
\begin{align*}
 \Vert \chi_R^{(\eta)}\Vert_\infty \lesssim \f{1}{\cro(x_R)^{|\eta|}}\qquad \forall\eta\in\NN^n_0.
 \end{align*}
 We split the integral into two terms:
 \begin{align*}
\int_{\RR^n}(x-x_R)^\gamma T_\sigma\vp_R(x)\,dx
 &= \int_{\RR^n}(1-\chi_R(x))(x-x_R)^\gamma T_\sigma \vp_R(x)\,dx \\
 &\qquad+\int_{\RR^n} \chi_R(x)(x-x_R)^\gamma T_\sigma \vp_R(x)\,dx \\
 &=:I + II.
 \end{align*}
 
 To estimate $I$ we use the bounds on $T_\sigma \vp_R$ in Theorem \ref{th: Tsmooth}, which hold because  $\sigma\in  \SM^{m,\K,\N}_{1,1}$. We have
 \begin{align*}
 |I| 
  &\lesssim |R|^{-1/2} 2^{j(m-|\gamma|)} \int_{\RR^n\backslash \frac{1}{2}B}  \f{(2^j|x-x_R|)^{|\gamma|}}{(1+2^j|x-x_R|)^{\K}}\,dx  \\
 &\le  |R|^{-1/2} 2^{j(m-|\gamma|)}\Big(\f{1+|x_R|}{2^j}\Big)^{M+\theta-|\gamma|}\int_{\RR^n }\f{1}{(1+2^j|x-x_R|)^{\K-M-\theta}}\,dx, 
 \end{align*}
 which yields \eqref{eq:Tcanc} since $\K\ge n+M+1>n+M+\theta$.
 
 For the second term, we apply  the Cauchy-Schwarz inequality and obtain
\begin{align*}
|II|
&=\Big| \tau_R^{1/2} \sum_{\kk\in I_j}\vp_j(\sqrt{\lambda_\kk}) \sum_{|\xi|=\kk}h_\xi(x_R)\int_{\RR^n}\chi_R(x) (x-x_R)^\gamma \sigma(x,\lambda_\kk) h_\xi(x)\,dx\Big|\\
&\lesssim \Vert\vp\Vert_{L^\infty} |R|^{1/2}\,\QQ_{4^j}(x_R,x_R)^{1/2}\Big(\sum_{\kk\in I_j}\sum_{|\xi|=\kk} \Big| \int_{\RR^n} \chi_R(x) (x-x_R)^\gamma \sigma(x,\lambda_\kk) h_\xi(x)\,dx\Big|^2\Big)^{1/2}.
\end{align*}
We next estimate  the second factor. For $|\xi|=\kk$ and $N \in \NN_0$, we have
\begin{align*}
&\Big| \int_{\RR^n} \chi_R(x) (x-x_R)^\gamma \sigma(x,\lambda_\kk) h_\xi(x)\,dx\Big| \\
&\qquad=\lambda^{-N}_{\kk} \Big|\int_{\RR^n} \LL_x^N\big [ \chi_R(x) (x-x_R)^\gamma \sigma(x,\lambda_\kk)\big] h_\xi(x)\,dx\Big| \\
&\qquad\le \lambda^{-N}_{\kk} \Big\Vert \LL^N \big[\chi_R(\cdot) (\cdot-x_R)^\gamma \sigma(\cdot,\lambda_\kk)\big] \Big\Vert_{L^2(B)} \Vert h_\xi\Vert_{L^2(B)}.
\end{align*}
Repeated application of the Leibniz' rule gives
\begin{align*}
\LL^N \big[\chi_R(\cdot) (\cdot-x_R)^\gamma \sigma(\cdot,\lambda_\kk)\big](x)
= \sum_{a,b, \beta,\eta,\nu,\gamma} C_{a,b,\beta,\eta,\nu} \,x^a (x-x_R)^{\gamma-\beta}\chi_R^{(\eta)}(x) \partial_x^\nu \sigma(x,\lambda_\kk),
\end{align*}
where the sum runs over indices such that  $|a|+|b|\le 2N,$ $\beta+\eta+\nu=b$ and $|\beta| \le |\gamma|.$
Set $N=\floor{(n+M)/2}+1;$  by applying the condition $\sigma\in \CN^{m,M},$ we have
\begin{align*}
 \Big\Vert \LL^N \big[\chi_R(\cdot)(\cdot-x_R)^\gamma \sigma(\cdot,\lambda_\kk)\big] \Big\Vert_{L^2(B)}
 &\lesssim  \sum_{a,b, \beta,\eta,\nu,\gamma}  \cro(x_R)^{|\gamma|-|\beta|-|\eta|} \sup_{x\in B}|x|^{|a|} \Big(\int_B|\partial_x^\nu \sigma(x,\lambda_\kk)|^2\,dx\Big)^{\frac{1}{2}} \\
 &\lesssim \sum_{a,b, \beta,\eta,\nu,\gamma} \cro(x_R)^{|\gamma|-|\beta| - |\eta|-|\nu|+n/2} (1+|x_R|)^{|a|} \lambda_\kk^{m/2}\\
 &\lesssim \lambda_\kk^{m/2} (1+|x_R|)^{2N-|\gamma|-n/2},
\end{align*}
where the sums run over indices such that $|a|+|b|\le 2N,$ $\beta+\eta+\nu=b$ and $|\beta| \le |\gamma|.$
Thus we obtain
\begin{align*}
\Big| \int_{\RR^n} \chi_R(x) (x-x_R)^\gamma \sigma(x,\lambda_\kk) h_\xi(x)\,dx\Big| 
\lesssim  \lambda^{-N+m/2}_{\kk}(1+|x_R|)^{2N-|\gamma|-n/2} \Vert h_\xi\Vert_{L^2(B)}.
\end{align*}
Inserting this into the estimate for $II,$ we get
\begin{align*}
|II|
&\lesssim |R|^{1/2}\,\QQ_{4^j}(x_R,x_R)^{1/2}\Big(\sum_{\kk\in I_j}\sum_{|\xi|=\kk} \Big| \lambda^{-N+m/2}_{\kk}(1+|x_R|)^{2N-|\gamma|-n/2}\Big|^2 \Vert h_\xi\Vert_{L^2(B)}^2\Big)^{1/2} \\
&\lesssim  |R|^{1/2} \Big(\f{1+|x_R|}{2^j}\Big)^{2N-n/2-|\gamma|}2^{j(m-|\gamma|-n/2)}\QQ_{4^j}(x_R,x_R)^{1/2}\Big(\sum_{\kk\in I_j}\sum_{|\xi|=\kk} \Vert h_\xi\Vert_{L^2(B)}^2\Big)^{1/2} .
\end{align*}
Observe that
\begin{align}\label{eq:Tcanc1}
\sum_{\kk\le 4^j}\sum_{|\xi|=\kk}\Vert h_\xi\Vert_{L^2(B)}^2 
=\int_{B}\sum_{\kk\le 4^j}\sum_{|\xi|=\kk} h_\xi(y)^2\,dy
=\int_{B} \QQ_{4^j}(y,y) \,dy.
\end{align}
This, the bounds in \eqref{QQ est} and the estimate \eqref{eq:tile control} give
\begin{align*}
|II|
&\lesssim  |R|^{1/2} \Big(\f{1+|x_R|}{2^j}\Big)^{2N-n/2-|\gamma|}2^{j(m-|\gamma|-n/2)} \big(2^{jn}\e_{\ve 4^j}(x_R)^2\big)^{1/2} (2^{jn}|B|)^{1/2}\\
&\sim \Big(\f{1+|x_R|}{2^j}\Big)^{2N-n-|\gamma|}2^{j(m-|\gamma|-n)} |R|^{1/2}2^{jn}\e_{\ve 4^j}(x_R)\\
&\sim \Big(\f{1+|x_R|}{2^j}\Big)^{2N-n-|\gamma|}2^{j(m-|\gamma|-n)} |R|^{-1/2}.
\end{align*}
Finally, since $|x_R|\lesssim 2^j$ and $2N =2\floor{(n+M)/2}+2 \ge n+M+1>n+M+\theta$, we conclude that $|II|$ can be controlled by the right hand side of \eqref{eq:Tcanc}.

\medskip

We next prove the theorem assuming  (b). We proceed along similar lines to the proof for (a). Since  $\sigma \in \SM^{m,\K,\N}_{1,\dd},$  we may utilize Theorem \ref{th: Tsmooth} and estimate term $I$ as above. 

Turning to term $II$, the assumption $\sigma\in \SM^{m,\K,\N}_{1,\dd}$  and \eqref{eq:growthfn2} give, for every $|\nu|\le \N$,
\begin{align*}
\Big(\int_B|\partial_x^\nu \sigma(x,\lambda_\kk)|^2\,dx\Big)^{1/2} 
\lesssim \lambda_\kk^{(m+\dd|\nu|)/2} \Big(\int_B|g(x,\kk)|^2\,dx\Big)^{1/2} 
\sim  \lambda_\kk^{(m+\dd|\nu|)/2} g(x_R,\kk) \cro(x_R)^{n/2}.
\end{align*}
Set $N=\ceil{\f{n+M+1}{2(1-\dd)}};$ then $\N\ge 2N$ and by the above estimate, the fact that $\delta\le 1$, and  inequality \eqref{eq:growthfn1} with some $\ve>4$ and $0\le \vk<1$, we have
\begin{align*}
 &\Big\Vert \LL^N \big[\chi_R(\cdot) (\cdot-x_R)^\gamma \sigma(\cdot,\lambda_\kk)\big] \Big\Vert_{L^2(B)}\\
 &\qquad\lesssim \sum_{a,b,\beta,\eta,\nu,\gamma} \cro(x_R)^{|\gamma|-|\beta| - |\eta|+n/2} (1+|x_R|)^{|a|} \lambda_\kk^{(m+\dd|\nu|)/2} g(x_R,\kk)\\ 
  &\qquad\le \sum_{a,b,\beta,\eta,\nu,\gamma} \cro(x_R)^{|\gamma|-|\beta| - |\eta|-|\nu|+n/2} (1+|x_R|)^{|a|} \lambda_\kk^{m/2}\big(\tfrac{\sqrt{\lambda_\kk}}{1+|x_R|}\big)^{\dd|\nu|} g(x_R,\kk)\\ 
 &\qquad\lesssim \lambda_\kk^{m/2} \max\Big\{1,\tfrac{\sqrt{\lambda_\kk}}{1+|x_R|}\Big\}^{2N\dd} (1+|x_R|)^{2N-|\gamma|-n/2} g(x_R,\kk)\\
 &\qquad\lesssim \lambda_\kk^{m/2} \max\Big\{1,\tfrac{\sqrt{\lambda_\kk}}{1+|x_R|}\Big\}^{2N\dd} (1+|x_R|)^{2N-|\gamma|-n/2} \e_{4\ve\kk}(x_R)^{-\vk},
 \end{align*}
 where the sums run over indices such that $|a|+|b|\le 2N,$ $\beta+\eta+\nu=b$ and  $|\beta| \le |\gamma|.$
  Therefore, we obtain
 \begin{align*}
&\Big| \int_{\RR^n} \chi_R(x) (x-x_R)^\gamma \sigma(x,\lambda_\kk) h_\xi(x)\,dx\Big| \\
&\qquad\lesssim  \lambda^{-N+m/2}_{\kk}\max\Big\{1,\tfrac{\sqrt{\lambda_\kk}}{1+|x_R|}\Big\}^{2N\dd} (1+|x_R|)^{2N-|\gamma|-n/2}\e_{4\ve\kk}(x_R)^{-\vk}\Vert h_\xi\Vert_{L^2(B)}.
\end{align*}
Note that since $|x_R|\lesssim 2^j,$ we have
$$ \max\Big\{1,\f{\sqrt{\lambda_\kk}}{1+|x_R|}\Big\}^{2N\dd} \lesssim \Big(\f{2^j}{1+|x_R|}\Big)^{2N\dd}\quad \forall \kk\in I_j.$$
Inserting the above two estimates into term $II$, gives
\begin{align*}
|II|
&\lesssim |R|^{1/2}\,\QQ_{4^j}(x_R,x_R)^{1/2}\\
&\quad\times\Big(\sum_{\kk\in I_j}\sum_{|\xi|=\kk} \Big| \lambda^{-N+m/2}_{\kk}\max\Big\{1,\tfrac{\sqrt{\lambda_\kk}}{1+|x_R|}\Big\}^{2N\dd} (1+|x_R|)^{2N-|\gamma|-n/2} \e_{4\ve\kk}(x_R)^{-\vk}\Big|^2 \Vert h_\xi\Vert_{L^2(B)}^2\Big)^{1/2} \\
&\lesssim  |R|^{1/2} \Big(\f{1+|x_R|}{2^j}\Big)^{2N(1-\dd)-n/2-|\gamma|}\\
&\quad\times 2^{j(m-|\gamma|-n/2)} \e_{\ve 4^j}(x_R)^{-\vk}\QQ_{4^j}(x_R,x_R)^{1/2} \Big(\sum_{\kk\in I_j}\sum_{|\xi|=\kk} \Vert h_\xi\Vert_{L^2(B)}^2\Big)^{1/2}.
\end{align*}
By \eqref{eq:Tcanc1}, \eqref{QQ est} and \eqref{eq:tile control}, we have
\begin{align*}
|II|
&\lesssim \Big(\f{1+|x_R|}{2^j}\Big)^{2N(1-\dd)-n-|\gamma|}2^{j(m-|\gamma|-n)} |R|^{1/2}2^{jn}\e_{\ve 4^j}(x_R)^{1-\vk}\\
&\lesssim \Big(\f{1+|x_R|}{2^j}\Big)^{2N(1-\dd)-n-|\gamma|}2^{j(m-|\gamma|-n)} |R|^{-1/2}.
\end{align*}
 Our proof is finished once we observe that our choice of $N$ ensures  $2N(1-\dd)\ge n+M+1>n+M+\theta$.
\end{proof}

\subsection{Pseudo-multipliers on distribution spaces}\label{sec: main thms}

In this section, we state and prove our results on boundedness properties of pseudo-multipliers  in Hermite Besov and Hermite Triebel--Lizorkin spaces. 

For smoothness index $\alpha> 0$ we have the following result. 
\begin{Theorem}[No cancellation of molecules required]\label{th: main1}
Let  $m\in\RR$ and $\N,\K\in\NN,$  and suppose that $\sigma\in \SM^{m,\K,\N}_{1,1}.$ 
Let  $\alpha>0,$  $0<q\le\infty$    and $0<p<\infty$ for Triebel--Lizorkin spaces or $0<p\le \infty$ for Besov spaces. If $\alpha, p, q$  satisfy
\begin{align}\label{eq:apq1}
n_{p,q}-n<\alpha<\N \qquad\text{and}\qquad n_{p,q}<\K,
\end{align}
then the operator $T_\sigma$ extends to a bounded operator from $A^{p,q}_{\alpha+m}(\LL)$ to  $A^{p,q}_\alpha(\LL).$

\end{Theorem}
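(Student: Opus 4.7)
The plan is to combine the frame decomposition of Theorem~\ref{th: frame} with the molecular synthesis estimate of Theorem~\ref{th:synth}. Given $f\in A^{p,q}_{\alpha+m}(\LL)$, I would fix admissible systems $\{\vp_j\}_{j\in\NN_0}$ and $\{\psi_j\}_{j\in\NN_0}$ satisfying \eqref{CRF 1}. Theorem~\ref{th: frame} gives
\[
f=\sum_{R\in\E}s_R\psi_R\quad\text{in }\sz'(\RR^n),\qquad s_R=\ip{f,\vp_R},
\]
together with the coefficient bound $\Vert\{s_R\}\Vert_{a^{p,q}_{\alpha+m}}\lesssim\Vert f\Vert_{A^{p,q}_{\alpha+m}}$. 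Writing $\wt{s}_R=2^{jm}s_R$ for $R\in\E_j$ and $m_R=c^{-1}\,2^{-jm}T_\sigma\psi_R$ for a uniform constant $c>0$ to be determined below, one formally has $T_\sigma f=\sum_{R\in\E}\wt{s}_R\,m_R$ in $\sz'(\RR^n)$. The proof reduces to showing that $\{m_R\}$ is a family of molecules for $A^{p,q}_\alpha(\LL)$ satisfying the relaxed hypotheses \eqref{eq: NM conditions} of Remark~\ref{rem:synth}; combining Theorem~\ref{th:synth} with the identity $\Vert\{\wt{s}_R\}\Vert_{a^{p,q}_\alpha}=\Vert\{s_R\}\Vert_{a^{p,q}_{\alpha+m}}$, which is immediate from the sequence-space definitions, then yields the conclusion.

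Because $\alpha>n_{p,q}-n$ by \eqref{eq:apq1}, I would take the molecular cancellation parameters to be $M=-1$ and $\theta=1$ (so no cancellation is required); the conditions \eqref{eq: NM conditions} reduce to $\wt{N}+\wt{\delta}>\alpha$ and $\wt\mu>n_{p,q}$. Select $\wt{N}\in\NN$ with $\wt N>\alpha$ and $\wt N\le\N$ (possible since $\alpha<\N$), set $\wt\delta=0$, and pick $\wt\mu$ with $n_{p,q}<\wt\mu\le\K$ (possible since $n_{p,q}<\K$). To verify that $m_R$ is an $(-1,1,\wt{N},0,\wt\mu)$-molecule associated to $R\in\E_j$, I would apply Theorem~\ref{th: Tsmooth} with decay exponent $N=\wt\mu$ and multi-indices $|\gamma|\le\wt N\le\N$ to obtain
\[
\bigl|\partial^\gamma(2^{-jm}T_\sigma\psi_R)(x)\bigr|\lesssim\f{|R|^{-1/2}\,2^{j|\gamma|}}{(1+2^j|x-x_R|)^{\wt\mu}}\,\e_{\ve 4^j}(x)^{1-\vk}.
\]
Estimate \eqref{eq: ebound} applied with $\beta=\wt N/(1-\vk)$ bounds $\e_{\ve 4^j}(x)^{1-\vk}\lesssim(1+|x|/2^j)^{-\wt N}$, establishing Definition~\ref{def: molecules}(i) once $c$ is taken to be the implicit constant; condition (ii) reduces to (i) via the triangle inequality (the factor $(|x-y|/2^{-j})^{\wt\delta}$ is unity since $\wt\delta=0$), and condition (iii) is vacuous because $M=-1$.

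With the molecules in place, Theorem~\ref{th:synth} (via Remark~\ref{rem:synth}) delivers
\[
\Vert T_\sigma f\Vert_{A^{p,q}_\alpha}\lesssim\Vert\{\wt{s}_R\}\Vert_{a^{p,q}_\alpha}=\Vert\{s_R\}\Vert_{a^{p,q}_{\alpha+m}}\lesssim\Vert f\Vert_{A^{p,q}_{\alpha+m}},
\]
completing the argument. The main technical point is the parameter bookkeeping just described: $\N$ must supply enough spatial smoothness to accommodate $\wt N>\alpha$, while $\K$ must supply enough finite-difference regularity in the spectral variable to accommodate the decay exponent $\wt\mu>n_{p,q}$ in Theorem~\ref{th: Tsmooth}. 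A secondary matter to address is the $\sz'(\RR^n)$-convergence of $T_\sigma f=\sum_R s_R T_\sigma\psi_R$, which can be settled by testing against Schwartz functions and exploiting the rapid decay of $T_\sigma\psi_R$ furnished by Theorem~\ref{th: Tsmooth}.
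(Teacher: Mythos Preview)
Your proposal is correct and follows essentially the same route as the paper: use the frame decomposition (Theorem~\ref{th: frame}) to write $f=\sum_R \ip{f,\vp_R}\psi_R$, show via Theorem~\ref{th: Tsmooth} and \eqref{eq: ebound} that $2^{-jm}T_\sigma\psi_R$ are (multiples of) $(-1,1,\wt N,\wt\delta,\wt\mu)$-molecules, and conclude with the molecular synthesis estimate of Theorem~\ref{th:synth} via Remark~\ref{rem:synth}(i). The only (inessential) difference is the bookkeeping: the paper takes $(\wt N,\wt\delta,\wt\mu)=(\N-1,1,\K)$ and handles Definition~\ref{def: molecules}(ii) by invoking Remark~\ref{re: molecules} with one extra derivative, whereas you take $\wt\delta=0$ and obtain (ii) directly from (i) by the triangle inequality.
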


The next result allows for negative values of $\alpha$ by taking $M$ large enough. 

\begin{Theorem}[Cancellation of molecules required]\label{th: main2}
Let $m\in\RR$, $M\in \NN_0$ and $\N,\K\in\NN.$ 
Assume that $\sigma:\RR^n\times\NN_0\to\mathbb{C}$ satisfies one of the following conditions:
\begin{enumerate}[\upshape(a)]
\item $\sigma\in \SM^{m,\K,\N}_{1,1}\cap \CN^{m,M},$
\item $\sigma \in \SM^{m,\K,\N}_{1,\dd}$ for some $0\le \dd<1$ and $\N\ge 2\ceil{\f{n+M+1}{2(1-\dd)}}$. 
\end{enumerate}
Let $\alpha\in\RR,$ $0<q\le\infty$  and  $0<p<\infty$ for Triebel--Lizorkin spaces or $0<p\le \infty$ for Besov spaces. If $\alpha, p, q$ satisfy
\begin{align}\label{eq:apq2}
n_{p,q}-n-M-1 < \alpha<\N  \qquad\text{and}\qquad
\max\{n_{p,q},n+M\}<\K, 
\end{align}
then the operator $T_\sigma$ extends to a bounded operator from $A^{p,q}_{\alpha+m}(\LL)$ to $A^{p,q}_\alpha(\LL).$
\end{Theorem}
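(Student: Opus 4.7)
The plan is to combine the frame decomposition of $A^{p,q}_{\alpha+m}(\LL)$ with the needlet-to-molecule estimates of Theorems~\ref{th: Tsmooth} and \ref{th: Tcanc}, and then invoke the molecular synthesis estimate of Theorem~\ref{th:synth}. Concretely, fix admissible systems $\{\vp_j\}_{j\ge 0}$ and $\{\psi_j\}_{j\ge 0}$ satisfying \eqref{CRF 1}. Given $f\in A^{p,q}_{\alpha+m}(\LL)$, Theorem~\ref{th: frame} furnishes the representation $f=\sum_{R\in\E} s_R \psi_R$ with $s_R=\ip{f,\vp_R}$ and $\Vert s\Vert_{a^{p,q}_{\alpha+m}}\lesssim \Vert f\Vert_{A^{p,q}_{\alpha+m}}$. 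For $R\in\E_j$, set $m_R=c\,2^{-jm}T_\sigma \psi_R$ and $\tilde{s}_R=c^{-1}2^{jm} s_R$ for a suitable universal constant $c>0$, so that formally $T_\sigma f=\sum_R \tilde{s}_R m_R$ and, directly from the definitions of the sequence spaces, $\Vert \tilde{s}\Vert_{a^{p,q}_\alpha}\sim \Vert s\Vert_{a^{p,q}_{\alpha+m}}$.

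The heart of the argument is to verify that $\{m_R\}_{R\in\E}$ is a family of $(M,\theta,N,\delta,\mu)$-molecules for appropriate parameters. Since $\psi_R$ is a constant multiple of a needlet (Lemma~\ref{lem: needlets}) and $\sigma\in \SM^{m,\K,\N}_{1,1}$ in either case (noting $\SM^{m,\K,\N}_{1,\dd}\subset \SM^{m,\K,\N}_{1,1}$ when $\dd\le 1$), Theorem~\ref{th: Tsmooth} gives, for any $|\gamma|\le \N$ and any $1\le \mu\le \K$,
\begin{align*}
|\partial_x^\gamma T_\sigma \psi_R(x)| \lesssim \f{|R|^{-1/2}2^{j(m+|\gamma|)}}{(1+2^j|x-x_R|)^\mu}\, \e_{\ve 4^j}(x)^{1-\vk}.
\end{align*}
After dividing by $2^{jm}$, dominating $\e_{\ve 4^j}(x)^{1-\vk}$ by an arbitrary power of $(1+|x|/2^j)^{-1}$ via \eqref{eq: ebound} yields condition (i) of Definition~\ref{def: molecules} for $|\gamma|\le N$; condition (ii) at order $N$ then follows from (i) at order $N+1$ via Remark~\ref{re: molecules}. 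For the cancellation condition (iii) at orders $|\gamma|\le M$, Theorem~\ref{th: Tcanc} (applicable under either hypothesis (a) or (b), since $\K\ge n+M+1$) supplies exactly the estimate required after scaling by $2^{-jm}$.

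What remains is to match parameters. By Remark~\ref{rem:synth}(i), Theorem~\ref{th:synth} applies as soon as $N+\delta>\alpha$, $n+M+\theta+\alpha>n_{p,q}$, and $\mu>\max\{n_{p,q},n+M+\theta\}$. Under \eqref{eq:apq2}, the bound $\alpha<\N$ lets us take $N=\max\{\floor{\alpha},0\}$ and $\delta\in[0,1)$ with $N+\delta>\alpha$, while keeping $N+1\le \N$ so that Theorem~\ref{th: Tsmooth} supplies enough derivatives; the bound $\alpha>n_{p,q}-n-M-1$ permits $\theta\in(0,1)$ sufficiently close to $1$ with $n+M+\theta+\alpha>n_{p,q}$; and $\K>\max\{n_{p,q},n+M\}$ allows $\mu\in(\max\{n_{p,q},n+M+\theta\},\K]$. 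Theorem~\ref{th:synth} then gives
\begin{align*}
\Vert T_\sigma f\Vert_{A^{p,q}_\alpha}\lesssim \Vert \tilde{s}\Vert_{a^{p,q}_\alpha}\lesssim \Vert s\Vert_{a^{p,q}_{\alpha+m}}\lesssim \Vert f\Vert_{A^{p,q}_{\alpha+m}},
\end{align*}
with convergence of $\sum_R \tilde{s}_R m_R$ in $\sz'(\RR^n)$ obtained by applying the same estimate to finite partial sums. The main obstacle is not conceptual but bookkeeping: each of the three input theorems imposes its own constraints on $(M,\theta,N,\delta,\mu,\K,\N)$, and one must verify that \eqref{eq:apq2} leaves enough simultaneous room; the substantive work has already been done in the needlet-to-molecule estimates of Section~\ref{sec: symbols and needlets}.
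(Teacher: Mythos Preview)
Your proof is correct and follows essentially the same route as the paper: frame decomposition via Theorem~\ref{th: frame}, then show $2^{-jm}T_\sigma\psi_R$ is a constant multiple of an $(M,\theta,N,\delta,\mu)$-molecule using Theorems~\ref{th: Tsmooth} and \ref{th: Tcanc}, then apply Theorem~\ref{th:synth} through Remark~\ref{rem:synth}(i). The only cosmetic difference is in the molecule parameters: the paper simply takes $(M,\theta,\N-1,1,\K)$, using all $\N$ derivatives from Theorem~\ref{th: Tsmooth} and $\delta=1$, which immediately gives $N+\delta=\N>\alpha$; your more elaborate choice $N=\max\{\floor{\alpha},0\}$ with $\delta\in(\alpha^*,1)$ also works but is unnecessary bookkeeping.
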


Before proving Theorems~\ref{th: main1} and \ref{th: main2}, we state a direct corollary regarding the classes $\SM^{m,\infty,\infty}_{1,\dd}$ for $0\le \delta\le 1.$

\begin{Corollary}\label{coro:main} Let $m\in\RR,$  $\alpha\in\RR,$ $0<q\le\infty,$    $0<p<\infty$ for Triebel--Lizorkin spaces or $0<p\le \infty$ for Besov spaces. Assume that $\sigma:\RR^n\times\NN_0\to\mathbb{C}$ satisfies one of the following conditions:
\begin{enumerate}[\upshape(a)]
\item $\sigma\in \SM^{m,\infty,\infty}_{1,1}\cap \CN^{m,\infty},$
\item $\sigma \in \SM^{m,\infty,\infty}_{1,\dd}$ for some $0\le \dd<1,$
\item $\sigma\in \SM^{m,\infty,\infty}_{1,1}$ and $\alpha>n_{p,q}-n.$
\end{enumerate}
 Then the operator $T_\sigma$ extends to a bounded operator from $A^{p,q}_{\alpha+m}(\LL)$ to $A^{p,q}_\alpha(\LL).$
\end{Corollary}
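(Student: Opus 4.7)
The plan is to deduce Corollary~\ref{coro:main} directly from Theorems~\ref{th: main1} and \ref{th: main2} by a suitable selection of the auxiliary integer parameters $M,\N,\K$. Since Definition~\ref{def: smooth} with $\N=\K=\infty$ merely asserts that the estimate \eqref{eq:sm} holds for every multi-index $\nu$ and every $\kappa\in\NN_0$, we have $\SM^{m,\infty,\infty}_{1,\dd}\subset\SM^{m,\K,\N}_{1,\dd}$ for any finite $\K,\N\in\NN$. Analogously, Definition~\ref{def: canc} gives $\CN^{m,\infty}\subset\CN^{m,M}$ for every $M\in\NN_0$, since \eqref{eq:cn1} is assumed for all $\gamma$, in particular for those with $|\gamma|\le 2\floor{(n+M)/2}+2$. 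Thus the corollary reduces to matching the integer conditions \eqref{eq:apq1} or \eqref{eq:apq2} to the given data $\alpha,p,q$.

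For case~(c) I would simply pick $\N,\K\in\NN$ with $\N>\alpha$ and $\K>n_{p,q}$. The hypothesis $\alpha>n_{p,q}-n$ (which forces $\alpha>0$, as $n_{p,q}\ge n$) then supplies the remaining half of \eqref{eq:apq1}, and Theorem~\ref{th: main1} applied to $\sigma\in\SM^{m,\K,\N}_{1,1}$ yields the claimed boundedness $T_\sigma:A^{p,q}_{\alpha+m}(\LL)\to A^{p,q}_{\alpha}(\LL)$.

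For cases~(a) and (b) the argument is essentially the same, but now $M$ must be used to absorb the possible negativity of $\alpha$. I would first pick $M\in\NN_0$ with $M>n_{p,q}-n-1-\alpha$, which is possible for any real $\alpha$ and secures the first inequality in \eqref{eq:apq2}. Next I would pick $\K\in\NN$ with $\K>\max\{n_{p,q},n+M\}$. For the smoothness parameter I would take $\N\in\NN$ with $\N>\alpha$ in case~(a), and $\N\in\NN$ with $\N>\alpha$ and $\N\ge 2\ceil{(n+M+1)/(2(1-\dd))}$ in case~(b). Under these choices, $\sigma\in\SM^{m,\K,\N}_{1,1}\cap\CN^{m,M}$ in case~(a) and $\sigma\in\SM^{m,\K,\N}_{1,\dd}$ with the prescribed bound on $\N$ in case~(b), so the corresponding part of Theorem~\ref{th: main2} applies and finishes the proof.

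The only point that needs any verification is the routine bookkeeping above, together with the class inclusions derived from Definitions~\ref{def: smooth} and~\ref{def: canc}. I do not expect any genuine analytic obstacle: the corollary is exactly the clean statement obtained by letting the finite smoothness and cancellation parameters of Theorems~\ref{th: main1} and \ref{th: main2} go to infinity, and the three listed conditions (a)--(c) are precisely those under which the integer parameters can be chosen compatibly for \emph{every} $\alpha\in\RR$ (cases (a) and (b)) or for the positive-smoothness regime $\alpha>n_{p,q}-n$ (case (c)).
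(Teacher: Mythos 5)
Your proof is correct and coincides with what the paper treats as an immediate consequence of Theorems~\ref{th: main1} and \ref{th: main2} (the paper offers no written-out argument for Corollary~\ref{coro:main}, calling it a direct corollary). The key inclusions $\SM^{m,\infty,\infty}_{1,\dd}\subset\SM^{m,\K,\N}_{1,\dd}$ and $\CN^{m,\infty}\subset\CN^{m,M}$, together with the observation that $\alpha>n_{p,q}-n\ge 0$ already implies $\alpha>0$ in case (c), and the parameter choices $M>n_{p,q}-n-1-\alpha$, $\K>\max\{n_{p,q},n+M\}$, $\N>\alpha$ (augmented by $\N\ge 2\ceil{(n+M+1)/(2(1-\dd))}$ in case (b)), are exactly the right bookkeeping.
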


We first prove Theorem~\ref{th: main2} and then briefly sketch the proof of Theorem~\ref{th: main1}, which follows similarly.

\begin{proof}[Proof of Theorem \ref{th: main2}]
Fix a $\theta\in(0,1)$ such that $\alpha>n_{p,q}-n-M-\theta$ and $\K>\max\{n_{p,q},n+M+\theta\}$, which is possible from our assumption on $\alpha$ in \eqref{eq:apq2}. 

Let $\{\psi_j\}_{j\in \NN_0}$ be an admissible system;
we will  first show that if $j\in \NN_0$ and  $R\in\E_j$, then $2^{-jm}T_\sigma \psi_R$ is a constant multiple of a $(M,\theta, \N-1,1,\K)$-molecule for $A^{p,q}_\alpha(\LL)$ under assumptions (a) or (b).

Firstly, note that  the smoothness estimates (parts (i) and (ii) in Definition~\ref{def: molecules}) follow from Theorem \ref{th: Tsmooth} and \eqref{eq: ebound}. Indeed, 
since $\sigma\in\SM^{m,\K,\N}_{1,\dd},$ Theorem \ref{th: Tsmooth} implies that there exists $0\le \vk<1$ and $\varepsilon>4$ such that for $\gamma\in \NN_0^n$ satisfying $0\le |\gamma|\le \N,$ it holds that
\begin{align*}
|2^{-jm}\partial_x^\gamma T_\sigma\psi_R(x)|
&\lesssim \f{|R|^{-1/2}2^{j|\gamma|}}{(1+2^j|x-x_R|)^\K} \e_{\ve 4^j}(x)^{1-\vk} \qquad  \forall j\in\NN_0,  R\in \E_j,  x\in \RR^n.
\end{align*}
Then, by \eqref{eq: ebound}, and for $\beta\ge 0,$
\begin{align}\label{Tsmooth}
|2^{-jm}\partial_x^\gamma T_\sigma\psi_R(x)|
&\lesssim \f{|R|^{-1/2}2^{j|\gamma|}}{(1+2^j|x-x_R|)^\K} \Big(1+\f{|x|}{2^j}\Big)^{-\beta} \quad \forall j\in\NN_0,  R\in \E_j, x\in \RR^n,
\end{align}
where $0\le |\gamma|\le \N.$
Taking $\beta=\N$ in \eqref{Tsmooth} gives the smoothness estimates in part (i) of Definition~\ref{def: molecules} for $0\le |\gamma|\le \N-1$; taking $\beta=\N$ and $|\gamma|=\N$ in \eqref{Tsmooth} and using Remark~\ref{re: molecules} lead to the smoothness estimates in part (ii) of Definition~\ref{def: molecules} for $|\gamma|=\N-1.$

Secondly, from Theorem \ref{th: Tcanc} we have the cancellation estimates (part (iii) of Definition~\ref{def: molecules})
\begin{align}\label{Tcanc}
\Big|\int_{\RR^n}(x-x_R)^\gamma 2^{-jm}T_\sigma\psi_R(x)\,dx\Big|
\lesssim |R|^{-1/2}2^{-j(n+|\gamma|)} \Big(\f{1+|x_R|}{2^j}\Big)^{M+\theta-|\gamma|},
\end{align}
for  $0\le|\gamma|\le M,$  $j\in \NN_0,$ and $R\in \E_j.$

Notice that  \eqref{eq:apq2} and the facts that $\alpha>n_{p,q}-n-M-\theta$ and $0<\theta<1$  ensure that 
\begin{align*} \N>\alpha, && n+M+\theta+\alpha>n_{p,q}, &&\K>\max\{n_{p,q}, n+M+\theta\}.\end{align*}
We may then apply Theorem \ref{th:synth} (see Remark \ref{rem:synth} (i)) to yield
\begin{align}\label{Tsynth}
 \Big\Vert \sum_{R\in \E} s_R 2^{-jm}T_\sigma\psi_R \Big\Vert_{A^{p,q}_\alpha}\lesssim \Vert \{s_R\}_{R\in \E}\Vert_{a^{p,q}_{\alpha}}
 \end{align}
for any sequence of numbers $\{s_R\}_{R\in \E}$.

Using Theorem \ref{th: frame} (c) and the linearity of $T_\sigma,$ we have
\begin{align*}
	\Vert T_\sigma f \Vert_{A^{p,q}_{\alpha}}
	&=\Big\Vert \sum_{R\in\E} 2^{jm}\ip{f,\varphi_R} 2^{-jm} T_\sigma(\psi_R)\Big\Vert_{A^{p,q}_{\alpha}},
		\end{align*}
where the admissible systems $\{\vp_j\}_{j\in \NN_0}$ and $\{\psi_j\}_{j\in \NN_0}$ satisfy \eqref{CRF 1}.
By \eqref{Tsynth}, the definition of sequence spaces and Theorem \ref{th: frame} (b), we finally have
\begin{align*}	
	\Vert T_\sigma f \Vert_{A^{p,q}_{\alpha}}
\lesssim \big\Vert  \{2^{jm}\ip{f,\varphi_R}\}_{R\in \E} \big\Vert_{a^{p,q}_{\alpha}} 
	= \big\Vert  \{\ip{f,\varphi_R}\}_{R\in \E} \big\Vert_{a^{p,q}_{\alpha+m}} 
	\lesssim \Vert f\Vert_{A^{p,q}_{\alpha+m}}.
	\end{align*}
This completes the proof of Theorem \ref{th: main2}. 
\end{proof}

\begin{proof}[Proof of Theorem \ref{th: main1}]
The proof of Theorem \ref{th: main1} is similar to the proof of Theorem \ref{th: main2} except that  only the estimates in parts (i) and (ii) of Definition~\ref{def: molecules} hold. Thus $2^{-jm}T_\sigma\psi_R$ are $(-1,1,\N-1,1,\K)$-molecules. The conditions  \eqref{eq:apq1}   and the fact that $n_{p,q}\ge n$ ensure 
\begin{align*} \N>\alpha, && \alpha+n>n_{p,q}, && \K>\max\{n_{p,q}, n\}. \end{align*}
Therefore, \eqref{Tsynth} holds in view of Remark \ref{rem:synth} (i). The rest of the proof follows as in the proof of Theorem \ref{th: main2}.
\end{proof}
\section{Examples and applications}\label{sec: applications}

In this section, we present examples and applications of the results obtained in Section~\ref{sec: PDOs}. We start by giving examples of symbols in the classes $\SM^{m,\K,\N}_{\rho,\delta}.$ We next  consider implications of Theorem \ref{th: main2}  for boundedness properties of pseudo-multipliers on Lebesgue spaces $L^p(\RR^n)$  with $1<p<\infty$  (Corollaries~\ref{cor:Lp1} and \ref{cor:Lp2}) and we compare them with existing results in the literature;  as a byproduct, we obtain weighted estimates for  pseudo-multipliers with symbols  of order zero (Corollary~\ref{cor:wLp}). For $0<p\le 1,$ we show that Theorem~\ref{th: main2} leads to boundedness of pseudo-multipliers in the setting of Hermite local Hardy spaces (Corollary~\ref{cor:hardy}). We also comment on boundedness properties of Hermite multipliers. Finally, we present an example of a linearization process of a non-linear problem inspired by the works \cite{MR639462} and \cite{MR631751} (Theorem~\ref{thm: appnonlin}) that along with Theorem~\ref{th: main1} implies that Hermite Besov spaces and Hermite Triebel--Lizorkin spaces are closed under non-linearities (Corollary~\ref{coro: appnonlin}).

\begin{Example}[A symbol in $\SM^{m,\infty,\infty}_{\rr,\dd}$]\label{ex: 1}
Consider $\sigma(x,\xs)=\Phi(x)\Psi(\xs)$ where $\Phi\in C_0^\infty(\RR^n)$ and $\Psi\in \mathscr{S}(\RR)$. From the mean-value property 
$|\diff_\xs^\kappa \Psi(\xs)|=|\Psi^{(\kappa)}(\nu)|$
for some $\nu\in(\xs,\xs+\kappa)$; it then follows  that $\sigma\in\SM^{m,\infty,\infty}_{\rr,\dd}$ for any $m,\rr,\dd$.
\end{Example}

\begin{Example}[A symbol in $\SM^{0,\infty,\infty}_{1,\dd}$]\label{ex: 2}
For  $0\le \dd\le 1,$ let
$$\sigma(x,\xs)=\sum_{j\in\NN} \sigma_j(x)\,\vph_j(\sqrt{\lambda_\xs})$$
where $\{\vph_j\}_{j\in \NN_0}$ is an admissible system and $\sigma_j\in C^\infty(\RR^n)$ satisfies
$$ |\partial^\nu \sigma_j(x)|\le  C_\nu\,2^{j\dd|\nu|}\Big(1+\f{|x|}{2^j}\Big)^\beta, \qquad \forall\;\nu\in\NN^n_0,  j\in \NN, x\in \RR^n,$$
for some $\beta\ge 0.$
We next show that $\sigma\in\SM^{0,\infty,\infty}_{1,\dd}$. 

For $j\in \NN,$ $\vph_j(\sqrt{\lambda_\xs})$ is supported, as a function of $\xs\in \NN_0,$ in $I_j=[\frac{1}{2}4^{j-2}-\floor{\f{n}{2}}, \frac{1}{2}4^j-\ceil{\f{n}{2}}]\cap \NN_0;$ for each fixed $\xs$, the sum contains at most five  nonzero terms, and these occur for those $j$ such that $\lambda_\xs\sim 4^j$ (more precisely, $4^{j-2}\le \lambda_\xs\le 4^j$).
For a given $\nu\in \NN_0^n,$ we have
    $$ |\partial^\nu \sigma_j(x)|\lesssim  \lambda_\xs^{\dd|\nu|/2}\Big(1+\f{|x|}{\sqrt{\lambda_\xs}}\Big)^\beta,\quad \lambda_\xs\sim 4^j.$$
    From Lemma \ref{lem: hoppe} (a), given $\kappa\in \NN_0$ and $N>\kappa,$
    $$ |\diff_\xs^\kappa \vph_j(\sqrt{\lambda_\xs})|\lesssim \lambda_\xs^{N/2-\kappa}2^{-jN}\Vert \vph^{(N)}\Vert_\infty \sim \lambda_\xs^{-\kappa},\qquad \lambda_\xs\sim 4^j.$$
    Therefore, we obtain
        \begin{align*}
         |\partial_x^\nu \diff_\xs^\kappa \sigma(x,\xs)|
         \lesssim \lambda_\xs^{\dd|\nu|/2-\kappa}\Big(1+\f{|x|}{\sqrt{\lambda_\xs}}\Big)^\beta\sim (1+\sqrt{\xs})^{\dd|\nu|-2\kappa}g(x,\xs)
         \end{align*}
where 
$$ g(x,\xs)=\Big(1+\f{|x|}{1+\sqrt{\xs}}\Big)^\beta. $$
It follows immediately from \eqref{eq: ebound} that $g$ satisfies \eqref{eq:growthfn1} and \eqref{eq:growthfn2}.
\end{Example}

\begin{Example}[A symbol in $\SM^{0,\infty,\infty}_{1,1}\cap \CN^{0,M},$ $M\in \NN_0$]\label{ex: 3}
Consider Example~\ref{ex: 2} with $\dd=1$ and assume further that $\sigma_j$ is supported in the set $\{x:\; 2^{j}\le |x|< 2^{j+1}\}$. Then $\sigma\in \SM^{0,\infty,\infty}_{1,1}\cap \CN^{0,M}$. Indeed, from Example~\ref{ex: 2} we have $\sigma\in\SM^0_{1,1};$ thus it remains to be checked that $\sigma\in\CN^{0,M}$. We first note if $|x|\ge2$ then there is a unique $j\in \NN$ such that  $x\in\supp \sigma_j$ and we have 
$$ 2^{j-1}\le 1+|x|\le 2^{j+2};$$
therefore, $\cro(x)\sim 2^{-j}$. For each such $x$ and any $\gamma\in\NN_0^n$, we obtain 
\begin{align*}
|\partial_x^\gamma \sigma(x,\xs)|
\le \sum_{\nu\in \NN}|\partial^\gamma\sigma_\nu(x)|\,|\vph_\nu(\sqrt{\lambda_\xs})| \lesssim 2^{j|\gamma|}\sim (1+|x|)^{|\gamma|}=\cro(x)^{-|\gamma|}.
\end{align*}
If $|x| \le 2$ then $\sigma(x,\xs)=0$ and the above estimate holds trivially. Thus for any $x\in\RR^n$ we get
\begin{align*}
\Big(\aver{B(x,\cro(x))}|\partial_y^\gamma \sigma(y,\xs)|^2\,dy\Big)^{1/2}\lesssim \Big(\aver{B(x,\cro(x))}\cro(y)^{-2|\gamma|}\,dy\Big)^{1/2}\sim \cro(x)^{-|\gamma|},
\end{align*}
where in the last step we used property \eqref{eq:rhoproperty}. This gives that $\sigma\in\CN^{0,M}$. 

\end{Example}

\subsection{Results for the $L^p$ scale, $1<p<\infty$}\label{sec:lebesgue}
In this section we present consequences  of Theorem \ref{th: main2}  for boundedness properties of pseudo-multipliers on $L^p(\RR^n)$  for  $1<p<\infty$.
We first recall the relation 
\begin{align}\label{eq:Lp equiv} F^{p,2}_0(\LL) = L^p(\RR^n),\qquad 1<p<\infty,\end{align}
with equivalent norms; see \cite[Proposition 5]{MR2399106} or \cite[Theorem 1.2]{MR1330218} for dimension 1.
Combining \eqref{eq:Lp equiv} with Theorem \ref{th: main2} we obtain the following result.
\begin{Corollary}[$L^p$ boundedness for symbols with growth]\label{cor:Lp1}
Assume that $\sigma:\RR^n\times\NN_0\to\mathbb{C}$ satisfies one of the following conditions:
\begin{enumerate}[\upshape(a)]
\item $\sigma\in \SM^{0,\K,1}_{1,1}\cap \CN^{0,0}$  for some $\K\ge n+1,$
\item $\sigma \in \SM^{0,\K,\N}_{1,\dd}$ for some $0\le \dd<1$, $\N\ge 2\ceil{\f{n+1}{2(1-\dd)}}$ and $\K \ge n+1$.
\end{enumerate}
Then $T_\sigma$ extends to a bounded operator on  $L^p(\RR^n)$ for all $1<p<\infty$.
\end{Corollary}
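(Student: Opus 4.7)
The plan is to reduce the claim to a direct application of Theorem~\ref{th: main2} combined with the identification $F^{p,2}_0(\LL) = L^p(\RR^n)$ valid for $1 < p < \infty$ with equivalent norms. Since $T_\sigma$ is initially defined on Schwartz functions via the spectral expansion \eqref{eq:T}, and $\sz(\RR^n)$ is dense in $F^{p,2}_0(\LL)$ for $1 < p < \infty$, it suffices to verify that $T_\sigma$ is bounded from $F^{p,2}_0(\LL)$ into itself, since the extension to $L^p$ follows from the norm equivalence.

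To this end, I would apply Theorem~\ref{th: main2} in the Triebel--Lizorkin case with parameters $m = 0$, $\alpha = 0$, $q = 2$, and $M = 0$. Since $p > 1$ and $q = 2 > 1$, the index
$$ n_{p,q} \;=\; \frac{n}{\min\{1,p,q\}} \;=\; n. $$
The parameter conditions \eqref{eq:apq2} then read $n - n - 0 - 1 < 0 < \N$, which holds whenever $\N \ge 1$, and $\max\{n, n\} < \K$, which reduces to $\K \ge n+1$. Both requirements are met under either hypothesis (a) or (b) of the corollary.

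Under hypothesis (a), $\sigma \in \SM^{0,\K,1}_{1,1} \cap \CN^{0,0}$ with $\K \ge n+1$ satisfies Theorem~\ref{th: main2}(a) directly with $M = 0$ (note $\K \ge n+1 = n + M + 1$ and $\N = 1 > 0 = \alpha$). Under hypothesis (b), the choice $M = 0$ in Theorem~\ref{th: main2}(b) requires $\N \ge 2\ceil{(n+1)/(2(1-\dd))}$ and $\K \ge n+1$, which are exactly the assumptions imposed. In either case, Theorem~\ref{th: main2} yields a bounded extension $T_\sigma : F^{p,2}_0(\LL) \to F^{p,2}_0(\LL)$, and combining with $F^{p,2}_0(\LL) = L^p(\RR^n)$ gives the desired $L^p$ boundedness.

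There is no serious obstacle here; the entire content of the proof is a careful matching of parameters to Theorem~\ref{th: main2}. The one point worth checking precisely is that $n_{p,q}$ collapses to $n$ in this Triebel--Lizorkin setting because both $p$ and $q$ exceed $1$; this is what permits the endpoint $\alpha = 0$ (and hence the $L^p$ scale) to be reached with only the minimal first-order cancellation $\CN^{0,0}$ in case (a), or with no cancellation but $\dd < 1$ and sufficient smoothness $\N$ in case (b).
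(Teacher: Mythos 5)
Your proposal is correct and follows the paper's own argument essentially verbatim: set $m=0$, $\alpha=0$, $q=2$, $M=0$, observe $n_{p,q}=n$ for $p>1$, $q=2$, verify \eqref{eq:apq2}, apply Theorem~\ref{th: main2}, and conclude via the identification $F^{p,2}_0(\LL)=L^p(\RR^n)$. No discrepancies to note.
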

\begin{proof} For $1<p<\infty$ and $q=2,$ we have  $n_{p,q}=n$.  The conditions in \eqref{eq:apq2} are satisfied with $n_{p,q}=n,$ $M=0,$ $\alpha=0,$ $\N\in\NN$  and $\K\ge n+1.$ Applying  Theorem \ref{th: main2} with those values and with $m=0$   yield the desired result through  the use of \eqref{eq:Lp equiv}.
\end{proof}

\subsubsection{Comparisons with other results on $L^p$}
In this section we compare our results in Corollary \ref{cor:Lp1} with existing results in the literature and give some further consequences. 
The works \cite{MR3280055,CR,MR1343690} address symbols that satisfy \eqref{eq:sm} but without admissible growth (i.e. $g\equiv 1$). In order to  continue the discussion we define the following class.
\begin{Definition}[Symbols without admissible  growth]\label{def:sc}
Let $m\in \RR$, $\rr,\dd\ge 0$, and $\N, \K\in \NN_0\cup\{\infty\}$. We say the symbol $\sigma: \RR^n\times \NN_0\to\mathbb{C}$ satisfies $\sigma \in \SC^{m,\K,\N}_{\rr,\dd}$ if 
\begin{align}\label{eq:sc}
	| \partial^\nu_x \diff^\kappa_\kk \sigma(x,\xs)| \lesssim (1+\sqrt{\xs})^{m-2\rr\kappa+\dd|\nu|}  \qquad \forall (x,\xs)\in \RR^n\times\NN_0
\end{align}
for $\nu\in \NN_0^n$ such that $0\le |\nu|\le \N$ and $0\le \kappa\le \K$. If $\N=\infty$ or $\K=\infty,$ the implicit constant in \eqref{eq:sc} may depend on $\nu$ or $\kappa$ respectively.
\end{Definition} 
Since $\SC^{m,\K,\N}_{\rr,\dd}\subset \SM^{m,\K,\N}_{\rr,\dd}$ we have the following immediate consequence of Corollary \ref{cor:Lp1}.
\begin{Corollary}[$L^p$ boundedness for symbols without growth]\label{cor:Lp2}
Assume that $\sigma:\RR^n\times\NN_0\to\mathbb{C}$ satisfies one of the following conditions:
\begin{enumerate}[\upshape(a)]
\item $\sigma\in \SC^{0,\K,1}_{1,1}\cap \CN^{0,0}$  for some $\K\ge n+1,$
\item $\sigma \in \SC^{0,\K,\N}_{1,\dd}$ for some $0\le \dd<1$, $\N\ge 2\ceil{\f{n+1}{2(1-\dd)}}$ and $\K \ge n+1$.
\end{enumerate}
Then $T_\sigma$ extends to a bounded operator  on $L^p(\RR^n)$ for all $1<p<\infty$.
\end{Corollary}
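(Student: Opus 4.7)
The proof is essentially a one-line reduction to Corollary \ref{cor:Lp1}, and the author's own framing (``immediate consequence'') signals as much. My plan is simply to verify the inclusion $\SC^{m,\K,\N}_{\rr,\dd} \subset \SM^{m,\K,\N}_{\rr,\dd}$ explicitly and then invoke Corollary \ref{cor:Lp1}.

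The verification amounts to checking that the constant function $g \equiv 1$ is an admissible growth function in the sense of \eqref{eq:growthfn1}--\eqref{eq:growthfn2}. Condition \eqref{eq:growthfn2} is trivial since $g$ is constant. For condition \eqref{eq:growthfn1}, take $\vk = 0$ and any $\ve>16$: then $\e_{\ve\xs}(x)^{-\vk} = 1 \ge g(x,\xs)$, so the bound holds. Hence any symbol satisfying the pointwise bounds \eqref{eq:sc} also satisfies \eqref{eq:sm} with this choice of $g$, yielding $\SC^{m,\K,\N}_{\rr,\dd} \subset \SM^{m,\K,\N}_{\rr,\dd}$ for all admissible parameters.

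With this inclusion in hand, hypothesis (a) of the statement implies $\sigma \in \SM^{0,\K,1}_{1,1} \cap \CN^{0,0}$ with $\K \ge n+1$, which is hypothesis (a) of Corollary \ref{cor:Lp1}; likewise hypothesis (b) implies hypothesis (b) of Corollary \ref{cor:Lp1}. In either case Corollary \ref{cor:Lp1} gives that $T_\sigma$ extends to a bounded operator on $L^p(\RR^n)$ for all $1<p<\infty$, which is the desired conclusion.

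There is no genuine obstacle here: the only point that requires attention is ensuring that the parameter thresholds in the statement of Corollary \ref{cor:Lp2} agree exactly with those in Corollary \ref{cor:Lp1} (in particular that the lower bounds on $\N$ and $\K$ match in both cases), which they do by inspection.
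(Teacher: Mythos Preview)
Your proof is correct and follows exactly the paper's approach: the paper simply states the inclusion $\SC^{m,\K,\N}_{\rr,\dd}\subset \SM^{m,\K,\N}_{\rr,\dd}$ and declares the result an immediate consequence of Corollary~\ref{cor:Lp1}, and you have merely filled in the (trivial) verification that $g\equiv 1$ is an admissible growth function.
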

\noindent In addition, by invoking \cite[Theorem 1.4]{MR3280055}, we obtain the following weighted estimates, where $A_p$ denotes the Muckenhoupt class of weights.
\begin{Corollary}[Weighted $L^p$ boundedness]\label{cor:wLp}
If   $\sigma\in \SC^{0,\K,1}_{1,0}\cap \CN^{0,0}$  with $\K\ge n+1$ (which implies assumption (a) of Corollary \ref{cor:Lp2}) or $\sigma $ satisfies assumption (b) of Corollary \ref{cor:Lp2}  with $\delta=0,$  the operator $T_\sigma$ is bounded on $L^p_w(\RR^n)$ for every $w\in A_p$ and $1<p<\infty$.
\end{Corollary}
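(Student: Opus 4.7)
The plan is to deduce both cases of the corollary from \cite[Theorem 1.4]{MR3280055}, which (in essence) asserts that whenever $T_\sigma$ is bounded on $L^2(\RR^n)$ and $\sigma$ obeys the H\"ormander-type derivative estimate \eqref{eq: zerotwozero} in the ranges $0\le \kappa\le n+1$ and $0\le|\nu|\le 1$, then $T_\sigma$ extends to a bounded operator on $L^p_w(\RR^n)$ for every $w\in A_p$ and every $1<p<\infty$. Thus it suffices to verify two things: (i) the $L^2(\RR^n)$ boundedness of $T_\sigma$, and (ii) the symbol estimate \eqref{eq: zerotwozero} with the indicated ranges.

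For step (i), I would apply Corollary~\ref{cor:Lp2} at $p=2$. Under hypothesis~(a) of the present corollary, the elementary inclusion $\SC^{0,\K,1}_{1,0}\subset \SC^{0,\K,1}_{1,1}$ (requiring decay with $\delta=0$ is strictly stronger than with $\delta=1$) places $\sigma$ in the class covered by Corollary~\ref{cor:Lp2}(a), which yields boundedness on $L^p(\RR^n)$ for all $1<p<\infty$, in particular for $p=2$. Under hypothesis~(b), the assumption is verbatim that of Corollary~\ref{cor:Lp2}(b) with $\delta=0$, and the same $L^2$ conclusion follows.

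For step (ii), the defining bound for $\SC^{0,\K,\N}_{1,0}$ reads
\begin{align*}
|\partial_x^\nu\diff^\kappa_\xs\sigma(x,\xs)|\lesssim (1+\sqrt{\xs})^{-2\kappa}\qquad\forall (x,\xs)\in\RR^n\times\NN_0,
\end{align*}
valid for $0\le|\nu|\le\N$ and $0\le\kappa\le\K$, which is equivalent up to constants to the estimate $(1+\xs)^{-\kappa}$ appearing in \eqref{eq: zerotwozero}. Both of our hypotheses enforce $\K\ge n+1$ and $\N\ge 1$ (indeed, $\N\ge 2\ceil{\tfrac{n+1}{2}}\ge n+1$ under (b)), so the ranges $0\le\kappa\le n+1$ and $0\le|\nu|\le 1$ required by \eqref{eq: zerotwozero} are covered.

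The main point to watch is simply that no auxiliary hypothesis of \cite[Theorem 1.4]{MR3280055} is silently overlooked; because we are working in the $\SC$ classes rather than the broader $\SM$ classes, there is no growth function $g$ to manage and the derivative estimates transfer verbatim to those demanded in \cite{MR3280055}, with no further reduction needed. Once (i) and (ii) are in place, a direct invocation of \cite[Theorem 1.4]{MR3280055} delivers the weighted bound $\Vert T_\sigma f\Vert_{L^p_w}\lesssim\Vert f\Vert_{L^p_w}$ for every $w\in A_p$ and $1<p<\infty$, completing the proof.
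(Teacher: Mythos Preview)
Your proposal is correct and follows the same approach as the paper, which simply states that the corollary is obtained ``by invoking \cite[Theorem 1.4]{MR3280055}'' after Corollary~\ref{cor:Lp2}. You have spelled out the two ingredients that make this invocation legitimate---the $L^2$ boundedness from Corollary~\ref{cor:Lp2} and the verification that $\SC^{0,\K,\N}_{1,0}$ with $\K\ge n+1$, $\N\ge 1$ yields the derivative bounds \eqref{eq: zerotwozero}---which is exactly what the paper leaves implicit.
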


Let us compare Corollary \ref{cor:Lp2} with existing results.
First,  assuming that $T_\sigma$ is a priori bounded on $L^2(\RR^n)$, the authors in \cite[Theorem 1.4]{MR3280055} prove $L^p(\RR^n)$ boundedness for all $1<p<\infty$ (actually $L^p_w(\RR^n)$ with $w\in A_p$) provided that $\sigma\in \SC^{0,n+1,1}_{1,0}$. 
 On the other hand, \cite[Corollary 2.12]{CR} does not assume $L^2(\RR^n)$ boundedness but assumes  $\N=0$ and $\K=2n+1$, requiring no regularity in $x$ but more regularity in $\kk$.
By contrast, say for $\dd=0$, in place of $L^2(\RR^n)$ boundedness we assume  more regularity in the $x$ variable with $\N=2\ceil{(n+1)/2}=2\floor{n/2}+2$, but retain the same level of $\kk$-regularity as \cite{MR3280055} with $\K=n+1$.
 It is worth noting that \cite[Corollary 2.12]{CR} is a consequence of \cite[Theorem 1.1]{CR}, which addresses symbols satisfying estimates  in terms of Sobolev norms (H\"ormander-type conditions). Such conditions do not imply those assumed in Corollary~\ref{cor:Lp2}.

\subsection{Results for $0<p\le 1$}\label{sec:hardy} For $0<p\le 1,$  the local Hardy spaces $h^p(\RR^n)$ (as defined in \cite{MR523600}) are better suited than $L^p(\RR^n)$ or the Hardy spaces $H^p(\RR^n)$ for  boundedness of pseudo-differential operators in the Euclidean setting;  see for instance \cite{MR523600}, \cite{MR2500920}  and \cite{MR975206}. In this context, it holds that $h^p(\RR^n)=F^{p,2}_0(\RR^n)$ for $0<p\le 1$ and $h^p(\RR^n)=L^p(\RR^n)=F^{p,2}_0(\RR^n)$ for $1<p<\infty,$ where $F^{p,2}_0(\RR^n)$ are the classical Triebel--Lizorkin spaces in Euclidean space.

With this in mind, given an admissible system $\{\vph_j\}_{j\in\NN_0},$ we make the following definition.
\begin{Definition}[Hermite local Hardy spaces] For  $0<p\le 1,$   we define  the  Hermite local Hardy space $h^p(\LL)$ as the class of tempered distributions $f\in\mathscr{S}'(\RR^n)$ such that
\begin{align*}
\Vert f\Vert_{h^p(\LL)} =\Big\Vert \Big(\sum_{j\in\NN_0}|\vph_j(\sqrt{\LL})f|^2\Big)^{1/2}\Big\Vert_{L^p}<\infty.
\end{align*}
\end{Definition}
 Note that
\begin{align}\label{eq:hardyrelation}
 F_0^{p,2}(\LL) &= h^p(\LL) , \qquad 0<p\le 1.
\end{align}
In particular, as discussed in  Section \ref{sec: spaces},  the spaces $h^p(\LL)$ are independent of the choice of $\{\vph_j\}_{j\in\NN_0}$ and are quasi-Banach spaces. These spaces coincide with those introduced in \cite{MR1631616}; see Remark \ref{rem:atomic hardy} below. 

From Theorem \ref{th: main2} and \eqref{eq:hardyrelation} we have the following result. 
\begin{Corollary}[$h^p$ boundedness]\label{cor:hardy}
Let $0<p\le 1,$ $M\in\NN_0$ and  $M\ge\floor{n(\f{1}{p}-1)}$.
Assume that $\sigma:\RR^n\times\NN_0\to\mathbb{C}$ satisfies one of the following conditions:
\begin{enumerate}[\upshape(a)]
\item $\sigma\in \SM^{0,\K,1}_{1,1}\cap \CN^{0,M}$  for some $\K\ge \floor{n/p}+1,$
\item $\sigma \in \SM^{0,\K,\N}_{1,\dd}$ for some $0\le \dd<1$, $\N\ge 2\ceil{\f{n+M+1}{2(1-\dd)}}$ and $\K \ge \floor{n/p}+1$.
\end{enumerate}
Then $T_\sigma$ extends to a bounded operator on  $h^p(\LL)$.
\end{Corollary}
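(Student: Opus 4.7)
The plan is to deduce Corollary~\ref{cor:hardy} as a direct specialization of Theorem~\ref{th: main2}, combined with the identification $h^p(\LL) = F^{p,2}_0(\LL)$ for $0 < p \le 1$ recorded in~\eqref{eq:hardyrelation}. Thus the goal is to establish boundedness of $T_\sigma$ on $F^{p,2}_0(\LL)$ and then invoke~\eqref{eq:hardyrelation}.

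First I would invoke Theorem~\ref{th: main2} with the parameter choices $A = F$, $\alpha = 0$, $m = 0$, and $q = 2$. For $0 < p \le 1$ one has $\min\{1, p, q\} = p$, so the quantity $n_{p,q}$ from Section~\ref{sec: spaces} specializes to $n/p$. The two alternative hypotheses on $\sigma$ in the corollary are designed to match exactly the alternatives (a) and (b) of Theorem~\ref{th: main2}: in case (a) this is the choice $\N = 1$ (encoded in $\SM^{0,\K,1}_{1,1}$), while in case (b) the lower bound on $\N$ is inherited verbatim.

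Next, the main task would be to verify that the parameter constraints~\eqref{eq:apq2} of Theorem~\ref{th: main2} hold at this specialization. The bound $\alpha = 0 < \N$ is automatic, since $\N \ge 1$ in both cases. The inequality $n_{p,q} - n - M - 1 < \alpha$ simplifies to $M + 1 > n(1/p - 1)$, which is exactly what the standing assumption $M \ge \floor{n(1/p - 1)}$ delivers. The inequality $\max\{n_{p,q}, n + M\} < \K$ becomes $\K > \max\{n/p, n + M\}$; in the minimal case $M = \floor{n(1/p - 1)}$ this reduces to $\K > n/p$, which follows from $\K \ge \floor{n/p} + 1$.

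Applying Theorem~\ref{th: main2} then yields that $T_\sigma$ extends to a bounded operator from $F^{p,2}_{0+m}(\LL) = F^{p,2}_0(\LL)$ into itself, and~\eqref{eq:hardyrelation} translates this into the desired boundedness on $h^p(\LL)$. Since the argument is a routine specialization, there is no genuine mathematical obstacle; the only point requiring care is the bookkeeping of $n_{p,q}$ and the alignment of the minimal cancellation, smoothness, and regularity indices $(M, \N, \K)$ between Corollary~\ref{cor:hardy} and Theorem~\ref{th: main2}.
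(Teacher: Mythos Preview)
Your proposal is correct and follows essentially the same route as the paper: specialize Theorem~\ref{th: main2} with $A=F$, $m=0$, $\alpha=0$, $q=2$, compute $n_{p,q}=n/p$, verify the constraints in~\eqref{eq:apq2}, and conclude via the identification~\eqref{eq:hardyrelation}. One small point you leave implicit: when you pass to ``the minimal case $M=\lfloor n(1/p-1)\rfloor$'' to check $\K>\max\{n/p,n+M\}$, you should note that this is legitimate because the hypotheses in (a) and (b) of the corollary for a given $M$ imply the corresponding hypotheses for any smaller $M'\le M$ (since $\CN^{0,M}\subset\CN^{0,M'}$ and the lower bound on $\N$ in (b) is monotone in $M$), so one may always apply Theorem~\ref{th: main2} with the minimal admissible $M$.
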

\begin{proof} For $0<p\le 1$ and $q=2,$ we have  $n_{p,q}=n/p.$ The conditions in \eqref{eq:apq2} are satisfied with $n_{p,q}=n/p,$   $M>n/p-n-1,$ $\alpha=0,$ $\N\in \NN$ and $\K>\max\{n/p, n+M\}.$ Thus, we can apply Theorem \ref{th: main2} with $0<p\le 1,$  $q=2,$ $\alpha=0,$  $M\ge \floor{n(\f{1}{p}-1)},$  $\K\ge\floor{n/p}+1,$ $\N\ge1$ or $\N\ge 2\ceil{\f{n+M+1}{2(1-\dd)}}$ to obtain the desired result through the use of \eqref{eq:hardyrelation}.
\end{proof}
\begin{Remark} The following consequences can be easily deduced from Corollary \ref{cor:hardy}. If $\sigma\in \SM^{0,\infty,\infty}_{1,1}\cap \CN^{0,M}$ for some $M\in\NN_0$ then $T_\sigma$ is bounded on $h^p(\LL)$ for  $\tfrac{n}{n+M+1}<p\le 1$.
On the other hand, if $\sigma\in\SM^{0,\infty,\infty}_{1,\dd}$ for some $0\le \dd<1,$ then $T_\sigma$ is bounded on $h^p(\LL)$ for every $0<p\le 1$.  
\end{Remark}
\begin{Remark}\label{rem:atomic hardy}
We next observe that Corollary \ref{cor:hardy} also yields results on the classical local Hardy spaces $h^p(\RR^n)$. Indeed, if $\sigma\in \SM^{0,\infty,\infty}_{1,1}\cap \CN^{0,M}$ for some $M\in\NN_0,$ then $T_\sigma$ maps $h^p(\RR^n)$ into $h^p(\LL)$ for  $\tfrac{n}{n+M+1}<p\le 1$, while if $\sigma\in\SM^{0,\infty,\infty}_{1,\dd}$ for some $0\le \dd<1$, then $T_\sigma$ maps $h^p(\RR^n)$ into $h^p(\LL)$ for every $0<p\le 1$. 

 In order to show this, we first  recall the atomic Hardy space  associated to the Hermite operator introduced in \cite{MR1631616}. Given $0<p\le 1,$ a function $a$ defined on $\RR^n$ is a  $p$-atom  if there exists a ball $B=B(x,r),$ where $x\in \RR^n$ and $r>0,$ such that
\begin{enumerate}[(i)]
\item $\supp a\subset B,$
\item $\Vert a\Vert_\infty\le |B|^{-1/p},$
\item if $r\le \f{1}{2}\cro(x)$ then $\displaystyle\int_{B} x^\gamma a(x)\,dx=0$ for every $|\gamma|\le \floor{n\big(\f{1}{p}-1\big)}$.
\end{enumerate}
The atomic Hardy space $h^p_\cro(\RR^n)$ associated to the Hermite operator is defined as the class of all $f\in\mathscr{S}'(\RR^n)$ such that 
$$ \Vert f\Vert_{h^p_\cro}=\inf\Big\{\big(\sum_{j\in \NN}|c_j|^p\big)^{1/p}\Big\}<\infty,$$
where the infimum is taken over all representations $f=\sum_{j\in \NN} c_j a_j$ with scalars $c_j$ and $p$-atoms $a_j$. 
Note that $h^p(\RR^n) \subset h^p_\cro(\RR^n)$ for $0<p\le 1$ since atoms in $h^p(\RR^n),$ as defined in \cite{MR523600}, satisfy conditions (i), (ii) and (iii).

It was recently proved  that the atomic Hardy space $h^p_\cro(\RR^n)$ coincides with  the Hermite Triebel--Lizorkin space $F^{p,2}_0(\LL)$ for every $0<p\le 1$ (see \cite[Theorem 9]{MR3354367} and \cite[Remarks 2.20 and 2.7]{MR4205254}). Thus, the relations
$$ h^p(\RR^n) \subset h^p_\cro(\RR^n)=h^p(\LL)$$ 
and Corollary~\ref{cor:hardy} lead to the desired results. 
\end{Remark}

\subsection{Hermite multipliers}

Let $\sigma=\sigma(\xs)$ be a symbol in $S^{m,\K,\infty}_{1,0}$  for some $m\in\RR$ and $\K\in \NN_0;$ thus, $\sigma$ satisfies 
\begin{align} \label{eq:multiplier1}
|\diff^\kappa \sigma(\xs)| \lesssim (1+\sqrt{\xs})^{m-2\kappa}\qquad \forall \xs\in \NN_0, \;0\le \kappa\le \K.
\end{align}

For the  case $m=0,$ it was shown in \cite[Theorem 4.2.1]{MR1215939} that  $T_\sigma$ is bounded on $L^p(\RR^n)$ for all $1<p<\infty$ provided $\K= \floor{n/2}+1$. In dimension $n=1$ with $m=0$ and $\K=1$, the boundedness on $F^{p,q}_\alpha(\LL)$ for $p,q>1$ and $\alpha\in\RR$ was obtained in \cite[Theorem 1]{MR1343690}.

By applying Theorem \ref{th: main2}, we obtain that if $M\in \NN_0$ and $\K> \max\{n_{p,q}, n+M\},$ then $T_\sigma$ is bounded from $A^{p,q}_{\alpha+m}(\LL)$ to  $A^{p,q}_\alpha(\LL)$ provided
$$ n_{p,q}-n-M-1<\alpha.$$
In particular, if $\K=\infty$ then the result holds for all $\alpha\in\RR$ and every $p,q>0$.

\subsection{On a result of Meyer and Bony}\label{sec: meyer}
In this section, we  present an example of a linearization process of a non-linear problem that along with Theorem~\ref{th: main1} implies that Hermite Besov spaces and Hermite Triebel--Lizorkin spaces are closed under non-linearities.

\begin{Theorem}[A linearization formula]\label{thm: appnonlin}
Let  $H\in \mathcal{C}^\infty(\RR)$ be such that $H(0)=0.$  If $f\in \sz(\RR^n)$  is real-valued,
there exists $\sigma_f\in S^{0,\infty,\infty}_{1,1}$ such that $H(f)=T_{\sigma_f}(f).$ In particular, if $\nu\in \NN_0^n$ and $\kappa\in \NN_0,$ the symbol $\sigma_f$ satisfies
\begin{equation}\label{eq: appnonlin}
|\partial_x^\nu\diff_\xs^\kappa\sigma_f(x,\xs)|\lesssim \left(\sup_{|\lambda|\lesssim \|f\|_{L^\infty}}\sum_{\ell=0}^{|\nu|}|H^{(\ell+1)}(\lambda)|\|f\|_{L^\infty}^\ell\right)\, \lambda_\xs^{|\nu|/2-\kappa} \quad \forall x\in \RR^n,  \xs\in \NN_0,
\end{equation}
where the implicit constant may depend on $\nu$ and $\kappa$ and is independent of $f.$
\end{Theorem}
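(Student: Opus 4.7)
The strategy mirrors the classical Meyer--Bony linearization of $H(f)$ carried out via a Hermite version of the Littlewood--Paley decomposition. Fix a smooth $\eta\colon \RR_+ \to \RR$ with $\eta \equiv 1$ on a neighborhood of $[0,\sqrt{n}]$ and $\supp \eta$ compact, and set $\eta_j(\lambda) = \eta(2^{-j}\lambda)$, $\chi_j = \eta_{j+1} - \eta_j$ for $j\ge 0$, and $\chi_{-1} = \eta_0$. Let $S_j = \eta_j(\sqrt{\LL})$, $\Delta_j = \chi_j(\sqrt{\LL})$, with the convention $S_{-1}=0$ so that $\Delta_{-1}=S_0$. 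For $f\in\sz(\RR^n)$ real-valued one has $S_J f\to f$ in $\sz(\RR^n)$ as $J\to\infty$ and, telescoping while invoking $H(0)=0$ and the mean value theorem,
\[
H(f)(x) \,=\, \sum_{j\ge -1}\big[H(S_{j+1}f(x)) - H(S_j f(x))\big] \,=\, \sum_{j\ge -1}m_j(x)\,\Delta_j f(x),
\]
where $m_j(x) := \int_0^1 H'(S_j f(x) + t \Delta_j f(x))\,dt$. All quantities are real because $\eta$ is real and the kernels $\PP_k(x,y)$ are real. Expanding $\Delta_j f = \sum_\xs \chi_j(\sqrt{\lambda_\xs})\,\PP_\xs f$ and rearranging (absolute convergence is easy since $f$ is Schwartz), one obtains $H(f)=T_{\sigma_f}(f)$ with
\[
\sigma_f(x,\xs) \,:=\, \sum_{j\ge -1} m_j(x)\,\chi_j(\sqrt{\lambda_\xs}),
\]
where for each fixed $\xs$ only $O(1)$ indices $j$ contribute in view of the almost disjoint dyadic supports of the $\chi_j$.

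To verify \eqref{eq: appnonlin}, one writes $\partial_x^\nu \diff_\xs^\kappa \sigma_f = \sum_j (\partial_x^\nu m_j)\,\diff_\xs^\kappa \chi_j(\sqrt{\lambda_\xs})$. The spectral factor is handled by Lemma \ref{lem: hoppe}(a), applicable to each $\chi_j$ with $j\ge 0$ since $\chi_j^{(m)}(0)=0$; it gives $|\diff_\xs^\kappa \chi_j(\sqrt{\lambda_\xs})|\lesssim \lambda_\xs^{-\kappa}$ on the band $\lambda_\xs\sim 4^j$ and is zero otherwise. The contribution from $\chi_{-1}=\eta_0$ is trivially bounded on its finite support $\lambda_\xs=O(1)$. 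The $x$-derivatives of $m_j$ are estimated by Fa\`a di Bruno applied to $\partial_x^\nu [H'(g_j)]$ with $g_j = S_j f + t\Delta_j f$: each term takes the form $H^{(\ell+1)}(g_j)\prod_{i=1}^\ell \partial^{\alpha_i} g_j$ with $\sum_i|\alpha_i|=|\nu|$ and $1\le \ell\le |\nu|$. The pointwise bound $\|g_j\|_{L^\infty}\lesssim \|f\|_{L^\infty}$, uniform in $j$ and $t$, yields $|H^{(\ell+1)}(g_j(x))|\le \sup_{|\lambda|\lesssim \|f\|_{L^\infty}}|H^{(\ell+1)}(\lambda)|$, and a Hermite Bernstein-type inequality produces $|\partial^{\alpha_i} g_j(x)|\lesssim 2^{j|\alpha_i|}\|f\|_{L^\infty}$, giving the overall factor $2^{j|\nu|}\|f\|_{L^\infty}^\ell$. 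Combining these bounds with $2^{j|\nu|}\sim \lambda_\xs^{|\nu|/2}$ on $\supp \chi_j$ yields \eqref{eq: appnonlin}; the $\ell=0$ contribution in the sum covers the base case $|\nu|=0$, for which $|m_j(x)|\le \sup_{|\lambda|\lesssim \|f\|_{L^\infty}}|H'(\lambda)|$.

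The main obstacle is supplying the Hermite Bernstein inequality $\|\partial^\alpha S_j f\|_{L^\infty}+\|\partial^\alpha \Delta_j f\|_{L^\infty}\lesssim 2^{j|\alpha|}\|f\|_{L^\infty}$ together with the uniform bound $\|S_j f\|_{L^\infty}+\|\Delta_j f\|_{L^\infty}\lesssim \|f\|_{L^\infty}$. Both follow from kernel estimates of the type in Lemma \ref{lem: phiest}, adapted to the smooth cutoffs $\eta_j$ and $\chi_j$, combined with the decomposition $\partial^\alpha = \sum_{\alpha'+\beta\le \alpha} C_{\alpha',\beta} A^{\alpha'} x^\beta$ from \cite[(6.15)]{MR2399106}: the spectral localization of $S_j,\Delta_j$ to $\sqrt{\LL}\lesssim 2^j$ converts each creation operator $A_i$ acting on a band-limited function into a factor of $2^j$, while the decay of the kernels in both arguments tames the multiplications by $x_i$. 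The remaining combinatorial bookkeeping in Fa\`a di Bruno produces only multiplicative constants depending on $\nu$ and $\kappa$, which is harmless for membership in $S^{0,\infty,\infty}_{1,1}$.
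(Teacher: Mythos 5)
Your proof is correct and takes essentially the same route as the paper: telescope $H(f)$ against a Hermite Littlewood--Paley decomposition, apply the mean value theorem to write each increment as $m_j\cdot\Delta_j f$ with $m_j$ an averaged $H'$, estimate the spectral factor by Lemma~\ref{lem: hoppe}(a), and estimate $\partial^\nu m_j$ by Fa\`a di Bruno together with the Bernstein-type bound $\|\partial^\gamma S_j f\|_{L^\infty}+\|\partial^\gamma\Delta_j f\|_{L^\infty}\lesssim 2^{j|\gamma|}\|f\|_{L^\infty}$ coming from the kernel estimate \eqref{phiest A} (and Remark~\ref{re: phiest}). The only cosmetic difference is that the paper works directly with an admissible pair $(\varphi_0,\varphi)$ for which $\varphi_0^{(m)}(0)=0$ is built into Definition~\ref{def: admissible}, whereas you build the dyadic blocks $\chi_j$ from a general cutoff $\eta$ and hence treat the lowest block $\chi_{-1}=\eta_0$ separately.
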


\noindent As a consequence of Theorem~\ref{thm: appnonlin} and Theorem~\ref{th: main1} we obtain the following result. 

\begin{Corollary}[Closure under non-linearities] \label{coro: appnonlin} Assume  $0<p<\infty,$ $0<q<\infty,$    $\alpha>n_{p,q}-n$ and  $H\in \mathcal{C}^\infty(\RR)$ is such that $H(0)=0.$ If $f\in A^{p,q}_\alpha(\LL)\cap L^\infty(\RR^n)$ is real-valued, then $H(f)\in A^{p,q}_\alpha(\LL)\cap L^\infty(\RR^n).$
\end{Corollary}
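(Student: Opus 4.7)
The proof combines Theorem~\ref{thm: appnonlin} (linearization of $H(f)$ as a Hermite pseudo-multiplier acting on $f$) with Theorem~\ref{th: main1} (boundedness of pseudo-multipliers for $\alpha>n_{p,q}-n$, requiring no cancellation of molecules), followed by an approximation argument to pass from Schwartz data to arbitrary $f\in A^{p,q}_\alpha(\LL)\cap L^\infty(\RR^n)$.

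\emph{Step 1 (Schwartz case).} Suppose first that $f\in\sz(\RR^n)$ is real-valued. Theorem~\ref{thm: appnonlin} produces $\sigma_f\in S^{0,\infty,\infty}_{1,1}$ with $H(f)=T_{\sigma_f}f$. Taking $g\equiv 1$ shows $S^{0,\infty,\infty}_{1,1}\subset\SM^{0,\K,\N}_{1,1}$ for every $\K,\N\in\NN$, and the bound \eqref{eq: appnonlin} guarantees that the implicit constants in \eqref{eq:sm} for $\sigma_f$ depend only on a continuous function of $\|f\|_{L^\infty}$. Choosing $\N,\K$ with $\alpha<\N$ and $n_{p,q}<\K$, Theorem~\ref{th: main1} (with $m=0$) then yields
\[
\|H(f)\|_{A^{p,q}_\alpha}=\|T_{\sigma_f}f\|_{A^{p,q}_\alpha}\lesssim C(\|f\|_{L^\infty})\,\|f\|_{A^{p,q}_\alpha},
\]
while $\|H(f)\|_{L^\infty}\le\sup_{|t|\le\|f\|_{L^\infty}}|H(t)|<\infty$ because $H$ is continuous and $H(0)=0$.

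\emph{Step 2 (Approximation).} For a general real-valued $f\in A^{p,q}_\alpha(\LL)\cap L^\infty(\RR^n)$, I would construct a sequence $\{f_k\}\subset\sz(\RR^n)$ of real-valued approximants satisfying
\begin{enumerate}
\item $f_k\to f$ in $A^{p,q}_\alpha(\LL)$;
\item $\|f_k\|_{L^\infty}\le C\|f\|_{L^\infty}$ uniformly in $k$;
\item $f_k\to f$ almost everywhere.
\end{enumerate}
Condition (i) is available because $\sz(\RR^n)$ is dense in $A^{p,q}_\alpha(\LL)$ for $p,q<\infty$, as noted after the definition of those spaces. To also obtain (ii)–(iii), I would combine the Mehler heat-semigroup regularization $e^{-t_k\LL}f$, which is $L^\infty$-contractive by positivity and normalization of the Mehler kernel, with standard smooth truncation and mollification to force the approximants to be Schwartz, adjusting parameters so that the $A^{p,q}_\alpha$-convergence in (i) is preserved.

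\emph{Step 3 (Passage to the limit).} Applying Step 1 to each $f_k$ gives
\[
\sup_k\|H(f_k)\|_{A^{p,q}_\alpha}\lesssim C(C\|f\|_{L^\infty})\sup_k\|f_k\|_{A^{p,q}_\alpha}<\infty,
\]
and $H(f_k)\to H(f)$ a.e.\ by continuity of $H$ together with (iii). A Fatou-type lower-semicontinuity argument in the frame-coefficient picture---using the boundedness of the analysis operator $S_\varphi:A^{p,q}_\alpha(\LL)\to a^{p,q}_\alpha(\LL)$ from Theorem~\ref{th: frame}(b) and Fatou's lemma applied inside the sequence space $a^{p,q}_\alpha(\LL)$---then forces $H(f)\in A^{p,q}_\alpha(\LL)$. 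Membership in $L^\infty(\RR^n)$ is immediate from $|H(f)|\le\sup_{|t|\le\|f\|_{L^\infty}}|H(t)|$ a.e.

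\emph{Main obstacle.} The principal technical difficulty is Step~2: producing Schwartz approximants that simultaneously converge in $A^{p,q}_\alpha(\LL)$ and remain uniformly bounded in $L^\infty$. A naive truncation of the frame expansion $f=\sum_R s_R\psi_R$ yields (i) and Schwartzness, but no a priori control on the $L^\infty$ norm of the partial sums. The heat-semigroup route sketched above provides the $L^\infty$ control but requires verifying that $e^{-t_k\LL}f\to f$ in $A^{p,q}_\alpha(\LL)$ and that the subsequent truncation-mollification needed to reach $\sz(\RR^n)$ does not destroy this convergence. The Fatou/lower-semicontinuity step is also mildly delicate in the quasi-Banach regime $p<1$ or $q<1$, where duality is unavailable; this is why the frame representation, rather than a dual pairing, is the natural tool for extracting the limit.
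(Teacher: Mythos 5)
Your overall strategy (linearize via Theorem~\ref{thm: appnonlin}, apply Theorem~\ref{th: main1}, then pass to the limit) matches the paper's. Step 1 is correct, and the Fatou-type passage to the limit in Step 3 is in the right spirit, though you should note that closing the a.e./frame-coefficient argument requires also knowing that the full sequence $H(f_k)$ converges to $H(f)$ in $\sz'(\RR^n)$ so that the reconstruction $T_\psi S_\varphi$ identifies the limit; the paper handles this by first showing $H(f_j)\to H(f)$ in some $L^r(\RR^n)$ via the embedding of Corollary~\ref{coro: emb} and then invoking the Fatou property of $A^{p,q}_\alpha(\LL)$ directly on the distribution side.

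The genuine gap is in Step 2. You correctly identify the need for Schwartz approximants $f_k$ with $f_k\to f$ in $A^{p,q}_\alpha(\LL)$ and $\sup_k\|f_k\|_{L^\infty}\lesssim\|f\|_{L^\infty}$, but you do not produce such a sequence: the heat-semigroup-plus-mollification route is left unverified (and you flag it yourself as the ``principal technical difficulty''). Moreover, your remark that ``a naive truncation\ldots yields no a priori control on the $L^\infty$ norm of the partial sums'' points you in the wrong direction, because the paper's choice of approximant is precisely a truncation --- but of the Littlewood--Paley decomposition, not the frame expansion. Specifically, take $f_j=\sum_{\ell=0}^j\varphi_\ell(\sqrt{\LL})f={\varphi_0}_j(\sqrt{\LL})f$ where ${\varphi_0}_j(\lambda)=\varphi_0(2^{-j}\lambda)$. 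Each $f_j$ is a finite linear combination of Hermite functions (since $\varphi_\ell(\sqrt{\LL})f$ involves only $\PP_\kappa f$ for $\kappa\in I_\ell$), hence Schwartz; $f_j\to f$ in $A^{p,q}_\alpha(\LL)$; and, crucially, the kernel estimate \eqref{phiest A} applied to ${\varphi_0}_j(\sqrt{\LL})$ (see Remark~\ref{re: phiest}) gives a uniform $L^1$ bound on the kernel $y\mapsto {\varphi_0}_j(\sqrt{\LL})(x,y)$, whence $\|f_j\|_{L^\infty}\lesssim\|f\|_{L^\infty}$ uniformly in $j$ --- this is exactly estimate \eqref{eq: claim} with $\gamma=0$, which the paper already sets up inside the proof of Theorem~\ref{thm: appnonlin}. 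Using this sequence resolves both difficulties you raise and makes Step 2 a one-line observation rather than an open problem; without it, your proof as written is incomplete.
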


We next proceed with the proofs of the stated results.

\begin{proof}[Proof of Theorem~\ref{thm: appnonlin}] Let $f\in \sz(\RR^n)$ be real-valued.
Assume that $(\varphi_0,\varphi)$ is an admissible pair such that $\sum_{j=0}^\infty \varphi_j(\lambda)= 1$ for all $\lambda\ge 0$ and $\varphi_0^{(\ell)}(0)=0$ for all $\ell\in \NN.$   
We have
\begin{equation*}
\sum_{j\in\NN_0}\varphi_j(\sqrt{\LL})f=f,
\end{equation*}
where the series converges absolutely and uniformly in $\RR^n$  since $f\in \sz(\RR^n).$  
Define $f_j=\sum_{\ell=0}^j \varphi_\ell(\sqrt{\LL})f$ for $j\in \NN_0$ and $f_{-1}=0.$ Since $f_j\to f$ uniformly on $\RR^n$, $H$ is continuous and $H(0)=0,$ it follows that
\begin{equation*}
H(f)=\lim_{j\to\infty} H(f_j)=\sum_{j\in\NN_0} H(f_j)-H(f_{j-1})
\end{equation*}
pointwise in $\RR^n$ (even more, through the Mean Value Theorem, it follows that the convergence is uniform  using that $\sup_{j\in \NN_0} \|f_j\|_{L^\infty}<\infty,$ as shown below, and that $H'$ is continuous).
The Mean Value Theorem gives that
\begin{align*}
H(f_j)-H(f_{j-1})&=\int_0^1H'(t f_j+(1-t)f_{j-1})\,dt\, (f_j-f_{j-1})\\
& =\int_0^1H'(f_{j-1}+t\varphi_j(\sqrt{\LL})f)\,dt\, \varphi_j(\sqrt{\LL})f.
\end{align*}
Setting $m_j=\int_0^1H'(f_{j-1}+t\varphi_j(\sqrt{\LL})f)\,dt,$ we then have 
\begin{align*}
H(f)(x)=\sum_{j\in\NN_0} m_j(x) \varphi_j(\sqrt{\LL})f(x)
=\sum_{\kk\in\NN_0}  \left(\sum_{j\in\NN_0} m_j(x) \varphi_j(\sqrt{\lambda_\kk})\right)\PP_\kk(f)(x),
\end{align*}
which means that $H(f)$ can be realized as the action on $f$ of the pseudo-multiplier with symbol 
\begin{equation*}
\sigma_f(x,\kk)=\sum_{j\in\NN_0} m_j(x) \varphi_j(\sqrt{\lambda_\kk}).
\end{equation*}

We next prove that $\sigma_f\in S^{0,\infty,\infty}_{1,1}$ by showing \eqref{eq: appnonlin}. We have 
\begin{align*}
\partial_x^\nu\diff_\kk^\kappa\sigma_f(x,\kk)= \sum_{j\in\NN_0} \partial^\nu m_j(x) \diff_\kk^\kappa(\varphi_j(\sqrt{\lambda_\kk}))=\sum_{j=\floor{\f{1}{2}(\log_2\lambda_\kk -1)}}^{\ceil{\f{1}{2}(\log_2(\lambda_\kk +2\kappa)+1)}} \partial^\nu m_j(x) \diff_\kk^\kappa(\varphi_j(\sqrt{\lambda_\kk})).
\end{align*}
Note that the number of terms in the  last sum is bounded by a number independent of $\kk$ and dependent on $\kappa.$
 It is then  enough to show that
\begin{equation}\label{eq: nl1}
 |\partial^\nu m_j(x)\diff_\kk^\kappa(\varphi_j(\sqrt{\lambda_\kk}))| \lesssim \left(\sup_{|\lambda|\lesssim \|f\|_{L^\infty}}\sum_{\ell=0}^{|\nu|}|H^{(\ell+1)}(\lambda)|\|f\|_{L^\infty}^\ell\right)\, \lambda_\kk^{|\nu|/2-\kappa} \quad \forall x\in \RR^n, \kk\in \NN_0,
 \end{equation}
  for  $j=\floor{\f{1}{2}(\log_2\lambda_\kk -1)},\dots,\ceil{\f{1}{2}(\log_2(\lambda_\kk +2\kappa)+1)}$ and where the implicit constant may depend on $\nu$ and $\kappa$ and is independent of $f.$
  
Using Lemma~\ref{lem: hoppe} and taking $N$ so that $N>\kappa,$ it holds that 
\begin{equation}\label{eq: nl2}
|\diff_\kk^\kappa(\varphi_j(\sqrt{\lambda_\kk}))|\lesssim \lambda_\kk^{N/2-\kappa} 2^{-jN}\lesssim \lambda_\kk^{-\kappa},
\end{equation}
where  it was used that  $\sqrt{\lambda_\kk}\sim 2^j$  for $j=\floor{\f{1}{2}(\log_2\lambda_\kk-1)},\dots,\ceil{\f{1}{2}(\log_2(\lambda_\kk +2\kappa)+1)}.$

We next observe that if $\gamma\in \NN_0^n$ then
\begin{equation}\label{eq: claim}
\max\{\|\partial^{\gamma}f_{j}\|_{L^\infty},\|\partial^{\gamma}\varphi_j(\sqrt{\LL})f\|_{L^\infty}\}\lesssim 2^{j|\gamma|}\|f\|_{L^\infty}.
\end{equation}   
Indeed, the estimate for $\|\partial^{\gamma}\varphi_j(\sqrt{\LL})f\|_{L^\infty}$ follows from  \eqref{phiest A}. Regarding $\|\partial^{\gamma}f_{j}\|_{L^\infty},$ 
note that $f_j={\varphi_0}_j(\sqrt{\LL})f$ with ${\varphi_0}_j(\lambda)=\varphi_0(2^{-j}\lambda)$ since $\sum_{l\in\NN_0} \varphi_l\equiv 1$ gives that $\varphi_0(2^{-j}\lambda)=\sum_{l=0}^j\varphi_l(\lambda)$ for $\lambda\ge 0.$ This and the fact that $\varphi_0^{(l)}(0)=0$ for $l\in \NN$ imply that  the estimate for $\|\partial^{\gamma}f_{j}\|_{L^\infty}$ is also a consequence of  \eqref{phiest A} (see Remark~\ref{re: phiest}).

For $\nu=0,$ \eqref{eq: claim} gives
\begin{equation}\label{eq: nl4}
|m_j(x)|\le \sup_{|\lambda|\lesssim\|f\|_{L^\infty} }|H'(\lambda)|.
\end{equation}
To estimate $\partial^{\nu}m_j$ for $\nu\neq 0$ we will use  Fa\`a di Bruno's formula for the partial derivatives of a composition $G(g)$, where $g : \RR^n \to \RR$ and $G : \RR \to \RR$ are smooth functions. Namely, for  $\nu \in \NN_0^n$, it holds that
\begin{equation*}
\partial^\nu G(g) = \sum\limits_{\nu = \nu_1 + \dots  + \nu_\ell} G^{(\ell)}(g) \prod_{r=1}^\ell \partial^{\nu_r} g,
\end{equation*}
where the sum is over all the multi-index decompositions $\nu = \nu_1 + \dots + \nu_\ell$ with $\ell \geq 1,$  $\nu_r \in \NN_0^n$ and $\nu_r\neq 0$ for $r=1,\ldots, \ell;$ notice that $\ell \leq |\nu|$.   
We then have
\begin{align*}
\partial^\nu m_j&=\int_0^1 \partial^\nu(H'(f_{j-1}+ t \varphi_j(\sqrt{\LL})f)) \,dt\\
&= \int_0^1 \sum\limits_{\nu = \nu_1 + \dots  + \nu_\ell} H^{(\ell +1)}(f_{j-1}+ t \varphi_j(\sqrt{\LL})f) \prod_{r=1}^\ell \partial^{\nu_r} (f_{j-1}+ t \varphi_j(\sqrt{\LL})f) \,dt.
\end{align*}
By \eqref{eq: claim} and since  $\sqrt{\lambda_\kk}\sim 2^j$ for $j=\floor{\f{1}{2}(\log_2\lambda_\kk -1)},\dots,\ceil{\f{1}{2}(\log_2(\lambda_\kk +2\kappa)+1)},$ we obtain
\begin{equation*}
\left|\prod_{r=1}^\ell \partial^{\nu_r} (f_{j-1}+ t \varphi_j(\sqrt{\LL})f)(x)\right|\lesssim \prod_{r=1}^\ell 2^{j|\nu_r|}\|f\|_{L^\infty}=2^{j|\nu|}\|f\|_{L^\infty}^\ell\lesssim \lambda_\kk^{|\nu|/2}\|f\|_{L^\infty}^\ell.
\end{equation*}
This implies that
\begin{equation}\label{eq: nl3}
|\partial^\nu m_j(x)|\lesssim \lambda_\kk^{|\nu|/2}\sup_{|\lambda|\lesssim \|f\|_{L^\infty}}\sum_{\ell=1}^{|\nu|}|H^{(\ell+1)}(\lambda)|\|f\|_{L^\infty}^\ell\quad \forall x\in\RR^n.
\end{equation}

The desired estimate \eqref{eq: nl1}  then follows from \eqref{eq: nl2}, \eqref{eq: nl4} and \eqref{eq: nl3}.
\end{proof}

We note that \eqref{eq: claim} is true for functions in $L^\infty(\RR^n),$ not just  in $\sz(\RR^n).$ Also, if $f\in A^{p,q}_\alpha(\LL)$ and $\{f_j\}_{j\in \NN}$ is  as in the proof of Theorem~\ref{thm: appnonlin} then  $f_j\in \sz(\RR^n),$ since it is a finite linear combination of Hermite functions, and  $f_j\to f$ in $A^{p,q}_\alpha(\LL);$ the latter  can be proved using the same ideas as in \cite[Section 2.3.3]{MR3024598}. These facts will be used in the proof of Corollary~\ref{coro: appnonlin}.

\begin{proof}[Proof of Corollary~\ref{coro: appnonlin}]  If $f\in \sz(\RR^n),$  then $H(f)\in A^{p,q}_\alpha(\LL)$ as a consequence of Theorem~\ref{thm: appnonlin} and Theorem~\ref{th: main1}. Consider $f\in  A^{p,q}_\alpha(\LL)\cap L^\infty(\RR^n)$ and note that $H(f)\in L^\infty(\RR^n)$ since $f$ is bounded and $H$ is continuous.

 Let $\{f_j\}_{j\in \NN}$ be  as in the proof of Theorem~\ref{thm: appnonlin}; then $f_j\in \sz(\RR^n),$  $f_j\to f$ in $A^{p,q}_\alpha(\LL)$ and, by \eqref{eq: claim}, $\sup_{j\in \NN}\|f_j\|_{L^\infty}\le\|f\|_\infty.$
  We have
\begin{equation*}
|H(f)(x)-H(f_j)(x)|=|H'(c_{j,x})||f_j(x)-f(x)|\lesssim \sup_{|\lambda|\le \|f\|_{L^\infty}}H'(\lambda) \,|f_j(x)-f(x)|,
\end{equation*} 
where  $c_{j,x}$ is a convex linear combination of $f_j(x)$ and $f(x)$ and the supremum is finite since $H'$ is continuous.
This estimate and the fact that  $f_j\to f$ in $L^r(\RR^n)$ for some  $1< r<\infty$  (see Corollary~\ref{coro: emb} in Appendix~\ref{app: embeddings}) imply that $H(f)\to H(f_j)$ in $L^r(\RR^n)$ and, in particular, in $\sz'(\RR^n).$ Let $\sigma_{f_j}$ be as given in Theorem~\ref{thm: appnonlin}; applying Theorem~\ref{th:  main1}  and taking into account \eqref{eq: appnonlin}, we obtain
\begin{align*}
\|H(f_j)\|_{A^{p,q}_\alpha}=\|T_{\sigma_{f_j}}(f_j)\|_{A^{p,q}_\alpha}\lesssim \sup_{|\lambda|\lesssim \|f_j\|_{L^\infty}}\sum_{\ell=0}^{\ceil{\alpha}}|H^{(\ell+1)}(\lambda)|\|f_j\|_{L^\infty}^\ell \|f_j\|_{A^{p,q}_\alpha}.
\end{align*}
(See conditions in Theorem~\ref{th:  main1}, and Appendix~\ref{app: opnorm}.)
Since $\sup_{j\in \NN}\|f_j\|_{L^\infty}\le \|f\|_{L^\infty}$ and $\|f_j\|_{A^{p,q}_\alpha}\to \|f\|_{A^{p,q}_\alpha},$ we conclude that 
\begin{equation*}
\liminf_{j\to\infty} \|H(f_j)\|_{A^{p,q}_\alpha}\lesssim  \sup_{|\lambda|\lesssim\|f\|_{L^\infty}}\sum_{\ell=0}^{\ceil{\alpha}}|H^{(\ell+1)}(\lambda)|\|f\|_{L^\infty}^\ell \,\,\|f\|_{A^{p,q}_\alpha}. 
\end{equation*}

By the Fatou property of $A^{p,q}_\alpha(\LL)$ (see Appendix~\ref{app: embeddings}), the above implies that $H(f)\in A^{p,q}_\alpha(\LL)$ and 
\begin{equation*}
 \|H(f)\|_{A^{p,q}_\alpha}\lesssim  \sup_{|\lambda|\lesssim \|f\|_{L^\infty}}\sum_{\ell=0}^{\ceil{\alpha}}|H^{(\ell+1)}(\lambda)|\|f\|_{L^\infty}^\ell\,\,\|f\|_{A^{p,q}_\alpha}. \qedhere
\end{equation*}

 \end{proof}
 \begin{Remark}If $0<p<\infty,$ $0<q<\infty,$   $\alpha>n_{p,q}-n$ and $f,g\in A^{p,q}_\alpha(\LL)\cap L^\infty(\RR^n),$
 then $fg\in A^{p,q}_\alpha(\LL)\cap L^\infty(\RR^n).$ Indeed, assuming without loss of generality that $f$ and $g$ are real-valued and using Corollary~\ref{coro: appnonlin} with $H(x)=x^2,$  it follows that $f^2,(f+g)^2,g^2\in A^{p,q}_\alpha(\LL)\cap L^\infty(\RR^n);$ since $2fg=(f+g)^2-f^2-g^2,$ we conclude that $fg\in A^{p,q}_\alpha(\LL)\cap L^\infty(\RR^n).$  
\end{Remark}

\appendix
\section{Estimates for $\vp_j(\sqrt{\LL})$}

In this appendix, we state and prove Lemma~\ref{lem: phiest}, which is used in the proofs of Lemmas~\ref{lem: needlets} and \ref{lem: AO}, and Theorem~\ref{thm: appnonlin}.

\begin{Lemma}\label{lem: phiest}
Let $\{\vp_j\}_{j\in \NN_0}$ be an admissible system. If $\eta\ge 1$, $\ve\ge 4,$ $\gamma\in\NN_0^n$ and $K\ge 0,$ it holds that 
\begin{align}\label{phiest A}
|\partial_y^\gamma \vp_j(\sqrt{\LL}) (x,y)| +|\partial_x^\gamma \vp_j(\sqrt{\LL}) (x,y)| \lesssim \f{2^{j(n+|\gamma|)}}{(1+2^j|x-y|)^\eta} \e_{\ve 4^j}(x) e_{\ve 4^j}(y)\quad \forall x,y\in \RR^n, j\in \NN_0,
\end{align}
 and, for $|\gamma|\le K,$
\begin{align}\label{phiest B}
\Big|\int_{\RR^n}(x-y)^\gamma\vp_j(\sqrt{\LL})(x,y)\,dy\Big| \lesssim 2^{-j|\gamma|}\Big(\f{1+|x|}{2^j}\Big)^{K-|\gamma|} \e_{\ve 4^j}(x)\quad \forall x\in\RR^n,j\in \NN_0.
\end{align}
\end{Lemma}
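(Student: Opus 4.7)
Both estimates start from the spectral representation
$$\vp_j(\sqrt{\LL})(x,y)=\sum_{\kk\in I_j}\vp_j(\sqrt{\lambda_\kk})\PP_\kk(x,y),\qquad \PP_\kk(x,y)=\sum_{|\xi|=\kk}h_\xi(x)h_\xi(y),$$
which is a finite sum for each fixed $j$. The plan is to reduce every claim to two universal ingredients: the finite difference bounds from Lemma~\ref{lem: hoppe}, which control $|\diff_\kk^\ell\vp_j(\sqrt{\lambda_\kk})|\lesssim 2^{-jN}\lambda_\kk^{N/2-\ell}$ for any $N$ (crucially using $\vp_0^{(m)}(0)=0$ for every $m\in\NN$ when $j=0$), and the Christoffel-type bound \eqref{QQ est}, which converts sums of $h_\xi(x)^2$ into $2^{jn}\e_{\ve 4^j}(x)^2$.

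For \eqref{phiest A}, by symmetry of the kernel it is enough to treat $\partial_y^\gamma$. Write $\partial_y^\gamma=\sum_{\alpha+\beta\le\gamma}c_{\alpha,\beta}A_y^\alpha y^\beta$ as in \cite[(6.15)]{MR2399106}; then $A_y^\alpha y^\beta$ acts on each $h_\xi(y)$ via the standard raising/lowering recurrences, producing a finite linear combination of $h_{\xi+\eta}(y)$, with $|\eta|_\infty\le|\alpha|+|\beta|$ and coefficients of size $\lambda_\kk^{(|\alpha|+|\beta|)/2}\sim 2^{j(|\alpha|+|\beta|)}$. To extract the factor $(1+2^j|x-y|)^{-\eta}$, one multiplies the kernel by $(x_i-y_i)^N$ and applies the identity \eqref{eq:identity A} (the same mechanism used in the proof of Theorem~\ref{th: Tsmooth}), which converts $(x_i-y_i)^N\PP_\kk$ into a combination of $(\A{x}_i-\A{y}_i)^{2\ell-N}\PP_\kk$ with coefficients involving $\diff_\kk^\ell\vp_j(\sqrt{\lambda_\kk})$ after summation by parts. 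Combining with Lemma~\ref{lem: hoppe}, Cauchy--Schwarz in $\xi$, and \eqref{QQ est} produces the factor $2^{jn}\e_{\ve 4^j}(x)\e_{\ve 4^j}(y)$ together with arbitrary polynomial decay in $2^j|x-y|$; taking the product over $i=1,\dots,n$ and choosing $N$ large gives \eqref{phiest A}.

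For \eqref{phiest B}, expand $(x-y)^\gamma=\sum_{\beta\le\gamma}\binom{\gamma}{\beta}(-1)^{|\beta|}x^{\gamma-\beta}y^\beta$, so the integral becomes a finite sum of $x^{\gamma-\beta}\vp_j(\sqrt{\LL})[y^\beta](x)$. Using the recurrence $y_ih_\xi=\sqrt{\xi_i/2}\,h_{\xi-e_i}+\sqrt{(\xi_i+1)/2}\,h_{\xi+e_i}$, the moments $\int y^\beta h_\xi(y)\,dy$ reduce to finite sums of $\int h_\eta(y)\,dy$, which via $\hat h_\eta=(-i)^{|\eta|}h_\eta$ equal unimodular constants times $h_\eta(0)$ (and vanish unless all $\eta_i$ are even). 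The net result is an expression of the form $\sum_{\kk\in I_j}\vp_j(\sqrt{\lambda_\kk})\cdot(\text{polynomial in }\sqrt{\lambda_\kk}\text{ of degree }|\beta|)\cdot(\text{combination of }h_\xi(x))$. One then performs summation by parts to shift $K$ differences onto $\vp_j(\sqrt{\lambda_\kk})$, invoking Lemma~\ref{lem: hoppe} to gain $2^{-jK}\lambda_\kk^{K/2}\sim 2^0$ in the worst case but $2^{-j(K-|\beta|)}$ after canceling against the $\lambda_\kk^{|\beta|/2}\sim 2^{j|\beta|}$ factor coming from moments; applying Cauchy--Schwarz and \eqref{QQ est} provides the Gaussian factor $\e_{\ve 4^j}(x)$, and the remaining $x^{\gamma-\beta}$ combines with the available powers of $2^j$ to yield the claimed bound $2^{-j|\gamma|}(\frac{1+|x|}{2^j})^{K-|\gamma|}\e_{\ve 4^j}(x)$.

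\paragraph{Main obstacle.} The principal technical difficulty is the bookkeeping in \eqref{phiest B}: one has to expand $y^\beta h_\xi$ via the Hermite recurrences, collect enough summation-by-parts to convert powers of $\lambda_\kk$ into differences $\diff_\kk^\ell\vp_j(\sqrt{\lambda_\kk})$, and check that the polynomial growth $x^{\gamma-\beta}$ balances correctly against $2^j$ factors to yield the exact exponent $K-|\gamma|$ in the polynomial term. The low-frequency case $j=0$ is also subtle: there the support of $\vp_0$ includes the origin, so the gain $2^{-jN}$ is vacuous and one instead exploits $\vp_0^{(m)}(0)=0$ for all $m\in\NN$ through part (a) of Lemma~\ref{lem: hoppe}.
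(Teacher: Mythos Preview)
Your treatment of \eqref{phiest A} is correct and is exactly what the paper does: it refers back to the mechanism of Theorem~\ref{th: Tsmooth}, namely the identity \eqref{eq:identity A}, the representation $\partial^\gamma=\sum c_{\alpha,\beta}A^\alpha x^\beta$, Lemma~\ref{lem: hoppe}, Cauchy--Schwarz, and \eqref{QQ est}.

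For \eqref{phiest B} your route is genuinely different from the paper's, and it has a gap. You write that after expanding $(x-y)^\gamma$ and computing the Hermite moments one obtains ``$\sum_{\kk\in I_j}\vp_j(\sqrt{\lambda_\kk})\cdot(\text{polynomial in }\sqrt{\lambda_\kk})\cdot(\text{combination of }h_\xi(x))$'', and then shift differences onto $\vp_j$ via Lemma~\ref{lem: hoppe}. But the moment computation you invoke gives $\int y^\beta h_\xi(y)\,dy=(2\pi)^{n/2}i^{|\beta|}(-i)^{|\xi|}\partial^\beta h_\xi(0)$, so the sum over $|\xi|=\kk$ carries the oscillatory factor $(-i)^{\kk}$ together with the evaluations $\partial^\beta h_\xi(0)$; neither is a polynomial in $\sqrt{\lambda_\kk}$. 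The finite differences of $(-i)^\kk\vp_j(\sqrt{\lambda_\kk})$ are $O(1)$ (the Leibniz term with all differences on $(-i)^\kk$ survives), so Lemma~\ref{lem: hoppe} yields no decay. One could instead try Abel summation to exploit the oscillation of $(-i)^\kk$, but that transfers the burden to $\kk$-differences of $\partial_y^\beta\PP_\kk(x,0)$, which you do not estimate and which are not available from the paper's toolbox. In short, the ``summation-by-parts gain of $2^{-j(K-|\beta|)}$'' is not justified.

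The paper's argument for \eqref{phiest B} avoids all of this. It localizes with a cutoff $\chi$ supported in $B(x,2\cro(x))$: on the complement of $B(x,\cro(x))$ one simply applies the already-proved \eqref{phiest A} with $\eta>n+K$; on the support of $\chi$ one uses that $h_\xi$ is an eigenfunction to write
\[
\int(y-x)^\gamma\chi(y)h_\xi(y)\,dy=\lambda_{|\xi|}^{-N}\int\LL_y^N\big[(y-x)^\gamma\chi(y)\big]h_\xi(y)\,dy,
\]
which produces the decay $\lambda_\kk^{-N}\sim 4^{-jN}$ directly, with no finite differences at all. The compact support of $\chi$ bounds the $|y|^2$ arising from $\LL^N$ by $(1+|x|)^2$, and this is precisely what generates the factor $\big(\tfrac{1+|x|}{2^j}\big)^{K-|\gamma|}$. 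Cauchy--Schwarz and \eqref{QQ est} then close the estimate. I recommend you adopt this localization-plus-$\LL^N$ scheme for part \eqref{phiest B}.
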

\begin{Remark}
Note that by the symmetry of the kernels $\vp_j(\sqrt{\LL})(x,y)$, \eqref{phiest B} also holds with $dx$ in place of $dy$ on the left hand side, and  $y$ in place of $x$ on the right hand side. 
\end{Remark}
\begin{Remark}\label{re: phiest} Lemma~\ref{lem: phiest} holds true, with the same proof, for a family $\{\varphi_j\}_{j\in\NN_0}$ where $\varphi_j(\lambda)=\varphi(2^{-j}\lambda)$ and $\varphi$ is a smooth function supported in $[0,c]$ for some $c>0$ that satisfies $\varphi^{(k)}(0)=0$ for all $k\in \NN.$
\end{Remark}

\begin{proof}[Proof of Lemma \ref{lem: phiest}]
Regarding \eqref{phiest A}, recall that
\begin{equation*}
\varphi_j(\sqrt{\LL})(x,y)=\sum_{\kk\in \NN_0}\varphi_j(\sqrt{\lambda_\kk})\PP_\kk(x,y).
\end{equation*}
Then \eqref{phiest A} can be proved employing the same ideas in the proof of Theorem~\ref{th: Tsmooth} through the use of Lemmas~\ref{lem: identities} and \ref{lem: leibniz} presented in Appendix~\ref{app:identities}.

We turn to the proof of \eqref{phiest B}. Assume $|\gamma|\le K$ and fix $x\in\RR^n$ and $j\in \NN_0.$ Set $B=B(x,\cro(x))$ where the function $\cro(\cdot)$ is defined in \eqref{eq:rho}. 
Let $\chi$ be a function in $C^\infty(\RR^n)$ supported in $2B$ that satisfies $\chi=1$ on $B,$  $0\le\chi\le 1$ and
\begin{align*}
 \Vert \chi^{(\nu)}\Vert_\infty \le \f{C}{\cro(x)^{|\nu|}}\qquad \forall\nu\in\NN^n_0.
 \end{align*}
 We split the integral into two terms:
 \begin{align*}
\int_{\RR^n}(x-y)^\gamma\vp_j(\sqrt{\LL})(x,y)\,dy
 &= \int_{\RR^n}(1-\chi(y))(x-y)^\gamma\vp_j(\sqrt{\LL})(x,y)\,dy \\
 &\qquad+\int_{\RR^n} \chi(y)(x-y)^\gamma\vp_j(\sqrt{\LL})(x,y)\,dy \\
 &=:I + II.
 \end{align*}
 
To estimate $I$ we use the bounds from \eqref{phiest A} with $\eta>n+K$ and recall that $|\gamma|\le K$  to obtain
 \begin{align*}
 |I| 
  &\lesssim \e_{\ve 4^j(x)} \int_{B^c}  \f{(2^j|x-y|)^{|\gamma|-K}2^{j(n-|\gamma|)}}{(1+2^j|x-y|)^{\eta-K}}\,dy \\
 &\le \Big(\f{1+|x|}{2^j}\Big)^{K-|\gamma|}2^{-j|\gamma|}\e_{\ve 4^j(x)} \int_{\RR^n}\f{2^{jn}}{(1+2^j|x-y|)^{\eta-K}}\,dy \\
 &\lesssim  \Big(\f{1+|x|}{2^j}\Big)^{K-|\gamma|}2^{-j|\gamma|}\e_{\ve 4^j(x)}.
 \end{align*}
 
 For the second term we have, by employing the Cauchy-Schwarz inequality,
\begin{align*}
|II|
&=\Big| \sum_{\kk\in\NN_0}\vp_j(\sqrt{\lambda_\kk}) \sum_{|\xi|=\kk}h_\xi(x)\int_{\RR^n}\chi(y) (y-x)^\gamma  h_\xi(y)\,dy\Big|\\
&\le  \Vert\vp\Vert_\infty \Big(\sum_{\kk\in I_j} \sum_{|\xi|=\kk} h_\xi(x)^2\Big)^{1/2}\Big(\sum_{\kk\in I_j}\sum_{|\xi|=\kk} \Big| \int_{\RR^n} (y-x)^\gamma \chi(y) h_\xi(y)\,dy\Big|^2\Big)^{1/2},
\end{align*}
where we recall that $\kk\in I_j$ means $\frac{1}{2}4^{j-2}-\floor{n/2} \le \kk\le \frac{1}{2}4^j -\ceil{n/2}$.

To estimate the second factor we note that for any $N \in \NN_0,$ it holds that
\begin{align*}
\Big| \int_{\RR^n} (y-x)^\gamma \chi(y) h_\xi(y)\,dy\Big|
&=\lambda^{-N}_{|\xi|} \Big|\int_{\RR^n} \LL_y^N\big [ (y-x)^\gamma \chi(y)\big] h_\xi(y)\,dy\Big| \\
&\le \lambda^{-N}_{|\xi|} \Big\Vert \LL^N \big[(\cdot-x)^\gamma \chi(\cdot)\big] \Big\Vert_{L^2(2B)} \Vert h_\xi\Vert_{L^2(2B)} \\
&\sim (1+|\xi|)^{-N} \Big\Vert \LL^N \big[(\cdot-x)^\gamma \chi(\cdot)\big] \Big\Vert_{L^2(2B)} \Vert h_\xi\Vert_{L^2(2B)}.
\end{align*}
Repeated application of the Leibniz' rule gives, with the sum running over indices such that $|a|+|b|\le 2N,$ $\beta+\nu=b$ and $|\beta| \le |\gamma|,$
\begin{align*}
\LL^N \big[(\cdot-x)^\gamma \chi(\cdot)\big](y)
= \sum_{a,b,\beta,\nu,\gamma} C_{a,b,\beta,\nu} \,y^a (y-x)^{\gamma-\beta}\chi^{(\nu)}(y)
\end{align*}
so that
\begin{align*}
 \Big\Vert \LL^N \big[(\cdot-x)^\gamma \chi(\cdot)\big] \Big\Vert_{L^2(2B)}
 &\sim \sum_{a,b,\beta,\nu,\gamma}  \Big(\int_{2B} \big| |y|^{|a|} |y-x|^{|\gamma|-|\beta|}|\chi^{(\nu)}(y)| \big|^2\,dy\Big)^{1/2} \\
 &\lesssim  \sum_{a,b,\beta,\nu,\gamma}  \cro(x)^{|\gamma|-|\beta|-|\nu| +n/2} \sup_{y\in 2B}|y|^{|a|} \\
 &\lesssim  \sum_{\substack{|a|+|b|\le 2N}} (1+|x|)^{|a|+|b|-|\gamma| -n/2} \\
 &\lesssim (1+|x|)^{2N-|\gamma|-n/2}.
\end{align*}
Inserting this into the estimate for $II$ leads to
\begin{align*}
|II|
&\lesssim \Big(\sum_{\kk\in I_j} \sum_{|\xi|=\kk} h_\xi(x)^2\Big)^{1/2}\Big(\sum_{\kk\in I_j}\sum_{|\xi|=\kk} \bigg|\f{(1+|x|)^{2N-|\gamma|-n/2}}{(1+|\xi|)^N}\bigg|^2 \Vert h_\xi\Vert_{L^2(2B)}^2\Big)^{1/2}  \\
&\lesssim \Big(\f{1+|x|}{2^j}\Big)^{2N-|\gamma|-n/2}2^{-j(|\gamma|+n/2)}\Big(\sum_{\kk\in I_j} \sum_{|\xi|=\kk} h_\xi(x)^2\Big)^{1/2}\Big(\sum_{\kk\in I_j}\sum_{|\xi|=\kk}  \Vert h_\xi\Vert_{L^2(2B)}^2\Big)^{1/2}\\
&\lesssim \Big(\f{1+|x|}{2^j}\Big)^{2N-|\gamma|-n/2}2^{-j(|\gamma|+n/2)}  \Big(\QQ_{4^j}(x,x)\Big)^{1/2} \Big(\int_{2B} \QQ_{4^j}(y,y) \,dy\Big)^{1/2}.
\end{align*}
where in the last line we used that  $\sum_{\kk\le 4^j}\sum_{|\xi|=\kk} h_\xi(y)^2= \QQ_{4^j}(y,y).$
We next apply the bounds \eqref{QQ est} to get
\begin{align*}
|II|
&\lesssim \Big(\f{1+|x|}{2^j}\Big)^{2N-|\gamma|-n/2}2^{-j(|\gamma|+n/2)} \big(2^{jn}\e_{\ve 4^j}(x)^2\big)^{1/2} 2^{jn/2}|2B|^{1/2} \\
&\sim \Big(\f{1+|x|}{2^j}\Big)^{2N-|\gamma|-n}2^{-j|\gamma|}\e_{\ve 4^j}(x)\\
&\le \Big(\f{1+|x|}{2^j}\Big)^{K-|\gamma|}2^{-j|\gamma|}\e_{\ve 4^j}(x)
\end{align*}
by choosing $N$ appropriately depending on whether $\f{1+|x|}{2^j}$ is larger or smaller than 1.  
\end{proof}

\section{Useful identities and estimates}\label{app:identities}
In this appendix, we present identities and estimates used in the proof of Theorem \ref{th: Tsmooth}. 
\begin{Lemma}\label{lem: identities}
\begin{enumerate}[\upshape(a)]
\item Suppose that
$$ \mathbb{F}(x,y) = \sum_{\kk\in\NN_0} f(x,y,\kk) \,\PP_\kk(x,y).$$
If $N\in\ZZ_+,$ it holds that
\begin{align}\label{eq:identity A}
	2^N(x_i- y_i)^N \mathbb{F}(x,y) = \sum_{\f{N}{2}\le \ell\le N}c_{\ell, N}\sum_{\kk\in \NN_0} \diff_\kk^\ell f(x,y,\kk)\big(\A{y}_i-\A{x}_i\big)^{2\ell-N}\PP_\kk(x,y),
\end{align}
where $c_{\ell,N} = (-4)^{N-\ell}(2N-2\ell-1)!!\binom{N}{2\ell-N}$.

\item If $N, M\in\ZZ_+,$ it holds that
\begin{align}
	x_i^M \big(\A{x}_i-\A{y}_i\big)^N &= \sum_{k=0}^M\tbinom{M}{k} \tfrac{N!}{(N-k)!}\big(\A{x}_i-\A{y}_i\big)^{N-k}x_i^{M-k} \label{eq:identity b1}
	\end{align}
	and
	\begin{align}
	(x_i-y_i)^N\big(\A{x}_i\big)^M &=\sum_{k=0}^M\tbinom{M}{k}  \tfrac{N!}{(N-k)!}\big(\A{x}_i\big)^{M-k}(x_i-y_i)^{N-k},\label{eq:identity b2}
\end{align}
where $\f{N!}{(N-k)!}$ is defined to be 0 whenever $N<k$.

\item If $\beta\in\NN_0^n$ and $\kk\in\NN_0$, it holds that
\begin{align}\label{eq:identity C}
x^\beta \,\PP_\kk(x,y)= \sum_{\omega\le \beta} \sum_{|\xi|=\kk}b_{\omega,\beta}(\xi) h_{\xi+\beta-2\omega}(x) h_\xi(y),\end{align}
where $b_{\omega,\beta}(\xi)=\prod_{i=1}^n b_{\omega_i,\beta_i}(\xi_i)$ with $ b_{\omega_i,\beta_i}(\xi_i)=0$ if $\xi_i+\beta_i-2\omega_i<0$ and  $ b_{\omega_i,\beta_i}(\xi_i)\sim \xi_i^{\beta_i/2}$ otherwise. 

\item If $\xi,\alpha\in\NN_0^n,$  $m\in \NN_0$ and $i\in\{1,\dots, n\},$ it holds that 
\begin{align} \label{eq:identity d1}
\big|\big(\A{x}_i\big)^m h_\xi(x)\big|& \le \big[2(\xi_i+m)+2\big]^{\f{m}{2}} | h_{\xi+m e_i}(x)| 
\end{align}
and 
\begin{align} \label{eq:identity d2}
\big|\big(\A{x}\big)^\alpha h_\xi(x)\big| &\le \big[2(|\xi|+|\alpha|)+2\big]^{\f{|\alpha|}{2}} | h_{\xi+\alpha}(x)|.
\end{align}
\end{enumerate}
\end{Lemma}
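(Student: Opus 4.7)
The plan is to handle the four parts of Lemma~\ref{lem: identities} in sequence; parts (b)-(d) are direct algebraic exercises in the Heisenberg-type algebra generated by the creation operators $\A{x}_i$, while part (a) is the substantive step and the main obstacle.

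For (d), I would iterate the Hermite-function recurrence $\A{x}_i h_\xi = \sqrt{2(\xi_i + 1)}\, h_{\xi + e_i}$ to obtain $(\A{x}_i)^m h_\xi = \bigl(\prod_{k=1}^m \sqrt{2(\xi_i + k)}\bigr) h_{\xi + m e_i}$; bounding each factor by $\sqrt{2(\xi_i + m) + 2}$ gives \eqref{eq:identity d1}, and \eqref{eq:identity d2} follows by applying \eqref{eq:identity d1} coordinate-by-coordinate (the operators $\A{x}_i$ commute pairwise) together with the elementary inequality $\prod_i (\xi_i + \alpha_i)^{\alpha_i} \le (|\xi| + |\alpha|)^{|\alpha|}$. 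For (c), I would iterate the dual recurrence $2 x_i h_\xi = \A{x}_i h_\xi + A^{*x}_i h_\xi$, where $A^{*x}_i := x_i + \partial_{x_i}$ is the annihilation operator satisfying $A^{*x}_i h_\xi = \sqrt{2\xi_i}\, h_{\xi - e_i}$; each multiplication by $x_i$ shifts the $i$-th Hermite index by $\pm 1$, so after $\beta_i$ multiplications in the $i$-th coordinate one obtains a sum parametrized by $\omega_i \in \{0, \ldots, \beta_i\}$ counting down-shifts, and summation over $|\xi| = \kk$ yields \eqref{eq:identity C} with product coefficients $b_{\omega, \beta}(\xi) = \prod_i b_{\omega_i, \beta_i}(\xi_i)$ of the stated size $\xi_i^{\beta_i/2}$.

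For (b), the single commutator $[\A{x}_i, x_i] = -1$, together with $[\A{x}_i, y_i] = [\A{y}_i, x_i] = 0$, gives the central (scalar) commutators $[x_i,\, \A{x}_i - \A{y}_i] = 1$ and $[x_i - y_i,\, \A{x}_i] = 1$. Both \eqref{eq:identity b1} and \eqref{eq:identity b2} then follow from the standard Leibniz-type rule $[B, A^N] = N A^{N-1}[B, A]$ valid when $[B, A]$ is central, iterated by induction on $M$. The binomial $\binom{M}{k}$ counts the ways to choose which $k$ of the $M$ copies of $x_i$ (respectively $x_i - y_i$) commute past the creation operator block, while the falling factorial $N!/(N-k)!$ counts the orderings of the creation operators each one passes through.

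Part (a) is the main obstacle. The key preliminary step I plan to establish is the projector identity
\begin{equation*}
2(x_i - y_i)\PP_\kk(x,y) = (\A{x}_i - \A{y}_i)\bigl[\PP_\kk(x,y) - \PP_{\kk - 1}(x,y)\bigr],
\end{equation*}
obtained by writing $2 x_i = \A{x}_i + A^{*x}_i$, subtracting the analogous expression in $y$, and using the adjoint-transfer relation $A^{*x}_i \PP_\kk(x,y) = \A{y}_i \PP_{\kk - 1}(x,y)$; this last identity is immediate from $A^{*x}_i h_\xi = \sqrt{2 \xi_i}\, h_{\xi - e_i}$ and the re-indexing $\xi \mapsto \eta + e_i$ in the sum defining $\PP_\kk$ (with the convention $\PP_{-1} = 0$). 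Abel summation in $\kk$ then yields the base case $N = 1$ of \eqref{eq:identity A} with $c_{1,1} = 1$. For general $N$ I proceed by induction: multiply the $N$-step formula by $2(x_i - y_i)$, commute this factor past the existing powers of $\A{y}_i - \A{x}_i$ using the central commutator $[\A{y}_i - \A{x}_i,\, x_i - y_i] = 2$ from part (b), and then re-apply the base projector identity (to the remaining factor of $2(x_i - y_i)\PP_\kk$) followed by another Abel summation. Each induction step generates two types of new contributions --- one from the commutator (lowering the creation-operator power by two, raising the difference order by one, with net weight $-4(2\ell - N)$) and one from the Abel step (lowering the creation-operator power by one, raising the difference order by one) --- yielding the two-term recurrence
\begin{equation*}
c_{\ell, N+1} = c_{\ell-1, N} - 4(2\ell - N)\, c_{\ell, N},
\end{equation*}
which is solved by the closed form $c_{\ell, N} = (-4)^{N - \ell}(2N - 2\ell - 1)!!\binom{N}{2\ell - N}$. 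The main difficulty is executing this combinatorial bookkeeping cleanly (tracking signs, double-factorial factors, and boundary indices $\ell = N/2, N$); I would verify the small cases $N = 1, 2, 3$ by direct computation as a sanity check before writing out the general induction.
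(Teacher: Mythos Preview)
Your proposal is correct and essentially reconstructs the proofs that the paper defers to in the cited references (Petrushev--Xu and Thangavelu): parts (b)--(d) are handled via the Hermite creation/annihilation recurrences and the central commutator $[\A{x}_i,x_i]=-1$, exactly as you outline, and for part (a) the projector identity $2(x_i-y_i)\PP_\kk=(\A{x}_i-\A{y}_i)(\PP_\kk-\PP_{\kk-1})$ followed by Abel summation and the induction driven by the two-term recurrence $c_{\ell,N+1}=c_{\ell-1,N}-4(2\ell-N)c_{\ell,N}$ is precisely the argument behind \cite[Lemma~8]{MR2399106}. The paper's own proof of this lemma is only a list of citations, so your write-up is in fact more self-contained than what appears there.
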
 

\begin{proof}[Proof of Lemma \ref{lem: identities}]
The identity in part (a) can be found in \cite[Lemma 8]{MR2399106} and \cite[p.72]{MR1215939}  with $k$ as a function of $\kk$ only. However, it can be checked that the proof also works  when $k$ depends on both $x$ and $y$. 
Part (b) is from   \cite[Lemma 9]{MR2399106}.  Part (c) is  \cite[equation (6.14)]{MR2399106} with $\mu=0$. In part (d), estimate \eqref{eq:identity d1} follows from \cite[equation (6.5)]{MR2399106}:
$$\big(\A{x}_i\big)^m h_\xi(x) = \prod_{r=0}^{m-1}\sqrt{2(\xi_i+r)+2}\,\, h_{\xi+me_i}(x).$$
The inequality  \eqref{eq:identity d2} follows from applying \eqref{eq:identity d1} repeatedly. 
\end{proof}

\begin{Lemma}\label{lem: leibniz}
\begin{enumerate}[\upshape(a)]
\item If $\ell \in\NN_0,$ it holds that
\begin{align}\label{eq:leibniz1}
 \diff_\kk^\ell \big(f(\kk)\,g(\kk)\big) = \sum_{r=0}^\ell \binom{\ell}{r}\diff_\kk^r f(\kk)\,\diff_\kk^{\ell-r}g(\kk+r).
\end{align}
\item If $\alpha\in\NN_0^n$, it holds that
\begin{align}\label{eq:leibniz2}
A^\alpha (fg) =\sum_{\nu\le\alpha} \binom{\alpha}{\nu} (-1)^\nu\partial^\nu f \, A^{\alpha-\nu} g.
\end{align}
\end{enumerate}
\end{Lemma}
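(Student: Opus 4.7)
The plan is to prove each part by induction, keeping careful track of the index shifts and the order of operators.

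For part (a), the base case $\ell=0$ is trivial. I will first verify the one-step identity
\begin{equation*}
\diff_\kk(fg)(\kk) = f(\kk)\,\diff_\kk g(\kk) + \diff_\kk f(\kk)\, g(\kk+1),
\end{equation*}
which follows by adding and subtracting $f(\kk)g(\kk+1)$ inside $f(\kk+1)g(\kk+1)-f(\kk)g(\kk)$. For the inductive step, I will apply $\diff_\kk$ to the right-hand side of \eqref{eq:leibniz1} at level $\ell$, apply the one-step identity summand by summand, reindex the sum corresponding to $\diff_\kk$ acting on the first factor, and combine via Pascal's identity $\binom{\ell}{r}+\binom{\ell}{r-1}=\binom{\ell+1}{r}$ to recover the formula at level $\ell+1$.

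For part (b), I will first establish the one-variable identity
\begin{equation*}
A_i^m(fg) \;=\; \sum_{k=0}^m \binom{m}{k}(-1)^k (\partial^k_i f)\,(A_i^{m-k} g), \qquad m\in\NN_0,
\end{equation*}
by induction on $m$. The base case $m=1$ is the direct calculation $A_i(fg)=fA_ig-(\partial_i f)g$, which follows from $A_i = -\partial/\partial x_i + x_i$ and the ordinary Leibniz rule for $\partial_i$. The inductive step consists of applying $A_i$ to each summand of the formula at level $m$, expanding using the base case, reindexing, and invoking Pascal's identity once again. The multi-index identity \eqref{eq:leibniz2} will then follow by iterating the one-variable version coordinate by coordinate: since $A_i$ commutes with $A_j$, with $\partial_j$, and $\partial_i$ commutes with $\partial_j$ for $i\ne j$, each application of $A_i^{\alpha_i}$ to a product of the form $(\partial^\mu f)(A^\beta g)$ produces exactly the corresponding one-variable expansion in the $i$-th slot without disturbing the other slots. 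Collecting the multi-indices and binomial factors yields \eqref{eq:leibniz2}.

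Neither part is particularly deep, so I do not expect a serious obstacle. The main point of care will be the index shift $g(\kk)\leadsto g(\kk+r)$ in part (a), which must be respected when $\diff_\kk$ is distributed in the inductive step, and the bookkeeping of commuting operators when iterating the one-variable identity in part (b). Both points are handled by elementary manipulations (Pascal's identity, binomial theorem, and the explicit form of $A_i$).
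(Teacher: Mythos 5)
Your proof is correct. For part (a) the paper simply remarks the identity is "well known," so your explicit induction (one‑step identity, distribute the forward difference over each summand, reindex, Pascal) is a standard supply of the missing argument — nothing to compare. For part (b), however, you take a genuinely different route from the paper. The paper does not induct directly on the Leibniz rule for $A_i^m$; instead it first establishes the conjugation identity $A_i^m = (-1)^m e^{x_i^2/2}\partial_i^m e^{-x_i^2/2}$ (by direct computation for $m=1$, then induction on $m$), and then obtains the one‑variable Leibniz rule for $A_i^{\alpha_i}$ in a single step by applying the ordinary Leibniz rule to $\partial_i^{\alpha_i}(e^{-x_i^2/2}fg)$ and recognizing $\partial_i^{\alpha_i-\nu_i}(e^{-x_i^2/2}g)$ as $(-1)^{\alpha_i-\nu_i}e^{-x_i^2/2}A_i^{\alpha_i-\nu_i}g$ via the same conjugation identity. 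This avoids any appeal to Pascal's identity and any reindexing. Your approach instead inducts on $m$ for the Leibniz rule itself, using the $m=1$ case $A_i(fg)=fA_ig-(\partial_i f)g$ together with Pascal's identity; this is more elementary and self-contained, at the cost of one more layer of binomial bookkeeping. Your passage from the single-variable identity to the multi-index case, via commutativity of $\{A_i,\partial_i\}$ with $\{A_j,\partial_j\}$ for $i\ne j$, is the same reduction the paper uses, and is correct.
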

\begin{proof}[Proof of Lemma \ref{lem: leibniz}]
Part (a) is well known. For part (b), first note that the following representation for Hermite derivatives holds:
\begin{align}\label{eq:Arep}
A_i^m = (-1)^me^{x_i^2/2} \partial_i^m e^{-x_i^2/2}\qquad \forall m\in\NN_0.
\end{align}
This identity can be obtained  by direct calculation for $m=1$ and by  induction for all $m.$
We next show that \eqref{eq:Arep} gives
\begin{align}\label{eq:leibniz3}
 A_i^{\alpha_i} (fg) = \sum_{\nu_i=0}^{\alpha_i} \binom{\alpha_i}{\nu_i}(-1)^{\nu_i} \partial_i^{\nu_i} f \, A_i^{\alpha_i-\nu_i} g.
\end{align}
Indeed, by \eqref{eq:Arep} and the Leibniz rule for differentiation we obtain
\begin{align*}
A_i^{\alpha_i} (fg) 
&= (-1)^{\alpha_i}e^{x_i^2/2} \partial_i^{\alpha_i} \big( e^{-x_i^2/2}fg\big) \\
&=  (-1)^{\alpha_i}e^{x_i^2/2} \sum_{\nu_i=0}^{\alpha_i} \binom{\alpha_i}{\nu_i} \partial_i^{\nu_i} f \, \cdot \partial_i^{\alpha_i-\nu_i}\big(e^{-x_i^2/2} g\big)\\
&=\sum_{\nu_i=0}^{\alpha_i} \binom{\alpha_i}{\nu_i} (-1)^{\nu_i}\partial_i^{\nu_i} f \, \cdot(-1)^{\alpha_i-\nu_i}e^{x_i^2/2}\partial_i^{\alpha_i-\nu_i}\big(e^{-x_i^2/2} g\big).
\end{align*}
Equality \eqref{eq:leibniz3} follows by applying \eqref{eq:Arep} again. The identity  \eqref{eq:leibniz2} then follows by applying \eqref{eq:leibniz3} to each component $1\le i\le n$. 
\end{proof}

   \section{Remarks about Hermite Besov and Triebel--Lizorkin spaces}\label{app: embeddings}
   
 In this appendix, we present some embeddings of Hermite Besov and Hermite Triebel--Lizorkin spaces. The embeddings stated in Corollary~\ref{coro: emb}, a consequence of Theorem~\ref{thm: emb}, are used in the proof of Corollary~\ref{coro: appnonlin}. In addition, we comment on the Fatou property of Hermite Besov and Hermite Triebel--Lizorkin spaces, which is also used in the proof of Corollary~\ref{coro: appnonlin}.

 \begin{Theorem} \label{thm: emb}
 \begin{enumerate}[\upshape(a)]
\item\label{item: emb1}  If  $\alpha\in \RR, \varepsilon>0,$ $0<q\le \infty,$ $0<q_1\le \infty,$  and $0<p\le \infty$ for Besov spaces or $0<p<\infty$ for Triebel--Lizorkin spaces, it holds that 
 \begin{equation*}
 A^{p,q}_{\alpha+\varepsilon}(\LL)\hookrightarrow A^{p,q_1}_{\alpha}(\LL).
 \end{equation*}
 \item\label{item: emb2} If $0<q\le \infty,$ $0<p<\infty$ and $\alpha\in \RR,$  it holds that 
  \begin{equation*}
 B^{p,\min(p,q)}_\alpha(\LL)\hookrightarrow  F^{p,q}_\alpha(\LL)\hookrightarrow   B^{p,\max(p,q)}_\alpha(\LL).
 \end{equation*}
 \item \label{item: emb3} If $0<q\le \infty,$ $0<p<p_1<\infty$ and $\alpha,\alpha_1\in \RR$ are such that $\alpha_1<\alpha,$  it holds that
 \begin{equation*}
 A^{p,q}_{\alpha}(\LL)\hookrightarrow A^{p_1,q}_{\alpha_1}(\LL)\quad \text{if}\quad \alpha-\frac{n}{p}=\alpha_1-\frac{n}{p_1}.
 \end{equation*}
 \end{enumerate}
 \end{Theorem}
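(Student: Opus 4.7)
The plan is to prove each of the three parts by elementary manipulations of the Littlewood--Paley norm definitions, supplemented in the most delicate case by the frame decomposition of Theorem~\ref{th: frame}. Parts (\ref{item: emb1}) and (\ref{item: emb2}) will follow from Hölder-type and Minkowski-type inequalities. For (\ref{item: emb1}), I plan to write $2^{j\alpha} = 2^{-j\varepsilon}\cdot 2^{j(\alpha+\varepsilon)}$ inside the $\ell^{q_1}$-sum (or its pointwise inner version for Triebel--Lizorkin spaces) and absorb the geometrically decaying factor $\{2^{-j\varepsilon}\}_{j\ge 0}$: when $q_1 \le q$ this is done by Hölder's inequality with exponents $q/q_1$ and $q/(q-q_1)$, and when $q_1 > q$ by the uniform bound $2^{j(\alpha+\varepsilon)}\|\varphi_j(\sqrt{\LL})f\|_{L^p} \le \|f\|_{B^{p,q}_{\alpha+\varepsilon}}$ combined with the summability of $\sum_j 2^{-j\varepsilon q_1}$; the Triebel--Lizorkin case is identical but carried out pointwise in $x$ before taking the outer $L^p$-norm. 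For (\ref{item: emb2}), the chain $B^{p,\min(p,q)}_\alpha \hookrightarrow F^{p,q}_\alpha \hookrightarrow B^{p,\max(p,q)}_\alpha$ decomposes into subcases $q \le p$ and $q \ge p$; in each case the key ingredients are the Fubini-type identity $F^{p,p}_\alpha = B^{p,p}_\alpha$, the pointwise comparison $\ell^p \hookrightarrow \ell^q$ when $p \le q$ (and its reverse when $p \ge q$), and Minkowski's integral inequality in the form $\|\cdot\|_{\ell^q(L^p)} \le \|\cdot\|_{L^p(\ell^q)}$ when $p \le q$ (with the reverse when $p \ge q$).

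For (\ref{item: emb3}), my plan is to use Theorem~\ref{th: frame}: since $T_\psi \circ S_\varphi = I$ on $A^{p,q}_\alpha(\LL)$, it suffices to establish the sequence-space embedding $a^{p,q}_\alpha(\LL) \hookrightarrow a^{p_1,q}_{\alpha_1}(\LL)$. For the Besov sequence spaces I will fix a level $j \in \NN_0$ and, for each $R \in \E_j$, decompose $|R|^{1/p_1 - 1/2} = |R|^{1/p_1 - 1/p}\cdot |R|^{1/p - 1/2}$. Since $p < p_1$, the exponent $(1/p_1 - 1/p)p_1 = 1 - p_1/p$ is negative, and Lemma~\ref{lem: tiles}(b) guarantees $|R| \gtrsim 2^{-jn}$; hence $|R|^{1 - p_1/p} \lesssim 2^{jn(p_1/p - 1)}$. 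Combining this estimate with the pointwise embedding $\ell^p \hookrightarrow \ell^{p_1}$, the identity $\alpha - n/p = \alpha_1 - n/p_1$, and the $\ell^q$-norm in $j$ yields $\|s\|_{b^{p_1,q}_{\alpha_1}} \lesssim \|s\|_{b^{p,q}_\alpha}$, which then transfers to the distribution-space embedding $B^{p,q}_\alpha(\LL) \hookrightarrow B^{p_1,q}_{\alpha_1}(\LL)$ via the analysis--synthesis decomposition.

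The main obstacle will be the Triebel--Lizorkin case of (\ref{item: emb3}): here the sequence-space norm depends on the pointwise quantity $(\sum_{j\ge 0} 2^{jq\alpha}|R_j(x)|^{-q/2}|s_{R_j(x)}|^q)^{1/q}$, where $R_j(x)$ is the unique tile in $\E_j$ containing $x$, and by Lemma~\ref{lem: tiles} the volumes $|R_j(x)|$ can range anywhere from $\sim 2^{-jn}$ along the diagonal to $\sim 2^{-jn/3}$ off-diagonal, so a naive pointwise comparison cannot supply the gain of order $2^{jn(1/p - 1/p_1)}$ uniformly in $x$. To overcome this, I would introduce a Peetre-type maximal function $\varphi_j^*(\sqrt{\LL})f(x) = \sup_{y \in \RR^n} |\varphi_j(\sqrt{\LL})f(y)|/(1 + 2^j|x - y|)^{n/r}$ for some $0 < r < p$, establish an equivalent norm for $F^{p,q}_\alpha(\LL)$ in terms of $\varphi_j^*(\sqrt{\LL})$, and then derive a pointwise control $\varphi_j^*(\sqrt{\LL})f(x) \lesssim \MM_r(\varphi_j(\sqrt{\LL})f)(x)$. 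Combining this with the Fefferman--Stein vector-valued inequality of Lemma~\ref{fefferman-stein B} and a Plancherel--Pólya--Nikolskii-type estimate for the spectral projections onto $V_{N_j}$ would then supply the Sobolev gain needed to close the argument.
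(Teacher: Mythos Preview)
Your proposal is correct and aligns with the standard approach the paper invokes by citation: the paper offers no self-contained proof here, referring to Triebel \cite{MR3024598} for parts (a)--(b) and to Petrushev--Xu \cite{MR2399106}, Propositions~6 and~7, for part (c). Your elementary $\ell^q$-manipulations for (a)--(b), your sequence-space argument for the Besov case of (c), and your Peetre-maximal-function plan for the Triebel--Lizorkin case of (c) are exactly the ingredients contained in those references.
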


The proofs of the embeddings stated in Theorem~\ref{thm: emb} are the same as those in the Euclidean setting; see  \cite[p.47, Proposition 2]{MR3024598} for \eqref{item: emb1} and \eqref{item: emb2} and  \cite[Propositions 6 and 7]{MR2399106} for \eqref{item: emb3}.
 
 \begin{Corollary}\label{coro: emb}  Let   $0<q\le \infty.$  If $1<p<\infty$ and $ \varepsilon>0,$ then  $A^{p,q}_\varepsilon (\LL)\hookrightarrow L^p(\RR^n);$ if $0<p\le 1$ and $\varepsilon>n(\frac{1}{p}-1),$ there exists $p_1> 1$ such that $A^{p,q}_\varepsilon (\LL)\hookrightarrow L^{p_1}(\RR^n).$
 \end{Corollary}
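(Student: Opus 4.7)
The plan is to reduce both statements to Theorem \ref{thm: emb} together with the identification $L^p(\RR^n)=F^{p,2}_0(\LL)$ for $1<p<\infty.$

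First, suppose $1<p<\infty$ and $\varepsilon>0.$ I would split into the Besov and Triebel--Lizorkin cases. For $A=F,$ I apply part \eqref{item: emb1} of Theorem \ref{thm: emb} (with $\alpha=0$ and $q_1=2$) to get $F^{p,q}_{\varepsilon}(\LL)\hookrightarrow F^{p,2}_0(\LL),$ and then invoke $F^{p,2}_0(\LL)=L^p(\RR^n).$ For $A=B,$ I first use part \eqref{item: emb1} with $\alpha=0$ and $q_1=\min(p,2)$ to obtain $B^{p,q}_{\varepsilon}(\LL)\hookrightarrow B^{p,\min(p,2)}_0(\LL),$ then part \eqref{item: emb2} to pass to $F^{p,2}_0(\LL)=L^p(\RR^n).$

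Second, suppose $0<p\le 1$ and $\varepsilon>n(\tfrac{1}{p}-1).$ The idea is to trade smoothness for integrability via part \eqref{item: emb3} until we land in a space with index greater than one, and then invoke the first case. Concretely, I would choose $p_1>1$ so that $\tfrac{1}{p_1}$ lies in the nonempty interval
\[
\Big(\max\{\tfrac{1}{p}-\tfrac{\varepsilon}{n},\,0\},\,1\Big);
\]
this interval is nonempty precisely because $\varepsilon>n(\tfrac{1}{p}-1)$ forces $\tfrac{1}{p}-\tfrac{\varepsilon}{n}<1.$ Setting $\alpha_1=\varepsilon-\tfrac{n}{p}+\tfrac{n}{p_1},$ the choice guarantees $0<\alpha_1<\varepsilon$ and $p<p_1,$ so part \eqref{item: emb3} of Theorem \ref{thm: emb} applies and yields $A^{p,q}_{\varepsilon}(\LL)\hookrightarrow A^{p_1,q}_{\alpha_1}(\LL).$ Since $p_1>1$ and $\alpha_1>0,$ the first case of the present corollary then gives $A^{p_1,q}_{\alpha_1}(\LL)\hookrightarrow L^{p_1}(\RR^n),$ completing the proof.

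There is no substantive obstacle here, as both assertions are bookkeeping consequences of Theorem \ref{thm: emb}; the only point requiring any care is checking that the constraint $\varepsilon>n(\tfrac{1}{p}-1)$ in the subcritical range is exactly what is needed to open up the admissible interval for $1/p_1,$ which is where the Sobolev-type embedding in part \eqref{item: emb3} can deliver a positive smoothness index $\alpha_1$ together with an integrability exponent $p_1>1.$
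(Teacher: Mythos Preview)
Your proof is correct and follows essentially the same route as the paper's own argument: both cases reduce to Theorem~\ref{thm: emb} and the identification $F^{p,2}_0(\LL)=L^p(\RR^n)$, with the $0<p\le 1$ case handled by trading smoothness for integrability via part~\eqref{item: emb3} to reach an exponent $p_1>1$ and a positive $\alpha_1$, then invoking the first case. Your description of the admissible interval for $1/p_1$ is just a more explicit rendering of the paper's existence claim for $p_1$.
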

 \begin{proof} Let $0<q\le \infty.$ 
 
 \medskip
 
 \underline{Case $1<p<\infty$ and  $\varepsilon>0:$} 
 Taking $\alpha=0$ in  part \eqref{item: emb1} of Theorem~\ref{thm: emb} we obtain that $F^{p,q}_{\varepsilon}(\LL)\hookrightarrow F^{p,2}_{0}(\LL)=L^p(\RR^n).$  Using parts \eqref{item: emb1}  and \eqref{item: emb2} of Theorem~\ref{thm: emb} we have $B^{p,q}_{\alpha+\varepsilon}(\LL)\hookrightarrow B^{p,\min(p,2)}_{\alpha}(\LL)\hookrightarrow  F^{p,2}_\alpha(\LL);$  then  $\alpha=0$ implies $B^{p,q}_{\varepsilon}(\LL)\hookrightarrow  F^{p,2}_0(\LL)=L^p(\RR^n).$

\medskip

 \underline{Case $0<p\le 1$ and $\varepsilon>n(\frac{1}{p}-1):$}  Let $p_1> 1$ be such that $p_1>p$ and $\varepsilon>n(\frac{1}{p}-\frac{1}{p_1});$ such $p_1$ exists since $\varepsilon>n(\frac{1}{p}-1)\ge 0.$ Setting $\alpha_1=\varepsilon-n(\frac{1}{p}-\frac{1}{p_1}),$ part~\eqref{item: emb3} of Theorem~\ref{thm: emb} and the previous case imply that 
 \begin{equation*}
 A^{p,q}_{\varepsilon}(\LL)\hookrightarrow A^{p_1,q}_{\alpha_1}(\LL)\hookrightarrow L^{p_1}(\RR^n).\qedhere
 \end{equation*}
     \end{proof}
  
  \medskip
  
  Next, we comment about the Fatou property for Hermite Besov and Hermite Triebel--Lizorkin spaces.
  
    Let  $\mathcal{A}$ be a quasi-Banach space such that $\sz(\RR^n)\hookrightarrow\mathcal{A}\hookrightarrow\sz'(\RR^n).$ The space $\mathcal{A}$ is said to  have the Fatou property if  for every sequence $\{f_j\}_{j\in \NN}\subset \mathcal{A}$ that converges in $\sz'(\RR^n),$ as $j\to \infty,$ and that satisfies $\liminf_{j\to\infty} \|f_j\|_{\mathcal{A}}<\infty,$ it follows that $\lim_{j\to\infty}f_j\in \mathcal{A}$ and $\|\lim_{j\to\infty}f_j\|_{\mathcal{A}}\lesssim \liminf_{j\to\infty} \|f_j\|_{\mathcal{A}},$ where the implicit constant is independent of $\{f_j\}_{j\in\NN}.$ 

It can be shown, using standard proofs (see for instance \cite[p.48, Proposition 2.8]{MR2683024}), that $A^{p,q}_{\alpha}(\LL)$  posses the Fatou property for any $\alpha\in \RR,$  $0<q\le \infty,$ 
$0<p\le \infty$ for Besov spaces and $0<p<\infty$ for Triebel--Lizorkin spaces This is due to the following facts: (1) if $f,g\in L^p(\RR^n)$ and $|f|\le|g|$ pointwise a.e., then $\|f\|_{L^p}\le \|g\|_{L^p};$ (2) if $\{f_j\}_{j\in \NN}\subset L^p(\RR^n)$ and $f_j\ge 0$ poinwise a.e., then $\|\liminf_{j\to\infty} f_j\|_{L^p}\le \liminf_{j\to\infty}\|f_j\|_{L^p};$ (3) if $f_j\to f$ in $\sz'(\RR^n)$ then,   for any $k\in \NN_0$ and any admissible pair $(\varphi_0,\varphi_j),$ $\varphi_k(\sqrt{\LL})f_j\to\varphi_k(\sqrt{\LL})f$ pointwise as $j\to \infty.$

\section{Operator norm}\label{app: opnorm}

The following result about the operator norm of pseudo-multipliers is used in the proof of Corollary~\ref{coro: appnonlin}.

\begin{Lemma} Let $m\in \RR,$ $\rr\ge 0,$ $\dd\ge 0,$   $\N,\K\in \NN_0,$ $\alpha, \tilde{\alpha}\in \RR,$ $0<p< \infty,$ $0<q< \infty,$ and $0<\tilde{p}\le\infty$ for Besov spaces or $0<\tilde{p}<\infty$ for Triebel--Lizorkin spaces.  If $T_\sigma$ is bounded from $A^{p,q}_\alpha(\LL)$ to $A^{\tilde{p},\tilde{q}}_{\tilde{\alpha}}(\LL)$  for all  $\sigma\in \SC^{m,\K,\N}_{\rr,\dd},$ it holds that 
\begin{equation*}
\|T_\sigma\|_{A^{p,q}_\alpha\to A^{\tilde{p},\tilde{q}}_{\tilde{\alpha}}}\lesssim \mathop{\max_{0\le |\nu|\le \N}}_{0\le \kappa\le \K} \mathop{\sup_{x\in \RR^n}}_{\xs\in \NN_0}| \partial^\nu_x \diff^\kappa_\xs \sigma(x,\xs)|  (1+\sqrt{\xs})^{-m+2\rr\kappa-\dd|\nu|} \quad \forall \sigma\in \SC^{m,\K,\N}_{\rr,\dd}.
\end{equation*} 
\end{Lemma}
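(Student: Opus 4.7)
The natural strategy is to apply the closed graph theorem to the linear map $\sigma\mapsto T_\sigma$. Equip the symbol class $\SC^{m,\K,\N}_{\rr,\dd}$ with the Banach norm
\begin{equation*}
\|\sigma\|_{\SC}\;=\mathop{\max_{0\le |\nu|\le \N}}_{0\le \kappa\le \K}\;\mathop{\sup_{x\in \RR^n}}_{\xs\in \NN_0}\big| \partial^\nu_x \diff^\kappa_\xs \sigma(x,\xs)\big|\,(1+\sqrt{\xs})^{-m+2\rr\kappa-\dd|\nu|},
\end{equation*}
whose completeness is routine: a Cauchy sequence converges uniformly together with its $x$-derivatives and $\xs$-differences, and the pointwise limit inherits the defining bounds. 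Let $\mathfrak{B}$ denote the space of bounded linear operators from $A^{p,q}_\alpha(\LL)$ to $A^{\tilde p,\tilde q}_{\tilde\alpha}(\LL)$ equipped with the operator quasi-norm; this is a quasi-Banach space. The hypothesis asserts that $\mathcal{T}:\sigma\mapsto T_\sigma$ is a well-defined linear map $\SC^{m,\K,\N}_{\rr,\dd}\to\mathfrak{B}$, and the conclusion is precisely that $\mathcal{T}$ is continuous. Since both spaces are F-spaces (quasi-Banach spaces admit a complete invariant metric via Aoki--Rolewicz), the closed graph theorem reduces the problem to verifying that the graph of $\mathcal{T}$ is closed.

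For the graph condition, suppose $\sigma_n\to 0$ in $\SC^{m,\K,\N}_{\rr,\dd}$ and $T_{\sigma_n}\to S$ in $\mathfrak{B}$; I will show $S=0$. Test on an arbitrary $f\in\sz(\RR^n)$ by expanding $f=\sum_{\xi}\langle f,h_\xi\rangle\,h_\xi$, whence
\begin{equation*}
T_{\sigma_n}f(x)\;=\sum_{\xi\in\NN_0^n}\langle f,h_\xi\rangle\,\sigma_n(x,\lambda_{|\xi|})\,h_\xi(x).
\end{equation*}
Since $\LL^Nf\in L^2(\RR^n)$ and $\langle f,h_\xi\rangle=\lambda_{|\xi|}^{-N}\langle \LL^N f, h_\xi\rangle$, the Hermite coefficients satisfy $|\langle f,h_\xi\rangle|\le \lambda_{|\xi|}^{-N}\|\LL^N f\|_{L^2}$ for every $N\in\NN_0$. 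Combining this with the pointwise bound $|h_\xi(x)|\le\QQ_{|\xi|}(x,x)^{1/2}\lesssim(|\xi|+1)^{n/4}$ from \eqref{QQ bound} and the estimate $|\sigma_n(x,\lambda_{|\xi|})|\le\|\sigma_n\|_{\SC}(1+\sqrt{\lambda_{|\xi|}})^m$ (which uses only the $\nu=\kappa=0$ seminorm), choosing $N$ sufficiently large relative to $m$ and $n$ yields
\begin{equation*}
\|T_{\sigma_n}f\|_{L^\infty(\RR^n)}\;\lesssim_{f,N}\|\sigma_n\|_{\SC}\longrightarrow 0.
\end{equation*}
Hence $T_{\sigma_n}f\to 0$ in $\sz'(\RR^n)$. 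On the other hand, $T_{\sigma_n}f\to Sf$ in $A^{\tilde p,\tilde q}_{\tilde\alpha}(\LL)\hookrightarrow\sz'(\RR^n)$, so $Sf=0$ in $\sz'$. Since this holds for every $f$ in the dense subspace $\sz(\RR^n)\subset A^{p,q}_\alpha(\LL)$ (density is guaranteed by $p,q<\infty$) and $S$ is continuous, we conclude $S=0$.

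The genuine content is the uniform estimate for $T_{\sigma_n}f$ displayed above: it is what permits separating the symbol from the fixed Schwartz test function and converting convergence of $\sigma_n$ in $\|\cdot\|_{\SC}$ into convergence of $T_{\sigma_n}f$ in $\sz'$. Everything else---completeness of the symbol space, quasi-Banach structure on $\mathfrak{B}$, density of $\sz(\RR^n)$ in $A^{p,q}_\alpha(\LL)$, and the continuous embedding $A^{\tilde p,\tilde q}_{\tilde\alpha}(\LL)\hookrightarrow\sz'(\RR^n)$---is routine or cited from the paper, so no additional obstacle is anticipated.
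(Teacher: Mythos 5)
Your proposal is correct and follows essentially the same route as the paper: both invoke the closed graph theorem for the linear map $\sigma\mapsto T_\sigma$ between the Banach space of symbols and the quasi-Banach space of bounded operators, and both verify graph closedness by testing against Schwartz functions, controlling $T_{\sigma_n}f$ pointwise via the rapid decay of Hermite coefficients together with a bound on $h_\xi$, and then exploiting density of $\sz(\RR^n)$ and the continuous embedding $A^{\tilde p,\tilde q}_{\tilde\alpha}(\LL)\hookrightarrow\sz'(\RR^n)$. The only cosmetic differences are that you reduce to $\sigma_n\to 0$ (equivalent by linearity) and use the polynomial bound $|h_\xi(x)|\lesssim(1+|\xi|)^{n/4}$ in place of the uniform bound on Hermite functions cited by the authors; both suffice once $N$ is taken large.
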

\begin{proof} We follow ideas from the proof of \cite[Lemma 2.6]{MR3205530}. Set
\begin{equation*}
\|\sigma\|_{\SC^{m,\K,\N}_{\rr,\dd}}= \mathop{\max_{0\le |\nu|\le \N}}_{0\le \kappa\le \K} \mathop{\sup_{x\in \RR^n}}_{\xs\in \NN_0}| \partial^\nu_x \diff^\kappa_\xs \sigma(x,\xs)|  (1+\sqrt{\xs})^{-m+2\rr\kappa-\dd|\nu|} \quad \forall \sigma\in \SC^{m,\K,\N}_{\rr,\dd};
\end{equation*}
then $\SC^{m,\K,\N}_{\rr,\dd}$ is a Banach space with the norm $\|\cdot\|_{\SC^{m,\K,\N}_{\rr,\dd}},$  
Define the linear operator
\begin{equation*}
\mathcal{U}: \SC^{m,\K,\N}_{\rr,\dd}\to L(A^{p,q}_\alpha(\LL), A^{\tilde{p},\tilde{q}}_{\tilde{\alpha}}(\LL)), \quad \mathcal{U}(\sigma)=T_\sigma,
\end{equation*}
where $L(A^{p,q}_\alpha(\LL), A^{\tilde{p},\tilde{q}}_{\tilde{\alpha}}(\LL))$ is the quasi-Banach space of all linear bounded operators from $A^{p,q}_\alpha(\LL)$ to $A^{\tilde{p},\tilde{q}}_{\tilde{\alpha}}(\LL)$ with the usual operator norm. We will show that the graph of $\mathcal{U}$ is closed; as a consequence of the Closed Graph Theorem, it follows that $\mathcal{U}$ is continuous and therefore the desired result follows.

Let $\{(\sigma_j,T_{\sigma_j})\}_{j\in \NN}$ be a sequence in the graph of $\mathcal{U}$ that converges to $(\sigma, T)\in  \SC^{m,\K,\N}_{\rr,\dd}\times L(A^{p,q}_\alpha(\LL), A^{\tilde{p},\tilde{q}}_{\tilde{\alpha}}(\LL))$ in the product topology.  We will show that $T(f)=T_\sigma(f)$ for all $f\in \sz(\RR^n);$ assuming the latter, since $\sz(\RR^n)$ is dense in $A^{p,q}_\alpha(\LL)$ and $T_\sigma, T\in L(A^{p,q}_\alpha(\LL), A^{\tilde{p},\tilde{q}}_{\tilde{\alpha}}(\LL)),$ it follows that $T_\sigma=T.$ As a consequence, the graph of $\mathcal{U} $ is closed. 

Given $f\in \sz(\RR^n)$ and $N$ sufficiently large, using  the definition of  $\|\cdot\|_{\SC^{m,\K,\N}_{\rr,\dd}},$ \cite[Lemma 3]{MR2399106} and that $h_\xi$ are bounded uniformly in $\xi$ by \cite[Lemma 1.5.2, p.27]{MR1215939}, we obtain
\begin{align*}
|T_\sigma(f)(x)-T_{\sigma_j}(f)(x)|&=\left|\sum_{k\in \NN_0} (\sigma(x,\lambda_k)-\sigma_j(x,\lambda_k))\PP_k(f)(x)\right|\\
&\le \|\sigma-\sigma_j\|_{\SC^{m,\K,\N}_{\rr,\dd}} \sum_{k\in \NN_0} (1+\sqrt{\lambda_k})^m \sum_{|\xi|=k} |\langle f,h_\xi\rangle| |h_\xi(x)|\\
&\lesssim \|\sigma-\sigma_j\|_{\SC^{m,\K,\N}_{\rr,\dd}} \sum_{\xi\in \NN_0^n} (1+\sqrt{|\xi|})^m \frac{1}{(1+|\xi|)^N}\\
&\lesssim   \|\sigma-\sigma_j\|_{\SC^{m,\K,\N}_{\rr,\dd}},
\end{align*} 
which implies that $T_{\sigma_j}(f)$  converges to $T_{\sigma}(f)$ uniformly in $\RR^n.$ On the other hand, we have
\begin{align*}
\|T_{\sigma_j}(f)-T(f)\|_{A^{\tilde{p},\tilde{q}}_\alpha}\lesssim \|T_{\sigma_j}-T\|_{A^{p,q}_\alpha\to A^{\tilde{p},\tilde{q}}_{\tilde{\alpha}}} \|f\|_{A^{p,q}_\alpha}\to 0.
\end{align*}

The above implies that $T_{\sigma_j}(f)$  converges to $T_{\sigma}(f)$ in $\sz'(\RR^n)$ and  $T_{\sigma_j}(f)$  converges to $T(f)$ in $\sz'(\RR^n)$ for all $f\in \sz(\RR^n)$ (for the latter see  \cite[Proposition 4, p.385 and Section 5, p.392]{MR2399106}, which state that $A^{\tilde{p},\tilde{q}}_\alpha(\LL)\hookrightarrow \sz'(\RR^n)$). Therefore $T_{\sigma}(f)=T(f)$ for all $f\in \sz(\RR^n),$ as desired.
\end{proof}

\end{document}